\newcommand\norm[1]{\left\|#1\right\|}
\def\a{\alpha}
\def\b{\beta}
\def\p{\varphi}
\def\l{\lambda}
\def\g{\gamma}
\def\r{\rho}
\def\k{\kappa}
\def\s{\sigma}
\def\d{\delta}
\def\z{\zeta}
\def\e{\varepsilon}
\def\th{\theta}
\def\P{\Phi}
\def\diag{\mathrm{diag}}
\def\rmd{\mathrm{d}}
\def\NN{\mathbb{N}}
\def\RR{\mathbb{R}}
\def\CC{\mathbb{C}}
\def\DD{\mathbb{D}}
\def\BB{\mathbb{B}}
\def\TT{\mathbb{T}}
\def\FF{\mathbb{F}}
\def\ac{\mathrm{ac}}
\def\rms{\mathrm{s}}
\def\supp{\mathrm{supp\,}}
\def\cH{\mathcal{H}}
\def\klim{K\mbox{-}\lim}
\newtheorem{theorem}{Theorem}[section]
\newtheorem{lemma}[theorem]{Lemma}
\newtheorem{corollary}[theorem]{Corollary}
\newtheorem{proposition}[theorem]{Proposition}
\theoremstyle{definition}
\newtheorem{definition}[theorem]{Definition}
\newtheorem{remark}[theorem]{Remark}
\numberwithin{equation}{section}
\renewcommand{\tilde}{\widetilde}
\renewcommand{\hat}{\widehat}
\renewcommand{\Re}{\mathrm{Re}}
\renewcommand{\Im}{\mathrm{Im}}
 \def\l@subsection{\@tocline{2}{0pt}{4pc}{6pc}{}}
\def\l@subsubsection{\@tocline{3}{0pt}{8pc}{8pc}{}}
\title[Orthogonal Polynomials, Verblunsky Coefficients, \&  a S--V Theorem on \(\partial\BB^d\)]{Orthogonal Polynomials, Verblunsky Coefficients, and a Szeg\H{o}--Verblunsky Theorem on the Unit Sphere in \( \CC^d\)}
\date{\today}
\author[C. J. Gauntlett]{Connor J. Gauntlett}
\address{(CJG) School of Mathematics, Statistics and Physics\\
Newcastle University\\
Newcastle upon Tyne NE1 7RU UK}
\email{c.gauntlett@newcastle.ac.uk}
\author[D. P. Kimsey]{David P. Kimsey}
\address{(DPK) School of Mathematics, Statistics and Physics\\
Newcastle University\\
Newcastle upon Tyne NE1 7RU UK}
\email{david.kimsey@newcastle.ac.uk}
\subjclass[2020]{42C05, 32A10, 32E30}
\keywords{orthogonal polynomials in several variables, Verblunsky coefficients, Schur parameters, Christoffel function, Szeg\H{o}'s theorem}
\begin{document}

\begin{abstract}
Given a measure $\mu$ on the unit sphere \(\partial\mathbb{B}^d\) in \(\mathbb{C}^d\) with Lebesgue decomposition ${\rm d} \mu = w \, {\rm d} \sigma + {\rm d} \mu_s$, with respect to the rotation-invariant Lebesgue measure \(\s\) on $\partial \mathbb{B}^d$, we introduce notions of orthogonal polynomials $(\varphi_{\alpha})_{\alpha \in \mathbb{N}_0^d}$, Verblunsky coefficients $(\gamma_{\alpha,\beta})_{\alpha,\beta \in \mathbb{N}_0^d}$, and an associated Christoffel function $\lambda_{\infty}^{(d)}(z; {\rm d} \mu)$, and we prove a recurrence relation for the orthogonal polynomials involving the Verblunsky coefficients reminiscent of the classical Szeg\H{o} recurrences, as well as an analogue of Verblunsky's theorem. Moreover, we establish a number of equalities involving the orthogonal polynomials, determinants of moment matrices, and the Christoffel function, and show that if ${\rm supp}\, \mu_s$ is discrete, then the aforementioned quantities depend only on the absolutely continuous part of $\mu$. If, in addition to ${\rm supp}\, \mu_s$ being discrete, one is able to find $f \in H^{\infty}(\mathbb{B}^d)$ such that $f(0) = 1$ and
$$
 \int_{\partial \mathbb{B}^d} |f(\zeta)|^2 w(\zeta) {\rm d}\sigma(\zeta) \leq \exp\left( \int_{\partial \mathbb{B}^d} \log\big(w(\zeta)\big) \, {\rm d}\sigma(\zeta) \right),
 $$
then we establish a \(d\)-variate Szeg\H{o}--Verblunsky theorem, namely
$$
\prod_{\a \in \mathbb{N}_0^d} (1 - \lvert \gamma_{0,\alpha} \rvert^2) = \exp\left(\int_{\partial\mathbb{B}^d} \log\big( w(\zeta) \big) \, {\rm d}\sigma(\zeta)\right).
$$
Finally, we identify several classes of weights where one may construct such an \(f\) and highlight an explicit example of a weight $w$, residing outside of these classes, where $\prod_{\a \in \mathbb{N}_0^d} (1 - \lvert \gamma_{0,\alpha} \rvert^2) \neq \exp\left(\int_{\partial\mathbb{B}^d} \log\big( w(\zeta) \big) \, \rmd\sigma(\zeta)\right)$.
\end{abstract}

\maketitle

\tableofcontents

\section{Introduction}

In the literature, the term ``Szeg\H{o}'s theorem'' is often used to refer to a number of different, but related, results, which intertwine various objects including orthogonal polynomials corresponding to a measure on the unit circle, sequences of points in the unit disc (called Verblunsky coefficients or Schur parameters), a quantity derived from the Christoffel function of a measure, and an entropy quantity. In this paper, we shall focus on the version also called the Szeg\H{o}--Verblunsky theorem: if \(\mu\) is a (non-finitely atomic) probability measure on the unit circle \(\TT\) with Radon--Nikodym derivative (with respect to the normalised Lebesgue measure \(\frac{\rmd\th}{2\pi}\)) \(w\) and Verblunsky coefficients \((\g_n)_{n=0}^{\infty} \subseteq \DD\), then
\[
    \prod_{k=0}^{\infty}(1 - \lvert \g_k \rvert^2) = \exp\left(\int_0^{2\pi} \log(w(e^{i\th})) \, \frac{\rmd\th}{2\pi}\right).
\]
As a consequence, one obtains a characterisation of measures with integrable Radon--Nikodym derivative:
\[
    \sum_{k=0}^{\infty} \lvert \g_k \rvert^2 < \infty \quad \text{if and only if} \quad \int_0^{2\pi} \log\big(w(e^{i\th})\big) \, \frac{\rmd\th}{2\pi} > -\infty,
\]
i.e. such measures are precisely those with square-summable Verblunsky coefficients. Note in particular that while \((\g_n)_{n=0}^{\infty}\) are the Verblunsky coefficients of an arbitrary non-finitely atomic measure \(\mu\) --- which thus may have arbitrary singular part --- the \emph{Szeg\H{o} condition} that \(\frac{1}{2\pi}\int_0^{2\pi} \log\big(w(e^{i\th})\big) \, \rmd\th) > -\infty\) depends only upon the Radon--Nikodym derivative, and hence the absolutely continuous part, of \(\mu\).

Another result sometimes called Szeg\H{o}'s theorem is the weak (or first) Szeg\H{o} limit theorem, which says that if \(\mu\) has Fourier coefficients \((c_n)_{n=-\infty}^{\infty}\) and Toeplitz determinants \(D_n := \det T_n\), where for \(n \in \NN_0\), \(T_n\) is the \((n+1)\times(n+1)\) Toeplitz matrix associated to \(\mu\) by
\[
    T_n := \begin{bmatrix}c_0 & c_1 & \cdots & c_{n-1} \\ c_{-1} & c_0 & \cdots & c_{n-2} \\ \vdots & \vdots & \ddots & \vdots \\ c_{-(n-1)} & c_{-(n-2)} & \cdots & c_0\end{bmatrix} = [c_{k-j}]_{j,k=0}^{n};
\]
then
\begin{equation}
    \label{eq:weakSzego}
    \lim_{n\to\infty} (D_n)^{\frac{1}{n}} = \lim_{n\to\infty} \frac{D_n}{D_{n-1}} = \exp\left(\int_0^{2\pi} \log\big(w(e^{i\th})\big) \, \frac{\rmd\th}{2\pi} \right),
\end{equation}
accordingly, this is related to the Szeg\H{o}--Verblunsky theorem via the \emph{Szeg\H{o} entropy}
\[
    \int_0^{2\pi} \log\big(w(e^{i\th})\big) \, \frac{\rmd\th}{2\pi}.
\]
This may be extended to the \emph{strong} (or second) Szeg\H{o} limit theorem (as in e.g. \cite[Chapter 6]{Sim05a}), a result which considers the second-order asymptotics of the Toeplitz determinants of \(\mu\) when the Verblunsky coefficients of \(\mu\) additionally satisfy
\[
    \sum_{k=0}^{\infty} k\lvert \g_k \rvert^2 < \infty.
\]
In this case, \(\mu\) has singular part \(\mu_{\rms} = 0\), and the strong Szeg\H{o} limit theorem says that
\[
    \log D_n = (n+1) \int_{0}^{2\pi} \log\big(w(e^{i\th})\big) \, \frac{\rmd\th}{2\pi} + \sum_{k=1}^{\infty} k \left\lvert \int_0^{2\pi} e^{-im\th}\log\big(w(e^{i\th})\big) \, \frac{\rmd\th}{2\pi} \right\rvert^2 + o(1).
\]
The strong Szeg\H{o} limit theorem can also be generalised to asymptotics for Toeplitz determinants which have as symbols complex-valued functions, rather than real-valued measures, see, e.g., \cite{Hir66} and \cite{Kac54} and the sharp result \cite{GI71}. 

The Szeg\H{o}--Verblunsky theorem is one of the main results studied in the book \cite{Sim05a}, where four different proofs are provided. Simon goes on to extend the Szeg\H{o}--Verblunsky and weak Szeg\H{o} limit theorems into the following result, \cite[Theorem 2.7.14]{Sim05a}, collecting together many equivalent forms for what we shall call the \emph{Szeg\H{o} quantity} \(\prod_{k=0}^{\infty}(1-\lvert\g_k\rvert^2)\).

Let \(\mu\) be a non-finitely atomic measure on the unit circle \(\TT\) with Lebesgue decomposition with respect to the normalised Lebesgue measure \(\rmd\mu(\th) = w(\th)\frac{\rmd \th}{2\pi} + \rmd \mu_s \). Let \(\mu\) have associated Verblunsky coefficients \((\g_n)_{n=0}^{\infty}\), monic orthogonal polynomials \((\Phi_n)_{n=0}^{\infty}\), orthonormal polynomials \((\p_n)_{n=0}^{\infty}\), Toeplitz determinants \((D_n)_{n=0}^{\infty}\), Szeg\H{o} function \(D(z)\), Christoffel function \(\l_\infty(z; \rmd\mu)\), and Schur function \(f(z)\); let \(\k_n\) be the coefficient of \(z^n\) in \(\p_n\). For a polynomial \(p : \TT \to \CC\), denote its reverse polynomial by \(p^*(z) := z^{\rm{deg}(p)}\overline{p(1/\overline{z})}\). 

\begin{theorem}
    \label{thm:SummaryThm}
    The following quantities are all equal.
    \begin{enumerate}
        \item[\rm{(i)}] \(\exp(\int_{0}^{2\pi}\log(w(\th)) \; \frac{\rmd \th}{2\pi})\);
        \item[\rm{(ii)}] \(\lim_{n\to \infty} \norm{\P_n}^2 = \lim_{n\to\infty} \norm{\P_n^*}^2\);
        \item[\rm{(iii)}] \(\lim_{n\to\infty} \k_n^{-2}\);
        \item[\rm{(iv)}] \(\prod_{k=0}^\infty (1 - \lvert \g_k \rvert^2)\);
        \item[\rm{(v)}] \(\exp(\int_0^{2\pi} \log\left(1 - \lvert f(e^{i \th})\rvert^2\right) \frac{\rmd \th}{2\pi})\);
        \item[\rm{(vi)}] \(\lim_{n\to\infty} \frac{D_{n+1}}{D_n} = \lim_{n\to\infty} \sqrt[n]{D_n}\);
        \item[\rm{(vii)}] \(\l_\infty(0; \rmd\mu)\);
        \item[\rm{(viii)}] \(1 - \norm{Q_+ z^{-1}}^2, \text{ where \(Q_+\) is the projection onto \(\mathrm{span}\{z^j\}_{j=0}^\infty\)}\);
        \item[\rm{(ix)}] \(\lvert D(0) \rvert^2\);
        \item[\rm{(x)}] \(\lim_{n\to\infty} \lvert \p_n^*(0) \rvert^{-2}\);
        \item[\rm{(xi)}] \(\lim_{n\to\infty} \left(\sum_{j=0}^n \lvert \p_j(0) \rvert^2\right)^{-1}\);
        \item[\rm{(xii)}] The relative entropy \(S(\frac{\rmd \th}{2\pi} \; \vert \; \rmd \mu)\).
    \end{enumerate}
\end{theorem}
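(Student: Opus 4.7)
This is essentially the compendium Theorem 2.7.14 of \cite{Sim05a}, and my plan is to follow Simon's overall strategy: split the twelve items into a handful of natural clusters, establish the (largely algebraic) equivalences within each cluster from the basic machinery of orthogonal polynomials on the unit circle, and then connect the clusters through the Szeg\H{o}--Verblunsky identity (i) = (iv), which is the only genuinely deep ingredient.

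The first cluster, namely (ii), (iii), (iv), (vi), (vii), (x), and (xi), I would handle using the Szeg\H{o} recurrence $\P_{n+1}(z) = z\P_n(z) - \overline{\g_n}\P_n^*(z)$. Orthogonality of $\P_n^*$ to $z\P_n$ in $L^2(\rmd\mu)$ yields $\norm{\P_{n+1}}^2 = (1-|\g_n|^2)\norm{\P_n}^2$, hence $\norm{\P_n}^2 = \prod_{k=0}^{n-1}(1-|\g_k|^2)$; the same quantity equals $\k_n^{-2}$ via $\P_n = \k_n^{-1}\p_n$, and equals $D_n/D_{n-1}$ by the standard Gram determinant identity. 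The Stolz--Ces\`aro lemma turns the ratio $D_{n+1}/D_n$ into $\sqrt[n]{D_n}$. The reverse-polynomial identity $\p_n^*(0) = \k_n$ links (iii) with (x), while the Christoffel--Darboux expression $\l_n(0;\rmd\mu)^{-1} = \sum_{j=0}^n |\p_j(0)|^2$ together with monotone convergence links (vii) with (xi). Finally, the Szeg\H{o}-function equality (ix) = (i) is immediate from the Poisson-integral formula $D(0) = \exp\bigl(\frac{1}{4\pi}\int_0^{2\pi}\log w(\th) \, \rmd\th\bigr)$, upon squaring.

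The core of the theorem, and the main obstacle, is the Szeg\H{o}--Verblunsky identity (i) = (iv), which I would establish through the variational characterisation (this also yields (viii) en route). Extremality of the monic orthogonal polynomials reads $\norm{\P_n^*}^2 = \min\{\norm{P}_{L^2(\rmd\mu)}^2 : \deg P \leq n,\, P(0) = 1\}$; passing to the limit in $n$ gives
\[
    \prod_{k=0}^{\infty}(1-|\g_k|^2) = \inf\bigl\{\norm{P}_{L^2(\rmd\mu)}^2 : P \text{ analytic polynomial with } P(0) = 1\bigr\},
\]
which after the standard Hilbert-space rewriting $\text{dist}^2(z^{-1}, \overline{\mathrm{span}\{z^j\}_{j\geq 0}}) = \norm{z^{-1}}^2 - \norm{Q_+ z^{-1}}^2 = 1 - \norm{Q_+ z^{-1}}^2$ in $L^2(\rmd\mu)$ is precisely (viii). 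The lower bound $\inf \geq \exp(\int\log w\,\rmd\th/(2\pi))$ then follows from discarding $\mu_s$ and invoking Jensen's inequality in the form $\log|P(0)|^2 \leq \int_0^{2\pi} \log|P(e^{i\th})|^2 \, \rmd\th/(2\pi)$; the matching upper bound is obtained by approximating $D(0)/D$ in $H^2(\DD)$ by polynomials $P_n$ with $P_n(0) = 1$, yielding $\norm{P_n}^2_{L^2(w\rmd\th/(2\pi))} \to |D(0)|^2 \int_0^{2\pi}\bigl(w/|D|^2\bigr) \, \rmd\th/(2\pi) = |D(0)|^2$.

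The remaining two items are brief. For (v), I would combine Geronimus' theorem --- which identifies the Verblunsky coefficients of $\mu$ with the Schur parameters of its Schur function $f$ and so reduces the telescoping product to the same infinite product $\prod(1-|\g_k|^2)$ --- with the boundary relation $|f(e^{i\th})|^2 = 1 - w(\th)/|D(e^{i\th})|^2$ holding a.e.\ on $\TT$; taking logarithms, integrating, and exponentiating then matches (v) with (i). For (xii), I would unpack the definition of the Kullback--Leibler functional $S(\rmd\th/(2\pi)\,|\,\rmd\mu)$, which equals $\int_0^{2\pi}\log w(\th)\,\rmd\th/(2\pi)$ when the Szeg\H{o} condition holds and $-\infty$ otherwise, matching the behaviour of (i) in both regimes. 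The only genuinely deep step throughout is the Jensen/outer-function pincer that pins down the variational infimum in the preceding paragraph; everything else reduces to the recurrence, the Gram identity, or a direct unpacking of definitions.
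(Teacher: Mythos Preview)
The paper does not supply a proof of this theorem: it is quoted verbatim as \cite[Theorem~2.7.14]{Sim05a} and serves purely as background for the multivariate results that follow. There is therefore no ``paper's own proof'' to compare against; your plan is essentially a compressed outline of Simon's arguments, which is exactly the source being cited, so in that sense your approach and the paper's are identical by reference.

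Your sketch is broadly sound, but one step is wrong as written. In your treatment of (v) you invoke the boundary relation $|f(e^{i\th})|^2 = 1 - w(\th)/|D(e^{i\th})|^2$. Since the Szeg\H{o} function satisfies $|D(e^{i\th})|^2 = w(\th)$ a.e., your formula would force $f \equiv 0$ on $\TT$. The correct identity comes from $\Re F = w$ a.e.\ with $F = (1+zf)/(1-zf)$, giving
\[
    1 - |f(e^{i\th})|^2 = w(\th)\,\bigl|1 - e^{i\th}f(e^{i\th})\bigr|^2;
\]
then $\int_0^{2\pi}\log|1 - e^{i\th}f(e^{i\th})|^2\,\tfrac{\rmd\th}{2\pi} = 0$ by the mean value property (the function $1 - zf(z)$ is holomorphic, non-vanishing on $\DD$, and equals $1$ at the origin), which yields (i) = (v). The remaining clusters --- the recurrence/Gram-determinant block, the variational/Jensen pincer for (i) = (iv) = (vii) = (viii), the Poisson formula for (ix), and the relative-entropy unpacking for (xii) --- are handled correctly in outline.
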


Notice in particular the equalities (i) = (iv) and (i) = (vi), which are the Szeg\H{o}--Verblunsky and weak Szeg\H{o} limit theorems respectively. We defer to \cite{Sim05a} for an extensive treatment of orthogonal polynomials on the unit circle and a comparison of this theory with that of orthogonal polynomials on the real line, and to \cite{Sim05b} for an expansion upon this comparison.

Classically, the Verblunsky coefficients of a measure \(\mu\) arise in two ways, unified by Geronimus' theorem: firstly, if \(F : \DD \to \CC\) is the Herglotz transform of \(\mu\) given by
\[
    F(z) = \int_0^{2\pi} \frac{1 + ze^{-i\th}}{1 - ze^{-i\th}} \, \rmd\mu(e^{i\th})
\]
and \(f : \DD \to \overline{\DD}\) is the Schur function given by 
\[
    F(z) = \frac{1 + f(z)}{1 - f(z)},
\]
then the \emph{Schur parameters} \((\g_n)_{n=0}^{\infty}\) of \(\mu\) are the outputs of the Schur algorithm beginning with \(f_0 = f\). On the other hand, if \((\p_n)_{n=0}^{\infty}\) are the orthonormal polynomials associated to \(\mu\) via Gram--Schmidt on \(\{1, z, z^2, \ldots\}\) with respect to the inner product
\[
    \langle p, q \rangle_\mu := \int_0^{2\pi} p(e^{i\th}) \overline{q(e^{i\th})} \, \rmd\mu(e^{i\th}),
\]
then the \emph{Verblunsky coefficients} of \(\mu\) can be viewed as the coefficients \((\a_n)_{n=0}^{\infty}\) appearing in the Szeg\H{o} recurrences
\[
    \p_{n+1}(z) = \frac{z\p_n(z) - \overline{\a_n} \p_n^{\#}(z)}{\sqrt{1 - \lvert \a_n \rvert^2}}
\]
and
\[
    \p_{n+1}^{\#}(z) = \frac{\p_n^{\#}(z) - \a_n z \p_n(z)}{\sqrt{1 - \lvert \a_n \rvert^2}},
\]
where for a polynomial \(p\) we denote by \(p^{\#}\) the \emph{reverse polynomial} given by
\[
    p^{\#}(z) := z^{\mathrm{deg}(p)}\overline{p(1/\overline{z})}.
\]
Geronimus' theorem \cite[Theorem 3.1.4]{Sim05a} states that \(\a_n = \g_n\) for all \(n\), and so we use the terminology ``Verblunsky coefficients'' to refer to either in accordance with the vocabulary of \cite{Sim05a}.

As Simon discusses in \cite{Sim05a} and \cite{Sim05b}, much of this theory has an analogue for measures on (a compact interval of) the real line. One useful tool for establishing a link between the the unit circle and real line settings is the {\it Szeg\H{o} mapping} (see Section 13.1 in \cite{Sim05b}), which establishes a bijective correspondence between probability measures on the unit circle and probability measures on the interval $[-2,2]$. In this setting, when \(\nu\) is a probability measure on \([-2,2]\), the orthonormal polynomials \((p_n)_{n=0}^{\infty}\) of \(\nu\) obey three term recurrences involving particular coefficients \((a_n)_{n=0}^{\infty}\), \((b_n)_{n=0}^{\infty}\) (which appear in the \emph{Jacobi matrix}):
\begin{equation}
    \label{eq:JacobiRecurrence}
    xp_n(x) = a_{n+1} p_{n+1}(x) + b_{n+1} p_n(x) + a_n p_{n-1}(x).
\end{equation}
In the real setting, Toeplitz matrices are replaced by Hankel matrices, i.e. matrices of the form $[s_{j+k}]_{j,k=0}^n$, and if \(\nu\) has Lebesgue decomposition $\rmd\nu = f(x) \rmd x + \rmd\nu_{\rms}$ then the Szeg\H{o}--Verblunsky theorem takes the form
\begin{equation}
    \label{eq:OPRLSzego}
    \lim_{n \to \infty} \prod_{j=1}^n a_j = \sqrt{2} \exp\left(\int_0^{2\pi} \log \big( 2\pi \sin(\theta) f(2 \cos(\theta)) \big) \, \frac{\rmd\th}{2\pi} \right).
\end{equation}

Moving to several variables, if one replaces the unit disc by the unit ball \(\BB^d\) or polydisc \(\DD^d\) in \(\CC^d\), or (a compact interval in) the real line by (a compact subset of) $\RR^d$, then generalising these ideas becomes delicate. For a detailed and authoritative treatment of the theory of orthogonal polynomials on $\RR^d$, see \cite{DX01}. In particular, it is worth pointing out that in the several variable setting \eqref{eq:JacobiRecurrence} has a generalisation where the Jacobi coefficients are replaced by matrix-valued coefficients, see Section 3.2 in \cite{DX01} for details. 

In the ground-breaking work \cite{Oki96}, Okikiolu discovered a weak and strong Szeg\H{o} limit theorem for continuous functions on the real unit sphere in $\RR^3$ and $\RR^4$ with the additional hypotheses that the function belongs to a fractional Sobolev space and the closed convex hull of the image of the function does not contain the origin. 

Meanwhile in the complex multivariate setting, study of the Szeg\H{o}--Verblunsky theorem and weak Szeg\H{o} limit theorem has been done primarily on the polytorus, i.e. replacing the unit disc \(\DD\) and circle \(\TT\) by, respectively, the polydisc $\DD^d := \{ (z_1, \ldots, z_d): \text{$|z_j| < 1$ for $j=1,\ldots, d$}$ and polytorus $\TT^d := \{ (\zeta_1, \ldots, \zeta_d): \text{$|\zeta_j| = 1$ for $j=1,\ldots, d$} \}$: this goes back at least to the seminal works of Helson and Lowdenslager \cite{HL58} and \cite{HL61}. An abstract Szeg\H{o} theorem in terms of an analogue of a Christoffel function --- specifically, a result analogous to the equality (i) = (vii) of \Cref{thm:SummaryThm} in far more generality --- appeared in Browder \cite{Bro69}. To elaborate: let $X$ be a compact Hausdorff space and $C(X)$ denote the algebra of continuous functions \(X \to \CC\) equipped with the supremum norm $\| \cdot \|_{\infty}$. We say that $\mathfrak{A} \subseteq C(X)$ is a {\it sup-norm algebra} if $\mathfrak{A}$ is a $\| \cdot \|_{\infty}$-closed complex subalgebra, $1 \in \mathfrak{A}$ and $\mathfrak{A}$ separates points of $X$. Let $\hat{\mathfrak{A}}$ denote the set of multiplicative linear functions on $\mathfrak{A}$ in the weak-* topology. We say that $\varphi \in \hat{\mathfrak{A}}$ has a {\it unique representing measure}, say $\mu_{\varphi}$, if $\mu_{\varphi}$ is a probability measure on $X$ with the property that
\begin{equation}
    \label{eq:unique}
    \varphi(f) = \int_X f(x) \, {\rm d}\mu_{\varphi}(x) \quad {\rm for} \quad f \in \mathfrak{A}.
\end{equation}
Browder \cite{Bro69} showed that for any sup-norm algebra $\mathfrak{A} \subseteq C(X)$ and \(\p \in \hat{\mathfrak{A}}\) with a unique representing measure, an abstract Szeg\H{o} theorem holds which we will now formulate. 
\begin{theorem}
\label{thm:Browder}
    Suppose $X$ is a compact Hausdorff space. Let \(\mathfrak{A} \subseteq C(X)\) be a a sup-norm algebra. Suppose \(\p \in \hat{\mathfrak{A}}\) has a unique representing measure \(\rmd\mu_\p\) on \(X\). Then for any measure \(\rmd\nu\) on \(X\) with Lebesgue decomposition \(\rmd\nu = w \rmd\mu_{\p} + \rmd\nu_\rms\) with \(\nu_\rms\) singular with respect to \(\mu_\p\), we have for any \(p \in (0,\infty)\) that
    \[
        \inf\left(\int_X \lvert P(x) \rvert^p \, \rmd \nu(x) : P \in \mathfrak{A}, \; {\rm and} \; \p(P) = 1 \right) = \exp\left(\int_X \log w(x) \, \rmd\mu_\p(x)\right).
    \]
\end{theorem}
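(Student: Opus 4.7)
The plan is to establish the claimed identity as two separate inequalities, $\geq$ and $\leq$.

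For the lower bound, the key structural observation is that because $\mu_\p$ is the \emph{unique} representing measure for $\p$, it must automatically be a Jensen measure, i.e.
\[
    \log\lvert\p(P)\rvert \leq \int_X \log\lvert P(x)\rvert\, \rmd\mu_\p(x) \qquad \text{for every } P \in \mathfrak{A}.
\]
This is a classical consequence of uniqueness in the theory of function algebras: a Jensen measure for $\p$ always exists (by a Hahn--Banach/duality argument), is automatically a representing measure, and so must coincide with $\mu_\p$. Granting this, for any $P \in \mathfrak{A}$ with $\p(P)=1$ we compute
\[
    \int_X \lvert P\rvert^p \, \rmd\nu \geq \int_X \lvert P\rvert^p w \, \rmd\mu_\p \geq \exp\left(p\int_X \log\lvert P\rvert\, \rmd\mu_\p + \int_X \log w \, \rmd\mu_\p\right) \geq \exp\left(\int_X \log w \, \rmd\mu_\p\right),
\]
the middle step being the scalar Jensen (arithmetic--geometric mean) inequality applied to the probability measure $\mu_\p$, and the last step using the Jensen measure property together with $\lvert\p(P)\rvert = 1$.

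For the upper bound, I would construct a sequence $(P_n) \subseteq \mathfrak{A}$ with $\p(P_n)=1$ whose $L^p(\nu)$-norms converge to the right-hand side, proceeding in two stages. First, one shows that the infimum, computed only against the absolutely continuous component $w\,\rmd\mu_\p$, already equals the target; this is an abstract Hardy-space result of Helson--Lowdenslager type, proved by a duality/approximation argument exploiting density of $\{P \in \mathfrak{A}: \p(P)=0\}$ in a suitable codimension-one subspace of $L^p(w\,\rmd\mu_\p)$ (playing the role of the classical Szeg\H{o} outer function of $w$). Second, since $\nu_\rms \perp \mu_\p$, one modifies these near-extremisers by multiplying through by elements of $\mathfrak{A}$ fixed at $\p$, close to $1$ in $L^p(w\,\rmd\mu_\p)$, and small $\lvert\nu_\rms\rvert$-almost everywhere, so that the $\nu_\rms$-contribution vanishes in the limit.

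The main obstacle is this second stage: in the classical disc-algebra case it would be handled by explicit outer/inner function constructions, but in an abstract sup-norm algebra one must instead exploit the uniqueness of $\mu_\p$ to produce the required ``avoidance'' functions. The existence of elements of $\mathfrak{A}$ concentrating their $L^p(\mu_\p)$-mass away from a prescribed $\mu_\p$-null set while preserving the value at $\p$ is a nontrivial consequence of uniqueness of the representing measure, and constitutes the technical heart of the argument.
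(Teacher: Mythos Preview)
The paper does not supply its own proof of this theorem: it is stated as a result of Browder \cite{Bro69} and cited without proof (see the sentence preceding the theorem statement). There is therefore nothing in the paper to compare your argument against directly.

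That said, your outline is essentially the standard proof one finds in the function-algebra literature (e.g.\ Browder's own book, or Gamelin's \emph{Uniform Algebras}). The lower bound is complete as written: uniqueness of the representing measure does force $\mu_\p$ to be a Jensen measure, and your chain of inequalities is correct. For the upper bound, you have correctly identified both the structure of the argument and where the real work lies. The first stage (the Helson--Lowdenslager-type step against $w\,\rmd\mu_\p$ alone) is indeed the easier half, typically done by approximating $\log w$ from below by bounded functions and exponentiating inside the algebra. The second stage --- producing elements of $\mathfrak{A}$ that are close to $1$ in $L^p(\mu_\p)$ but small on a $\mu_\p$-null carrier of $\nu_\rms$ --- is, as you say, the crux; in Browder's treatment this is handled via the ``generalised F.\ and M.\ Riesz'' machinery that flows from uniqueness of $\mu_\p$. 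Your sketch stops short of actually carrying this out, so as a proof it is incomplete, but as a plan it is sound and follows the classical route.
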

The expression on the left hand side is directly analogous to an \(L^p\)-generalisation of the Christoffel function associated to a measure on \(\TT\), and hence as mentioned above this result is a very general analogy of \Cref{thm:SummaryThm}, (i) = (vii). We refer to the discussion surrounding \cite[Theorem 2.5.5]{Sim05a} for further details.

More recently, work in the polytorus setting has admitted Szeg\H{o} theorems in different guises. The paper \cite{Gib23} introduces a \(\NN\)-indexed, \(\CC\)-valued notion of Verblunsky coefficients that may be associated to a measure on the polytorus $\TT^d$ and obtains a corresponding Szeg\H{o}--Verblunsky theorem, and in \cite{GLZ24} a weak Szeg\H{o} limit theorem was obtained on the polytorus taking the perspective of studying determinants of Toeplitz operators with positive symbols in \(L^1(\TT^d)\) bounded away from 0.

On the unit sphere \(\partial\BB^d\) in \(\CC^d\), less is known, though we remark that one may re-interpret the \(\RR^4\) setting of \cite{Oki96} as giving Szeg\H{o} limit theorems on the unit ball in \(\CC^2\).

The notion of a Schur algorithm on the unit ball has been studied depending on the application, for example in \cite{ABK02} from the perspective of interpolation problems; note that in this context the outputs of the algorithm are vector-valued, lying in the unit ball rather than the unit disc. In the book \cite{Con96}, Constantinescu presents an alternative to the Schur algorithm using deep and general matrix theory: if \((\cH_n)_{n=0}^{\infty}\) is a sequence of Hilbert spaces and \(K\) is a kernel with \(K(n,m) \in B(\cH_n,\cH_m)\) for \(n,m \in \NN_0\), then \cite[Theorem 1.5.3]{Con96} constructs a family of Verblunsky coefficients associated to \(K\) via Douglas' lemma. Note that by Theorem 1.6.7 of the same source and Geronimus' theorem, these Verblunsky coefficients recover the usual notion when \(\cH_n = \CC\) for all \(n\).

The setting of \cite{Con96} is incredibly general, and as such these Verblunsky coefficients may be applied widely. Success has already been found in this paradigm in the free noncommutative setting: in 2002, Constantinescu and Johnson \cite{CJ02b} associated Verblunsky coefficients and orthogonal polynomials to a \emph{multi-Toeplitz kernel} on the free monoid on \(d\) generators, and the present authors built on this in \cite{GK25} by linking such kernels to the noncommutative measures of Jury and Martin (see e.g. \cite{JM22a} and references therein) to establish an analogue of many items of \Cref{thm:SummaryThm} with \(\mu\) replaced by a noncommutative measure. In particular, the dependence of the square-summability of the Verblunsky coefficients upon only the absolutely continuous part of \(\mu\) is retained in this setting. In the sequel, we shall apply similar methods to the setting of measures on the unit sphere \(\partial\BB^d\) in \(\CC^d\) and again obtain an analogue of \Cref{thm:SummaryThm}. Moreover --- further than what was obtained in the free noncommutative setting --- we shall prove a full Szeg\H{o}--Verblunsky theorem under particular assumptions on the measure, construct classes of examples satisfying these assumptions, and highlight a counterexample to the Szeg\H{o}--Verblunsky theorem in several variables when these assumptions are not met.

\subsection{Main Conclusions and Paper Structure}

Below are our main conclusions.

\begin{enumerate}
    \item[(C1)] Given a probability measure \(\mu\) on \(\partial\BB^d\), we construct a multi-sequence of orthogonal polynomials on \(\partial\BB^d\), denoted by $(\varphi_{\alpha})_{\alpha \in \NN_0^d}$, and an analogue of Verblunsky coefficients (i.e., a multisequence of points in $\overline{\DD}$) denoted by $(\gamma_{\alpha, \beta})_{\alpha, \beta \in \NN_0^d}$ which are intertwined by a Szeg\H{o}-type recurrence relation (see \Cref{def:VCs}, \Cref{def:OPs} and \Cref{thm:Recurrences}, respectively).

    \begin{remark}
        The reader should note that the polynomials $(\varphi_{\alpha}^{\#} )_{\alpha \in \NN_0^d}$, i.e., the {\it sharp polynomials}, appearing in \Cref{thm:Recurrences} need not agree with the usual understanding when $d=1$, see \Cref{rem:SharpPolys} for more details.
    \end{remark}

    \item[(C2)] We introduce a Christoffel function associated to a probability measure on \(\partial\BB^d\) and prove an infimum formula for this function (see \Cref{lem:CFExists}).

    \item[(C3)] We establish a multivariate analogue of \Cref{thm:SummaryThm} for a probability measure on \(\partial\BB^d\), though in the multivariate setting, this list becomes two, with the first providing an analogue of the weak Szeg\H{o} limit theorem and the second providing an analogue of the Szeg\H{o}--Verblunsky theorem (see \Cref{thm:MainResult}, (iii) = (iv) and (v) = (viii)).

    \begin{remark}
    \label{rem:NOTAPPLICABLE}
    We note for the convenience of the reader that no $z \in \BB^d$, when $d > 1$, has a unique representing measure (see \cite[p. 186]{Rud87}) and hence Theorem \ref{thm:Browder} is not applicable to the main setting of this paper.
    \end{remark}

    \item[(C4)] When the singular part of a probability measure \(\mu\) on \(\BB^d\) is discrete, the Christoffel function associated to \(\mu\) --- and hence the Szeg\H{o} quantity \(\prod_{\a \in \NN_0^d} (1 - \lvert \a \rvert^2)\) --- depends only upon the absolutely continuous part of \(\mu\) (see \Cref{thm:AbsolutelyContinuousDeterminesEntropy} and \Cref{thm:MainResult}, (v) = (viii)').

    \item[(C5)] In addition to the assumption of (C4), let \(\mu\) have Lebesgue decomposition \(\rmd\mu(\z) = w(\z)\rmd\s(\z) + \rmd\mu_{\rms}(\z)\). We show that when there exists \(f \in H^{\infty}(\BB^d)\) with \(f(0) = 1\) and such that
    \[
        \int_{\partial\BB^d} |f(\z)|^2 w(\z) \, \rmd\s(\z) \leq \exp\left(\int_{\partial\BB^d} \log\big( w(\z) \big) \, \rmd\s(\z)\right)
    \]
    then we have a full analogue of the Szeg\H{o}--Verblunsky theorem, i.e. an analogue of the equality (i) = (v) of \Cref{thm:SummaryThm} (see \Cref{thm:MainResult}, (v) = (x)).

    \item[(C6)] We study a number of classes of functions giving rise to some \(w \in L^1(\partial\BB^d)\) where the assumption of (C5) is satisfied (e.g., $w = |g|^2$, where $g, 1/g \in H^{\infty}(\BB^d)$ and $\| g \|_{\infty} \leq 1$) and hence the Szeg\H{o}--Verblunsky theorem holds (see \Cref{thm:examplesinfty}, \Cref{thm:examplesp}, \Cref{thm:examplesinftyapprox} and \Cref{rem:FurtherExamples}). We also provide a example when the equality (x) = (v) does not hold when $w = |g|^2$ and $g$ does not belong to one of the aforementioned classes in (C5) (see \Cref{thm:counterexample}).

\end{enumerate}

\medskip

The paper is structured as follows.

\medskip

The remainder of the introduction is dedicated to fixing our notation for  the indexing set \(\NN_0^d\) and some common spaces of functions on \(\BB^d\) and \(\partial\BB^d\). In Section 2 we discuss the relevant parts of the kernel theory of \cite{Con96} and apply this theory to introduce Verblunsky coefficients associated to a probability measure on \(\partial\BB^d\). These ideas are intrinsically linked to those of Section 3, where we establish (C1). In Section 4, we discuss the Christoffel function of a measure and show (C2). Sections 5 and 6 are devoted to proving (C3), (C4) and (C5). Section 7 returns to the assumptions on the absolutely continuous part of the measure from Section 5 and discusses (C6) in detail.

\subsection{Multi-indices}

In our multivariate setting we shall often index families of objects by elements of \(\NN_0^d\); we call such elements \emph{multi-indices}. The \emph{length} of a multi-index \(\a = (\a_1, \ldots, \a_d) \in \NN_0^d\) is given by
\[
    \lvert \a \rvert = \a_1 + \cdots + \a_d;
\]
we shall also use the notation
\[
    \a! = \a_1! \cdot \cdots \cdot \a_d!.
\]
The set \(\NN_0^d\) is a commutative monoid under vector addition with identity \(0 = (0, \ldots, 0)\) and generators \(e_j\), where $e_j$ denotes the $d$-tuple of zeros with a 1 in the $j^{\mathrm{th}}$ position for \(j = 1, \ldots, d\).

We order \(\NN_0^d\) via the \emph{short lexicographical} (\emph{shortlex}) ordering, that is, the ordering on \(\NN_0^d\) such that \(\a \prec \b\) if \(\lvert \a \rvert < \lvert \b \rvert\) and when \(\lvert \a \rvert = \lvert \b \rvert\) we order lexicographically such that \(e_1 \prec e_2 \prec \ldots \prec e_d\). For example, when \(d = 2\), this ordering is
\[
    \NN_0^2 = \{(0,0), (1,0), (0,1), (2,0), (1,1), (0,2), \ldots\}.
\]

We shall use \(\mathrm{prec}(\a)\) and \(\mathrm{succ}(\a)\) to refer to the immediate predecessor and successor, respectively, of \(\a\) according to this ordering, and for \(n \in \NN_0\) we denote by \(\a(n)\) be the last element of \(\NN_0^d\) of length \(n\) under the shortlex ordering, i.e. \(\a(n) = (0, \ldots, 0, n) = n e_d\).

Let \(\a = (\a_1, \ldots, \a_d) \in \NN_0^d\) and \(z = (z_1, \ldots, z_d) \in \CC^d\). We define the monomial \(z^\a\) to be
\[
    z^\a := z_1^{\a_1}\cdot \cdots \cdot z_d^{\a_d}.
\]
The short lexicographical ordering on \(\NN_0^d\) induces an ordering on the set of monomials \(\{z^{\a} : \a \in \NN_0^d\}\), given by \(z^{\a} \prec z^{\b}\) if and only if \(\a \prec \b\); we also call this the \emph{short lexicographical ordering}.

We define the \emph{multi-degree} of a \(d\)-variate polynomial \(p\), written as
\[
    p(z) = \sum_{\a \in \NN_0^d} c_\a z^{\a}
\]
with \((c_{\a})_{\a\in\NN_0^d}\) eventually zero, to be the shortlex-highest \(\a \in \NN_0^d\) such that \(c_{\a} \neq 0\).

\subsection{Function Spaces}

We often refer to a number of standard sets and function spaces; to aid the reader, we state our notation here.

\begin{itemize}
    \item The open unit ball in \(\CC^d\) is \(\BB^d\) with boundary the unit sphere \(\partial\BB^d\). When \(d = 1\) we write \(\DD\) in place of \(\BB^1\) and \(\TT\) in place of \(\partial\BB^1\).
    \item \(\RR[x_1, \ldots, x_d]\) consists of polynomials in \(d\) (real) variables with real coefficients.
    \item \(\CC[z_1, \ldots, z_d]\) is the space of polynomials in \(d\) variables with complex coefficients.
    \item For \(\a \in \NN_0^d\), \(\CC[z_1, \ldots, z_d]_{\a}\) is the space \(\mathrm{span}\{z^{\b}\}_{0\preceq \b \preceq \a}\) of polynomials in \(d\) variables of multi-degree at most \(\a\).
    \item The space \(C(\partial\BB^d)\) is the space of all continuous functions \(\BB^d \to \CC\).
    \item Given a measure \(\mu\) on \(\partial\BB^d\) and \(1 \leq p < \infty\), the space \(L^p(\mu)\) is the space of measurable functions \(f : \partial\BB^d \to \CC\) such that
    \[
        \int_{\partial\BB^d} \lvert f(\z) \rvert^p \, \rmd\mu(\z) < \infty.
    \]
    When \(\mu = \s\) is the normalised rotation-invariant Lebesgue measure on \(\partial\BB^d\), we write \(L^p(\mu) = L^p(\partial\BB^d)\).
    \item Given \(1 \leq p < \infty\), we let \(H^p(\BB^d)\) be the space of holomorphic \(f : \BB^d \to \CC\) such that
    \[
        \sup_{0 < r < 1} \int_{\partial\BB^d} \lvert f(r\z) \rvert^p \, \rmd \s(\z) < \infty
    \]
    where \(\s\) is the normalised rotation-invariant Lebesgue measure on \(\partial\BB^d\).
    \item When \(p = \infty\) we let \(H^{\infty}(\BB^d)\) be the space of bounded holomorphic functions \(\BB^d\to\CC\).
\end{itemize}

\section{Kernels and Verblunsky Coefficients}

When \(d = 1\), we have the usual relationship between positive Toeplitz kernels and positive measures, and furthermore, by definition,
\[
    \mu(\TT) = \int_{\TT} 1 \, \rmd \mu = \int_0^{2\pi} e^{ik\th} e^{-ik\th} \, \rmd\mu(e^{i\th}) = K(k,k)
\]
for any \(k\in\NN_0\). It follows that not only is the kernel associated to a measure on \(\TT\) constant along the diagonal, one also has that probability measures correspond precisely to \emph{normalised} positive kernels, in the sense that \(K(n,n) = 1\) for all \(n\in\NN_0\).

For \(d > 1\), a similar fact for noncommutative measures (in the sense of \cite{JM22a}) was shown in \cite{GK25}, namely that nc measures correspond one-to-one with \emph{multi-Toeplitz} kernels. Moreover, the analogy of \(\mu(\TT)\) in that setting is \(\mu(1)\) for an nc measure \(\mu\) (and the \emph{constant nc function} 1 given by \(1(Z) = I_{k\times k}\) for \(Z \in (\CC^{k\times k})^d\)), and we see that
\[
    \mu(1) = \mu\big((L^{\s})^{\ast}L^{\s}\big) = K(\s,\s)
\]
for any \(\s \in \FF_d^+\). Hence multi-Toeplitz kernels are similarly constant along the ``diagonal". This may also be seen from the axioms defining a multi-Toeplitz kernel; we take the above perspective to invite the comparison to the univariate theory. Thus we have a bijective relationship also between probability nc measures (\(\mu(1) = 1\)) and normalised multi-Toeplitz kernels.

In this paper we utilise the analogues of these ideas for the commutative multivariate setting, where we again would like to study kernels corresponding to positive measures, now on the boundary \(\partial\BB^d\) of the unit ball of \(\CC^d\). The desired relationship would be a one-to-one correspondence between such measures \(\mu\) and some class of kernels \(K\) on \(\NN_0^d\) via the moments of the measure, i.e. via the formula
\[
    K(\a, \b) = \int_{\partial\BB^d} \z^{\a} \overline{\z}^{\b} \, \rmd\mu(\z).
\]

\begin{remark}
    \label{rem:multiToeplitz}
    It is natural, beginning from the perspective of adapting the noncommutative theory of \cite{CJ02b} and \cite{GK25}, to first consider positive kernels satisfying a commutative version of the multi-Toeplitz axioms:
    \[
        K(\a + \a', \b + \a') = K(\a,\b), \quad \a,\b,\a' \in \NN_0^d,
    \]
    \[
        K(\a, \b) = 0 \quad \text{ if there is no \(\a' \in \NN_0^d\) such that \(\a = \b + \a'\) or \(\b = \a + \a'\).}
    \]
    One quickly observes that this is highly restrictive. To illustrate this, consider the case \(d = 2\) and let \(\a = (1,0), \b = (0,1)\): since there is no \(\a' \in \NN_0^d\) such that \((1,0) = (0,1) + \a'\) or vice versa, the second of the above equations forces \(K(\a,\b) = 0\) for any kernel in this class, so that
    \[
        \int_{\partial\BB^d} \z_1 \overline{\z_2} \, \rmd\mu(\z_1,\z_2) = 0
    \]
    for any measure corresponding to a kernel satisfying these axioms. It follows that this restriction of the multi-Toeplitz axioms in \cite{GK25} to commutative variables does not provide the class of kernels in which we shall be interested in this paper.
\end{remark}

As the relationship between kernels on \(\NN_0^d\) and measures on \(\partial\BB^d\) shall be important to us, we first elucidate the connection. The perspective we shall take from now on is that we begin with a measure on \(\partial\BB^d\), from which we shall derive a kernel and thence orthogonal polynomials and Verblunsky coefficients. Since $\partial \BB^d$ can be viewed a compact semi-algebraic set, the converse direction, i.e. the question of when a given kernel on \(\NN_0^d\) defines a genuine measure on \(\partial\BB^d\), is subsumed by the more general work of Putinar in the setting of quadratic modules \cite{Put93}. For the convenience of the reader, we will make this connection explicit. 

\begin{theorem}
\label{thm:Kernel-Measure}
    Let \(K\) be a positive kernel on \(\NN_0^d\). There exists a positive measure \(\mu\) on \(\partial\BB^d\) with moment kernel \(K\), i.e. that
    \[
        K(\a,\b) = \int_{\partial\BB^d} \z^{\a} \overline{\z}^{\b} \, \rmd\mu(\z)
    \]
    if and only if
    \[
        K(\a, \b) = \sum_{j=1}^{d} K(\a + e_j, \b + e_j)
    \]
    for all \(\a, \b \in \NN_0^d\).
\end{theorem}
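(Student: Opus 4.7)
The forward direction is a one-line computation exploiting the defining identity $\sum_{j=1}^d |\z_j|^2 = 1$ on $\partial\BB^d$: if $K$ is the moment kernel of $\mu$, then
\[
    \sum_{j=1}^d K(\a+e_j, \b+e_j) = \int_{\partial\BB^d} \z^\a \overline{\z}^\b \Big(\sum_{j=1}^d |\z_j|^2\Big) \, \rmd\mu(\z) = K(\a, \b).
\]

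For the converse direction, the plan is to recast the data so as to apply Putinar's theorem \cite{Put93}. I would define the linear functional $L : \CC[z_1, \ldots, z_d, \bar z_1, \ldots, \bar z_d] \to \CC$ by $L(z^\a \bar z^\b) := K(\a, \b)$, viewed as a functional on the complexification of $\RR[x_1, y_1, \ldots, x_d, y_d]$ via $z_j = x_j + i y_j$. The given recurrence on $K$ rewrites as $L\big((1 - \sum_j z_j \bar z_j) q\big) = 0$ for every $q \in \CC[z, \bar z]$, so $L$ vanishes on the principal ideal cutting out $\partial\BB^d$. The set $\partial\BB^d$ is compact semi-algebraic, viewed as $\{1 - \sum |z_j|^2 \geq 0\} \cap \{\sum|z_j|^2 - 1 \geq 0\}$ in $\RR^{2d}$, and the archimedean condition needed for Putinar's theorem holds automatically since the variety is forced into a bounded region.

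The key difficulty is verifying Putinar's remaining hypothesis, namely that $L(|p|^2) \geq 0$ for \emph{every} $p \in \CC[z, \bar z]$, whereas kernel-positivity of $K$ directly yields this only for holomorphic $q \in \CC[z]$. The plan to bridge this gap is via a GNS construction: complete $\CC[z]$ under the pre-inner product $(q_1, q_2) \mapsto L(q_1 \overline{q_2})$ to produce a Hilbert space $\cH$ with cyclic vector $\xi$ (the image of $1$) and commuting multiplication operators $M_1, \ldots, M_d$ representing $z_1, \ldots, z_d$. The recurrence on $K$ then becomes the spherical-isometry identity $\sum_j M_j^* M_j = I$ on $\cH$. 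Commuting spherical isometries are known to be subnormal, so $(M_1, \ldots, M_d)$ extends to a commuting normal tuple $(N_1, \ldots, N_d)$ on a larger Hilbert space with joint spectrum contained in $\partial\BB^d$. The desired measure $\mu$ is the scalar spectral measure of $(N_1, \ldots, N_d)$ at $\xi$; the moment identity $K(\a, \b) = \int \z^\a \overline{\z}^\b \, \rmd\mu$ then follows from the invariance of $\cH$ under each $N_j$ (together with $N_j|_\cH = M_j$) and commutativity of each $N_j$ with every $N_k^*$ coming from normality.
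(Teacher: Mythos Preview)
Your argument is correct and takes a genuinely different route from the paper. The paper applies Putinar's theorem directly to the description $\partial\BB^d = \{p \geq 0\} \cap \{-p \geq 0\}$ with $p(z,\bar z) = \sum_j |z_j|^2 - 1$, so that the two weighted positivity conditions collapse to the single identity $L(|u|^2 p) = 0$ for all $u \in \CC[z,\bar z]$, which unwinds to the stated recurrence on $K$. For the remaining Putinar hypothesis $L(|u|^2) \geq 0$ for all $u \in \CC[z,\bar z]$, the paper simply asserts that this is equivalent to positivity of $K$ and moves on. You are right to flag this step: kernel positivity gives $L(|q|^2) \geq 0$ only for \emph{holomorphic} $q$, and without the recurrence one can manufacture positive kernels on $\NN_0$ (e.g.\ $K(0,0)=K(2,2)=1$, $K(1,1)=\e$ small, $K(0,2)=K(2,0)=\tfrac12$, all other entries zero) for which $L(|z-\bar z|^2) = 2\e - 1 < 0$. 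Your GNS-plus-Athavale construction both resolves this point and renders the appeal to Putinar unnecessary, since the scalar spectral measure of the normal extension already furnishes $\mu$; the trade is Putinar's Positivstellensatz for Athavale's subnormality theorem for commuting spherical isometries. One clarification worth making explicit in your write-up is that the minimal normal extension of a commuting spherical isometry is a spherical \emph{unitary} (this is part of Athavale's result), which is what lands the joint spectrum on $\partial\BB^d$ rather than merely in $\overline{\BB^d}$.
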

\begin{proof}
    We appeal to \cite[Theorem 3.3]{Put93}; note that in our setting, the \(n\) therein is equal to \(2d\). The unit sphere is the compact (algebraic, hence) semi-algebraic set characterised by the polynomials (in \(z\) and \(\overline{z}\))
    \[
        p(z,\overline{z}) = \sum_{j=1}^d z_j \overline{z}_j - 1, \quad q(z, \overline{z}) = 1 - \sum_{j=1}^d z_j \overline{z}_j,
    \]
    so there exists a positive measure \(\mu\) on \(\partial\BB^d\) with moment matrix \(K\) if and only if the linear functional $L: \CC[z_1, \ldots, z_d, \bar{z}_1, \ldots, \bar{z}_d] \to \CC$ given by by $L(z^{\alpha} \bar{z}^{\beta} ) = K(\alpha, \beta)$ for $\a,\b \in \NN_0^d$ (and extended by linearity) satisfies
    \begin{equation}
        \label{eq:positivityConditions}
    L(|u|^2) \geq 0, \; L(|u|^2 p) \geq 0, \; {\rm and} \; L(|u|^2 q) \geq 0 \quad \text{ for } \quad u \in \CC[z_1, \ldots, z_d, \bar{z}_1, \ldots, \bar{z}_d].
    \end{equation}
    The first condition, that $L(|u|^2) \geq 0$ for all $u \in \CC[z_1, \ldots, z_d, \bar{z}_1, \ldots, \bar{z}_d]$, is equivalent to positivity of \(K\) and hence is true by assumption. Note that \(p = -q\); hence the second and third positivity conditions together imply that \(L(|u|^2 p) = 0\) and hence $L(|u|^2 q) = 0$, i.e. \Cref{eq:positivityConditions} all hold if and only if \(K\) is positive and \(L^2(\lvert u \rvert^2 p) = 0\) for \(u \in \CC[z_1, \ldots, z_d, \bar{z}_1, \ldots, \bar{z}_d]\).
    
    Compute directly that
    \[
        L(z^{\alpha}\bar{z}^{\beta} p(z,\bar{z})) = \sum_{j=1}^d K(\a+e_j, \b+e_j) - K(\a,\b),
    \]
    so that $L(|u|^2 p) = 0$ for all $u \in \CC[z_1, \ldots, z_d, \bar{z}_1,\ldots, \bar{z}_d]$ if and only if
    \[
        K(\a,\b) = \sum_{j=1}^d K(\a + e_j, \b + e_j)
    \]
    for all \(\a, \b \in \NN_0^d\).
\end{proof}

\begin{remark}
    \label{rem:Normalisation}
    The conditions on \(K\) arising from \Cref{thm:Kernel-Measure} also shows that normalisation becomes more complex. Given a positive kernel \(K\), the kernel
    \[
        \tilde{K}(\a,\b) := \frac{K(\a,\b)}{\sqrt{K(\a,\a)}\sqrt{K(\b,\b)}},
    \]
    satisfies \(\tilde{K}(\a,\a) = 1\) for all \(\a \in \NN_0^d\), so is normalised in the same sense as we saw in the univariate and noncommutative settings. However, one consequence of \Cref{thm:Kernel-Measure} is that, for \(\tilde{K}\) to give rise to a measure on \(\partial\BB^d\), taking \(\a = \b = 0\) we must have that
    \[
        \tilde{K}(0,0) = \sum_{j = 1}^d \tilde{K}(e_j, e_j);
    \]
    since \(\tilde{K}\) is definitionally normalised, this becomes \(d = 1\). Hence \(\tilde{K}\) can only be a kernel of moments of a measure on \(\partial\BB^d\) if \(d = 1\), and when \(d > 1\) one cannot simply normalise a kernel and retain the link to measure theory. 
    
    As we shall see, this obstruction shall make later computations more technical, but not completely inaccessible.
\end{remark}

We may now apply the kernel theory of \cite{Con96} to associate a family \((\g_{\a,\b})_{\a,\b\in\NN_0^d}\) of complex numbers inside \(\overline{\DD}\), called \emph{Verblunsky coefficients}, to a measure on \(\partial\BB^d\).

\begin{definition}[Construction of Verblunsky Coefficients] 
    \label{def:VCs}
    Let \(\mu\) be a positive measure on \(\partial\BB^d\).
    \begin{enumerate}[align=left]
        \item[\textbf{Step 1:}] Define a kernel \(K\) on \(\NN_0^d\) by
        \[
            K(\a, \b) = \int_{\partial\BB^d} \z^{\a} \overline{\z}^{\b} \, \rmd\mu(\z)
        \]
        for \(\a,\b \in \NN_0^d\). 
    
        \item[\textbf{Step 2:}] Since \(\mu\) is positive, the matrix
        \[
            [K(\a,\b)]_{\a,\b \preceq \a_0} \in \CC^{(\lvert \a_0 \rvert + 1) \times (\lvert \a_0 \rvert + 1)}
        \]
        is positive semi-definite for all \(\a_0 \in \NN_0^d\), and the shortlex ordering thus gives us a positive kernel \(\tilde{K}\) on \(\NN_0\) via
        \[
            \tilde{K}(n,m) = K(\a,\b)
        \]
        where \(\a,\b\) are respectively elements \(n\) and \(m\) of \(\NN_0^d\) according to the shortlex ordering.
        
        \item[\textbf{Step 3:}] By \cite[Theorem 1.5.3]{Con96} there exists a unique family \((\g_{n,m})_{n,m\in\NN_0}\) associated to \(\tilde{K}\), with \(\g_{n,n+1} = K(n, n+1)\) and \(\lvert \g_{n,m} \rvert \leq 1\) for all \(n,m\in\NN\).
        
        \item[\textbf{Step 4:}] Once more applying the shortlex ordering, we obtain a family \((\g_{\a,\b})_{\a,\b \in \NN_0^d}\) where \(\g_{\a,\b} := \g_{n,m}\) for \(\a\) the \(n^{\text{th}}\) element of \(\NN_0^d\) in the shortlex ordering and \(\b\) the \(m^{\text{th}}\).
    \end{enumerate}
    We call \((\g_{\a,\b})_{\a,\b\in\NN_0^d}\) the \emph{Verblunsky coefficients} of \(\mu\).
\end{definition}

\begin{remark}
    Strictly speaking, we may only apply \cite[Theorem 1.5.3]{Con96} to \(\tilde{K}\) when \(\tilde{K}\) is normalised, which will in general not be the case as discussed in \Cref{rem:Normalisation}. However, the result in fact holds without this assumption, as elaborated upon in \cite[Remark 1.5.4]{Con96}, where a formula for this more general correspondence is provided.
\end{remark}

Accordingly, we define a family \((d_{\a,\b})_{\a,\b \in \NN_0^d}\) by \(d_{n,m} := \sqrt{1 - \lvert \g_{n,m} \rvert^2}\) and \(d_{\a,\b} = d_{n,m}\) when \(\a\) is the \(n^{\text{th}}\) element of \(\NN_0^d\) in the short lexicographical ordering and \(\b\) is the \(m^{\text{th}}\).

\begin{remark}
    At this stage in the noncommutative study \cite{CJ02b}, Constantinescu and Johnson apply the multi-Toeplitz structure of the kernel to see that the Verblunsky coefficients are uniquely determined by a one-parameter sub-family. As we discussed in \Cref{rem:multiToeplitz}, we no longer have this structure, and accordingly in this setting we do not have an analogous result. We shall see later that we are nevertheless able to obtain analogies of many results obtained in the noncommutative setting, modulo some careful handling of the bi-indexed family and the non-normalised kernel.
\end{remark}

We now introduce a technical assumption on our measures that shall be assumed throughout the remainder of the paper.

\begin{definition}
\label{def:nontrivial}
    Let \(\mu\) be a positive measure on \(\partial\BB^d\) with moment kernel \(K\). We say that \(\mu\) is \emph{non-trivial} if the matrix \([K(\a',\b')]_{0 \preceq \a', \b' \preceq \a}\) is invertible for all \(\a \in \NN_0^d\).
\end{definition}

\begin{remark}
    When \(d = 1\), non-triviality of a measure is equivalent to the measure not being finitely atomic. However, for \(d > 1\), this is no longer the case: a trivial measure may be supported on an infinite set in $\partial \BB^d$.
\end{remark}

The non-triviality condition in Definition \ref{def:nontrivial} also arose in the free noncommutative setting \cite{GK25}; a motivation for this assumption from the univariate setting was given in Remark 4.2 therein.

\section{Orthogonal Polynomials on the Unit Sphere in \(\CC^d\)}

In this section we associate to a measure on the unit sphere in \(\CC^d\) a bi-indexed family of polynomials orthogonal with respect to the inner product induced by the measure, though we shall primarily be interested in a uni-indexed subset of these. Motivated by the work of Constantinescu \cite{Con96} and of Constantinescu and Johnson \cite{CJ02b}, we obtain a recurrence for these polynomials involving their Verblunsky coefficients, analogous to the Szeg\H{o} recurrences for orthogonal polynomials on the unit circle.

A measure \(\mu\) on \(\partial\BB^d\) induces a pre-inner product on polynomials in the usual sense:
\[
    \langle p,q \rangle_{\mu} := \int_{\partial\BB^d} p(\z) \overline{q(\z)} \,\rmd\mu(\z).
\]
Note that when \(\mu\) is non-trivial, this will be a genuine inner product, i.e. positive definite.

\begin{definition}
    \label{def:OPs}
    Let \(\mu\) be a non-trivial probability measure on \(\partial \BB^d\). We perform the Gram--Schmidt algorithm in \(\CC[z_1, \cdots, z_d]\) on monomials ordered according to the short lexicographical ordering to obtain \emph{orthonormal polynomials} \((\p_{\a})_{\a \in \NN_0^d}\), and Gram--Schmidt without normalisation to obtain \emph{orthogonal monic polynomials} \((\P_{\a})_{\a \in \NN_0^d}\). We write \(a_{\a,\b}\) for the \(\b^{\text{th}}\) coefficient of \(\p_{\a}\) so that
    \[
        \p_{\a}(z) = \sum_{\b = 0}^{\a} a_{\a,\b} z^{\b}.
    \]
\end{definition}

\begin{remark}
    Note that Gram--Schmidt definitionally provides a polynomial with leading term \(z^{\a}\) for each \(\a \in \NN_0^d\), so that the orthogonal and orthonormal polynomials associated to \(\mu\) are \(\p_\a\) and \(\P_\a\) of degree \(\a\).
\end{remark}

We shall next outline the construction connecting these polynomials to the kernel theory of Constantinescu \cite{Con96}.

\medskip

Given polynomials \(p(z) = \sum_{\a} p_{\a}z^{\a}\), \(q(z) =  \sum_{\a} q_{\a}z^{\a}\), let \(\hat{p} = \begin{bmatrix}p_{\a}\end{bmatrix}_{\a}\), \(\hat{q} = \begin{bmatrix}q_{\a}\end{bmatrix}_{\a}\) be their coefficient vectors ordered according to the shortlex ordering. Notice
\begin{align*}
    \langle p, q\rangle_\mu & = \int_{\partial\BB^d} p(z) \overline{q(z)} \, \rmd\mu(z) \\
    & = \int_{\partial\BB^d} \sum_{\a,\b} p_{\b} \overline{q_{\a}} z^{\b} \overline{z^{\a}} \, \rmd\mu(z) \\
    & = \sum_{\a,\b} p_{\b} \overline{q_{\a}} \int_{\partial\BB^d} z^{\b} \overline{z^\a} \, \rmd\mu(z) \\
    & = \sum_{\a,\b} p_{\b} \overline{q_{\a}} K(\a, \b) \\
    & =: \langle \hat{p}, \hat{q} \rangle_K,
\end{align*}
so that the inner product on polynomials induced by \(\mu\) is equivalent to the inner product on \(\CC^n\) for sufficiently large \(n\) induced by the square matrix \(K_n = [K(\a',\b')]_{0\preceq \a',\b' \preceq \a}\) where \(\a\) takes shortlex-position \(n\) in \(\NN_0^d\).

Now consider the discussion regarding the (upper-triangular) Cholesky factorisation \(A = F^*F\) and the Gram--Schmidt algorithm in \cite{Con96} preceding Remark 1.6.6. Of particular interest to us is the implication of equation (6.13): the columns of \(F^{-1}\) form an orthonormal basis of the inner product space \((\CC^n, \langle\cdot,\cdot\rangle_{A})\), and, perhaps surprisingly, this basis is the same as that obtained via the normalised Gram--Schmidt algorithm with respect to \(A\). Identifying for each \(k = 0, \ldots, n\) the \(k^{\mathrm{th}}\) basis vector \(e_k\) of \(\CC^n\) with the monomial \(z^\b\), where \(\b \in \NN_0^d\) has shortlex-position \(k\), we obtain an orthonormal basis for the space \(\CC[z_1, \ldots, z_d]_{\a}\) of polynomials of shortlex-highest degree \(\a\).

Notice that we are in precisely the situation discussed therein --- our \(A\) is \(K_n\) --- although as we have already remarked, we no longer have the condition \(A_{ii} = 1, 0\leq i \leq n\). However, this caveat does not affect the main ideas of the argument, rather being a simplification to illustrate the point being discussed, and we may extend the argument to non-normalised kernels as follows: if one has an upper-triangular Cholesky factorisation
\[
    B = F^{\ast}F
\]
with \(B\) normalised, then
\[
    A = \big(\diag(\overline{A_{jj}})F^{\ast}\big) \big(F\diag(A_{jj})\big)
\]
is an upper-triangular Cholesky factorisation of our non-normalised \(A\). We hence retain the connection between the Cholesky factorisation of \(A\) and the Gram--Schmidt algorithm in this setting.

It remains to recall that the inner product on \(\CC^n\) induced by \(A\) and the inner product on polynomials induced by \(\mu\) coincide when viewing elements of \(\CC^n\) as coefficients vectors of elements of \(\CC[z_1, \ldots, z_d]\). The orthonormal polynomials \(\p_{\a}\) we obtained by Gram--Schmidt therefore have coefficient vectors arising from the columns of the Cholesky factorisations of truncations of \(K\). Finally for this discussion, we remark that each of these vectors is in fact the \emph{final} column of the inverse of some upper-triangular Cholesky factor by choosing \(n\) appropriately. Hence by letting \(n\) go to infinity we can obtain \(\p_{\a}\) for each \(\a \in \NN_0^d\) as the polynomial whose coefficient vector is the final column of the inverse of the appropriate Cholesky factor.

\begin{remark}
    This connection is exploited also in \cite{CJ02b}, particularly in the proof of Theorem 3.2, where they refer to vectors \(P_{i,j}\) for \(i,j \in \NN\): \(P_{i,j}\) is the final column of the inverse of the Cholesky factor of the truncated kernel window \([K(n,m)]_{i \leq n,m \leq j}\). Note that this construction occurs before the relevant multi-Toeplitz structure is applied, and therefore applies to any positive kernel, including the ones in which we are interested.
    
    If \(\a\) has position \(n\) in the shortlex ordering, then, we have that \(P_{1,n}\) of \cite{CJ02b} is the coefficient vector of our \(\p_{\a}\). We note that while this construction does give a bi-indexed family \((P_{i,j})_{i,j\in\NN}\), we shall primarily be interested in \((\p_{\a})_{\a\in\NN_0^d}\) and hence in the singly-indexed subfamily \((P_{1,n})_{n\in\NN}\).
\end{remark}

The following result appears in \cite{CJ02b} in the proof of Theorem 3.2 and takes the role of a ``weakening" of the Szeg\H{o} recurrences that is nevertheless sufficient for our application.

\begin{proposition}[\cite{CJ02b}, Proof of Theorem 3.2]
    \label{prop:CJ02bRecurrences}
    Let \(K : \NN_0 \times \NN_0 \to \CC\) be a positive kernel such that \(K(n,n) = 1\) for all \(n\in\NN_0\), and for \(i,j\in\NN\) define the vector \(P_{i,j}\), as in the previous remark, to be the final column of the inverse of the upper-triangular Cholesky factor of the window \([K(n,m)]_{i \leq n,m \leq j}\) of \(K\). Correspondingly, let the vectors \(P_{i,j}^{\#}\) be the \emph{first} column of the \emph{lower}-triangular Cholesky factor of the same matrix.

    Let \((\g_{i,j})_{i,j \in \NN_0}\) be the Verblunsky coefficients of the kernel \(K\) arising from \cite[Theorem 1.5.3]{Con96} and let \(d_{i,j} := \sqrt{1 - \lvert \g_{i,j} \rvert^2}\). Then
    \begin{equation}
        \label{eq:SzegoRecurrence}
        P_{1,n} = \frac{1}{d_{1,n}} \begin{bmatrix}0 \\ P_{2,n} \end{bmatrix} - \frac{\g_{1,n}}{d_{1,n}}\begin{bmatrix} P^{\#}_{1,n-1} \\ 0 \end{bmatrix}
    \end{equation}
    and
    \begin{equation}
        \label{eq:SharpRecurrence}
        P_{1,n}^{\#} = -\frac{\overline{\g_{1,n}}}{d_{1,n}} \begin{bmatrix}0 \\ P_{2,n} \end{bmatrix} + \frac{1}{d_{1,n}}\begin{bmatrix} P^{\#}_{1,n-1} \\ 0 \end{bmatrix}.
    \end{equation}
\end{proposition}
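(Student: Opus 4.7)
The plan is to verify both recurrences as direct Gram--Schmidt identities in the finite-dimensional inner-product space $(\CC^n, \langle\cdot,\cdot\rangle_H)$, where $H := [K(l,m)]_{1 \leq l,m \leq n}$, once $\g_{1,n}$ has been identified with the $H$-inner product of two specific embedded vectors.

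First, I would establish the geometric setup. The restriction of $\langle\cdot,\cdot\rangle_H$ to vectors supported on positions $2, \ldots, n$ coincides with the inner product induced by $[K(l,m)]_{2 \leq l,m \leq n}$, and similarly on positions $1, \ldots, n-1$ for the window $[K(l,m)]_{1 \leq l,m \leq n-1}$. Writing the upper Cholesky factorization $[K(l,m)]_{2 \leq l,m \leq n} = F'^{\ast} F'$, one computes $[K(l,m)]_{2 \leq l,m \leq n}\, P_{2,n} = F'^{\ast} e_n$, and upper-triangularity of $F'$ forces this to be a positive scalar multiple of $e_n$. Consequently the embedded vector
\[
    u := \begin{bmatrix}0 \\ P_{2,n}\end{bmatrix}
\]
has unit $H$-norm and is $H$-orthogonal to $e_2, \ldots, e_{n-1}$; a parallel argument using the lower-triangular Cholesky factor of the smaller window on the other side shows that
\[
    w := \begin{bmatrix}P^{\#}_{1,n-1} \\ 0\end{bmatrix}
\]
also has unit $H$-norm and is $H$-orthogonal to $e_2, \ldots, e_{n-1}$. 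Hence $u$ and $w$ span the two-dimensional $H$-orthogonal complement of $\mathrm{span}\{e_2, \ldots, e_{n-1}\}$ in $\CC^n$.

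Next, I would invoke the construction of the Verblunsky coefficients from \cite[Theorem 1.5.3]{Con96}: the parameter $\g_{1,n}$ is precisely, up to the appropriate conjugation convention, the $H$-inner product $\langle u, w\rangle_H$, so that $d_{1,n} = \sqrt{1 - |\g_{1,n}|^2}$ is the normalization arising from the angle between these two unit vectors. Granting this, \eqref{eq:SzegoRecurrence} is verified by checking that its right-hand side $v$ satisfies the defining properties of $P_{1,n}$: unit $H$-norm (by direct expansion using $\|u\|_H = \|w\|_H = 1$ together with the known value of $\langle u, w\rangle_H$); $H$-orthogonality to $e_1, \ldots, e_{n-1}$ (the orthogonality to $e_2, \ldots, e_{n-1}$ is immediate, and orthogonality to $e_1$ follows from $v$ being $H$-orthogonal to $w$, whose first coordinate is nonzero); and the correct sign of its last coordinate, inherited from $P_{2,n}$ via the upper-triangular Cholesky normalization. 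Uniqueness of the normalized output forces $v = P_{1,n}$, and \eqref{eq:SharpRecurrence} follows by an entirely parallel argument in which the roles of $u$ and $w$ are swapped.

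The main obstacle is the identification in step two: reconciling the iterative construction of \cite[Theorem 1.5.3]{Con96} with the explicit inner-product formula $\langle u, w\rangle_H$ — including the correct phase and conjugation conventions dictated by the Cholesky normalizations on both sides — is the technical heart of the argument. Once this identification is in hand the remainder of the proof reduces to routine linear algebra, and follows the approach in the proof of \cite[Theorem 3.2]{CJ02b}.
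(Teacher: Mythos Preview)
The paper does not supply its own proof of this proposition: it is stated as a quotation from \cite[Proof of Theorem 3.2]{CJ02b}, with no argument given here. Your sketch --- setting up the $H$-inner product, recognising the embedded vectors $u$ and $w$ as unit vectors spanning the complement of $\mathrm{span}\{e_2,\ldots,e_{n-1}\}$, identifying $\g_{1,n}$ with $\langle u,w\rangle_H$ via the construction of \cite[Theorem 1.5.3]{Con96}, and then reading off the recurrences as the Gram--Schmidt step in that two-dimensional space --- is exactly the line of argument in \cite{CJ02b} that the paper is citing, so your approach is consistent with the intended source.

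Your honest flag that the identification $\g_{1,n} = \langle u,w\rangle_H$ (with the correct phase convention) is the crux is accurate: this is precisely where one must unwind Constantinescu's recursive defect/row-contraction construction, and it is not automatic. As written your proposal is a plan rather than a proof, since that step is asserted rather than carried out; but as a plan it is correct and matches the cited reference.
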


Note that this result arises from \cite{CJ02b}, and as such the condition that \(K(n,n) = 1\) for all \(n \in \NN_0\) is implicit. As we have already discussed, any kernel \(K\) that does not satisfy this condition can be obtained from a kernel \(\tilde{K}\) that does by replacing the Cholesky factors \(F_{i,j}\) of truncations of \(\tilde{K}\) by \(F_{i,j} \cdot \mathrm{diag}\big(K(n,n)\big)_{n=i}^{j}\). As such, we may extend \Cref{prop:CJ02bRecurrences} to kernels of non-trivial positive measures on \(\partial \BB^d\) as follows.

\begin{lemma}
    Let \(K\) be the kernel of a non-trivial positive measure \(\mu\) on \(\partial \BB^d\), considered as a kernel on \(\NN_0\) via \(K(n,m) = K(\a, \b)\) when \(\a, \b\) take respective positions \(n,m\) in \(\NN_0^d\) according to the shortlex ordering. For \(i,j \in \NN_0\), let
    \[
        K_{i,j} := \begin{bmatrix} K(n,m) \end{bmatrix}_{\substack{i\leq n \leq j \\ i \leq m \leq j}}
    \]
    have upper-triangular Cholesky factorisation \(K_{i,j} = F_{i,j}^*F_{i,j}\) and lower-triangular Cholesky factorisation \(K_{i,j} = G_{i,j}^*G_{i,j}\). Let \(P_{i,j}\) be the final column of \(F_{i,j}^{-1}\) and let \(P_{i,j}^{\#}\) be the first column of \(G_{i,j}^{-1}\). Then \eqref{eq:SzegoRecurrence} and \eqref{eq:SharpRecurrence} hold.
\end{lemma}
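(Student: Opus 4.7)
The strategy, hinted at in the paragraph preceding the statement, is to reduce to the normalised case of Proposition~\ref{prop:CJ02bRecurrences} via a diagonal rescaling. Define the normalised kernel $\check{K}(n,m) := K(n,m)/\sqrt{K(n,n)K(m,m)}$ and, for $i \leq j$, the positive diagonal matrix $D_{i,j} := \diag(\sqrt{K(n,n)})_{n=i}^{j}$, so that $K_{i,j} = D_{i,j} \check{K}_{i,j} D_{i,j}$. I would first track how the Cholesky factors transform: by the uniqueness of the upper-triangular Cholesky factor with positive diagonal one has $F_{i,j} = \check{F}_{i,j} D_{i,j}$, hence $F_{i,j}^{-1} = D_{i,j}^{-1} \check{F}_{i,j}^{-1}$, and reading off last columns yields $P_{i,j} = D_{i,j}^{-1} \check{P}_{i,j}$. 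The analogous identity for the lower-triangular factorisation reads $G_{i,j} = \check{G}_{i,j} D_{i,j}$, and reading off first columns gives $P_{i,j}^{\#} = D_{i,j}^{-1} \check{P}_{i,j}^{\#}$.

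Because the Verblunsky coefficients of $K$ and $\check{K}$ coincide (by the extension of \cite[Theorem~1.5.3]{Con96} to non-normalised kernels noted in Remark~1.5.4 there, and already invoked in Definition~\ref{def:VCs}), applying Proposition~\ref{prop:CJ02bRecurrences} to $\check{K}$ directly produces the analogues of \eqref{eq:SzegoRecurrence} and \eqref{eq:SharpRecurrence} with each $P$ replaced by $\check{P}$ and identical scalar coefficients $\g_{1,n}$, $d_{1,n}$. It then remains to left-multiply both sides by $D_{1,n}^{-1}$; the essential bookkeeping is the compatibility of the two block decompositions
\[
    D_{1,n}^{-1} = \begin{bmatrix} 1/\sqrt{K(1,1)} & 0 \\ 0 & D_{2,n}^{-1} \end{bmatrix} = \begin{bmatrix} D_{1,n-1}^{-1} & 0 \\ 0 & 1/\sqrt{K(n,n)} \end{bmatrix}
\]
with the zero-padding in the recurrence, which gives
\[
    D_{1,n}^{-1}\begin{bmatrix} 0 \\ \check{P}_{2,n} \end{bmatrix} = \begin{bmatrix} 0 \\ P_{2,n} \end{bmatrix}, \qquad D_{1,n}^{-1}\begin{bmatrix} \check{P}_{1,n-1}^{\#} \\ 0 \end{bmatrix} = \begin{bmatrix} P_{1,n-1}^{\#} \\ 0 \end{bmatrix},
\]
delivering exactly \eqref{eq:SzegoRecurrence} and \eqref{eq:SharpRecurrence}.

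I expect the only conceptually non-trivial point to be the assertion that the Verblunsky coefficients are unchanged under the diagonal rescaling $K \mapsto \check{K}$. This should follow directly from \cite[Remark~1.5.4]{Con96}, whose explicit formulas are tailored to the non-normalised case; failing that, one can argue that $\g_{1,n}$ is determined by ratios of Cholesky-derived quantities in which the diagonal scaling cancels, and so reduce to the normalised construction. Everything else in the argument is a routine tracking of the diagonal scalings.
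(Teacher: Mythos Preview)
Your proposal is correct and follows essentially the same route as the paper: normalise the kernel by the diagonal rescaling, apply Proposition~\ref{prop:CJ02bRecurrences} to the normalised kernel, and then undo the rescaling via the block structure of $D_{1,n}^{-1}$. You are in fact slightly more careful than the paper on two points---you correctly take $D_{i,j} = \diag(\sqrt{K(n,n)})$ rather than $\diag(K(n,n))$, and you explicitly flag (and justify via \cite[Remark~1.5.4]{Con96}) that the Verblunsky coefficients of $K$ and $\check{K}$ agree, a point the paper leaves implicit.
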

\begin{proof}
    Define \(\tilde{K}\) by
    \[
        \tilde{K}(n,m) = \frac{K(n,m)}{\sqrt{K(n,n)}\sqrt{K(m,m)}}, \quad \text{ for } n,m \in \NN
    \]
    and for \(i,j \in \NN\), let \(\tilde{F}_{i,j}, \tilde{G}_{i,j}, \tilde{P}_{i,j}\) and \(\tilde{P}_{i,j}^{\#}\) be as above with \(\tilde{K}\) in place of \(K\). Then we have that
    \[
        F_{i,j} = \tilde{F}_{i,j}D_{i,j} \quad \text{ and } \quad G_{i,j} = \tilde{G}_{i,j}D_{i,j}
    \]
    where \(D_{i,j} := \mathrm{diag}\big(K(n,n)\big)_{n = i}^{j} \in \CC^{(1+j-i) \times (1+j-i)}\) is always invertible, since \(\mu\) is non-trivial so that \(K(n,n)\) is never zero, and hence
    \[
        F_{i,j}^{-1} = D^{-1}_{i,j} \tilde{F}_{i,j}^{-1} \quad \text{ and } \quad G_{i,j}^{-1} = D^{-1}_{i,j} \tilde{G}_{i,j}^{-1}.
    \]
    By considering the final column of the former relation and the first column of the latter, we see that
    \[
        P_{i,j} = D^{-1}_{i,j} \tilde{P}_{i,j} \quad \text{ and } \quad P_{i,j}^{\#} = D^{-1}_{i,j} \tilde{P}_{i,j}^{\#},
    \]

    Notice that \(\tilde{K}(n,n) = 1\) for all \(n \in \NN\), so that by \Cref{prop:CJ02bRecurrences} we have that \eqref{eq:SzegoRecurrence} and \eqref{eq:SharpRecurrence} hold with \(\tilde{P}_{i,j}, \tilde{P}_{i,j}^{\#}\) in place of \(P_{i,j}, P_{i,j}^{\#}\). Multiplying the first of these recurrences by \(D_{i,j}^{-1}\), we see that
    
    \begin{align*}
        P_{1,n} & = D_{1,n}^{-1} \tilde{P}_{1,n} = D_{1,n}^{-1} \left(\frac{1}{d_{1,n}} \begin{bmatrix}0 \\ \tilde{P}_{2,n} \end{bmatrix} - \frac{\g_{1,n}}{d_{1,n}}\begin{bmatrix} \tilde{P}^{\#}_{1,n-1} \\ 0 \end{bmatrix} \right) \\
        & = \frac{1}{d_{1,n}} \begin{bmatrix}0 \\ D_{2, n}^{-1} \tilde{P}_{2,n} \end{bmatrix} - \frac{\g_{1,n}}{d_{1,n}}\begin{bmatrix} D_{1,n-1}^{-1}\tilde{P}^{\#}_{1,n-1} \\ 0 \end{bmatrix} \\
        & = \frac{1}{d_{1,n}} \begin{bmatrix}0 \\ P_{2,n} \end{bmatrix} - \frac{\g_{1,n}}{d_{1,n}}\begin{bmatrix} P^{\#}_{1,n-1} \\ 0 \end{bmatrix},
    \end{align*}
    so that \eqref{eq:SzegoRecurrence} holds, and \eqref{eq:SharpRecurrence} is proven similarly with the recurrence for \(\tilde{P}^{\#}_{i,j}\).
\end{proof}

We note that, as in the univariate case, \(P_{1,1} = P_{1,1}^{\#} = K(1,1)\) so that for a probability measure \(\mu\) on \(\partial \BB^d\) we have \(P_{1,1} = P_{1,1}^{\#} = 1\).

\begin{definition}
    Let \(\mu\) be a non-trivial positive measure on \(\partial\BB^d\) and let \(P_{i,j}^{\#}\) be as above. We define the \emph{sharp (orthonormal) polynomials} \(\p_{\a}^{\#}\) for \(\a \in \NN_0^d\) to be the polynomial with coefficient vector \(P_{1,n}^{\#}\) where \(\a\) takes position \(n\) under the shortlex ordering.
\end{definition}

We may rewrite \eqref{eq:SzegoRecurrence} and \eqref{eq:SharpRecurrence} in terms of the orthonormal polynomials as follows.

\begin{theorem}[Szeg\H{o}-Type Recurrences]
    \label{thm:Recurrences}
    Let \(\mu\) be a non-trivial positive measure on \(\partial\BB^d\) and recall for \(\a,\b\in\NN_0^d\) and \(i,j\in\NN\) the polynomials \(\p_\a\), \(\p_\a^{\#}\) and the vectors \(P_{i,j}\), \(P_{i,j}^{\#}\) from the above discussion. For \(\a \in \NN_0^d\), let \(\p_{e_1, \a}\) be the polynomial with coefficients \(P_{2,n}\) where \(\a\) has shortlex position \(n\), and write \(P_{2,n} = [\p_{e_1, \a}^{(\b)}]_{\b}\). Then we have the following analogue of the Szeg\H{o} recurrences:
    \begin{equation}
        \p_\a(z) = \frac{1}{d_{0,\a}} \left(\sum_{\b} \p_{e_1, \a}^{(\b)} z^{\mathrm{succ}(\b)}\right) - \frac{\g_{0,\a}}{d_{0,\a}} \p^{\#}_{\mathrm{prec}(\a)}(z),
    \end{equation}
    \begin{equation}
        \p_{\a}^{\#}(z) = -\frac{\overline{\g_{0,\a}}}{d_{0,\a}} \left(\sum_{\b} \p_{e_1, \a}^{(\b)} z^{\mathrm{succ}(\b)}\right) + \frac{1}{d_{0,\a}} \p_{\mathrm{prec}(\a)}^{\#}(z).
    \end{equation}
\end{theorem}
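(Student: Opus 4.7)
The plan is to translate the vector recurrences \eqref{eq:SzegoRecurrence} and \eqref{eq:SharpRecurrence} of the preceding Lemma into identities of polynomials, by dictionary-matching each vector-valued operation with its effect on the associated polynomials under the shortlex identification of monomials with coordinates of \(\CC^n\).

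First I would record the polynomial dictionary. With \(\a\) at shortlex position \(n\), the vectors \(P_{1,n}\), \(P_{1,n}^{\#}\) and \(P_{1,n-1}^{\#}\) are, by definition, the coefficient vectors of \(\p_{\a}\), \(\p_{\a}^{\#}\) and \(\p_{\mathrm{prec}(\a)}^{\#}\) respectively, while \(\p_{e_1,\a}\) is, by the theorem's own definition, the polynomial whose coefficients are the entries of \(P_{2,n}\) read in shortlex order beginning at \(\b = 0\); concretely \(\p_{e_1,\a}(z) = \sum_{0 \preceq \b \preceq \mathrm{prec}(\a)} \p_{e_1,\a}^{(\b)} z^{\b}\). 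I would also reconcile the two indexings of the Verblunsky coefficients by noting that Step 4 of \Cref{def:VCs} identifies \(\g_{1,n}\) and \(d_{1,n}\) of the Lemma with \(\g_{0,\a}\) and \(d_{0,\a}\) of the theorem: the indexing \(1,n\) in the Lemma is a (1-indexed) shortlex pair which corresponds to the pair \((0,\a)\) in \(\NN_0^d\).

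Next I would interpret the two zero-padding operations appearing on the right-hand sides. Prepending a zero to \(P_{2,n}\) produces a length-\(n\) vector with a zero at shortlex position \(0\) and the entry \(\p_{e_1,\a}^{(\b)}\) at shortlex position \(\mathrm{succ}(\b)\) for each \(\b\) with \(0 \preceq \b \preceq \mathrm{prec}(\a)\); this is precisely the coefficient vector of \(\sum_{\b} \p_{e_1,\a}^{(\b)} z^{\mathrm{succ}(\b)}\). Appending a zero to \(P_{1,n-1}^{\#}\) produces a length-\(n\) vector whose first \(n-1\) entries agree with the coefficients of \(\p_{\mathrm{prec}(\a)}^{\#}\) and whose last entry (the coefficient of \(z^{\a}\)) is zero; this is the coefficient vector of \(\p_{\mathrm{prec}(\a)}^{\#}(z)\) when that polynomial is viewed as an element of the ambient space \(\CC[z_1,\ldots,z_d]_{\a}\).

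With these identifications in hand, reading \eqref{eq:SzegoRecurrence} and \eqref{eq:SharpRecurrence} as equalities of coefficient vectors under the canonical linear isomorphism \(\CC^n \cong \CC[z_1,\ldots,z_d]_{\a}\) immediately produces the two displayed polynomial recurrences. The main obstacle is purely a bookkeeping one, and in particular avoiding the natural-but-wrong temptation to read the shift \(z^{\b} \mapsto z^{\mathrm{succ}(\b)}\) as multiplication by one of the coordinate variables \(z_j\): for \(d \geq 2\) the shortlex successor is not given by any such multiplication (already \(\mathrm{succ}(e_1) = e_2\), which is not \(z_j e_1\) for any \(j\)), which is exactly why the first term on the right must be phrased via the auxiliary polynomial \(\p_{e_1,\a}\) and a re-indexing of its coefficients rather than via the classical multiplication by \(z\) familiar from the univariate Szeg\H{o} recurrences.
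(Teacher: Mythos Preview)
Your proposal is correct and is exactly the approach the paper takes: the paper presents \Cref{thm:Recurrences} as a direct rewriting of the vector recurrences \eqref{eq:SzegoRecurrence} and \eqref{eq:SharpRecurrence} in polynomial language, with no separate proof beyond the sentence ``We may rewrite \eqref{eq:SzegoRecurrence} and \eqref{eq:SharpRecurrence} in terms of the orthonormal polynomials as follows''. Your dictionary-matching of zero-padding with the shortlex shift \(z^{\b}\mapsto z^{\mathrm{succ}(\b)}\) and your reconciliation of the \(1\)-indexed \(\g_{1,n}\) with \(\g_{0,\a}\) are precisely the bookkeeping points the paper leaves implicit (and your warning that this shift is not multiplication by a coordinate variable is the content of the paper's remark immediately following the theorem).
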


\begin{remark}
    \label{rem:SharpPolys}
    Note that the expression \(\sum_{\b} \p_{e_1, \a}^{(\b)} z^{\mathrm{succ}(\b)}\) appearing in these recurrences is the result of shifting each monomial of the polynomial \(\p_{e_1, \a}\) up one position in the shortlex ordering, so that this is in one sense a natural analogue of the term \(z\p_{n-1}\) appearing in the classical Szeg\H{o} recurrences.
\end{remark}

\begin{remark}
    Here the sharp polynomials \(\p_{\a}^{\#}\) take the role of the traditional reverse polynomials in the Szeg\H{o} recurrences. In one variable, the reverse polynomial of a polynomial \(p\in\CC[z]\) may be obtained directly via the formula
    \[
        p^{\#}(z) = z^{\mathrm{deg}(p)}\overline{p(1/\overline{z})};
    \]
    however, for polynomials on \(\BB^d\) no such relation holds. Rather, the recurrence relations \eqref{eq:SzegoRecurrence} and \eqref{eq:SharpRecurrence} provide motivation for us to define the sharp orthogonal polynomials as we have, and we do not claim a general notion of ``reverse polynomial'' for an arbitrary polynomial \(p \in \CC[z_1, \ldots, z_d]\).
\end{remark}

Finally, we recall that the association between a measure on the unit circle and its orthogonal polynomials on the unit circle is bijective, as one can recover the measure from the orthogonal polynomials via Verblunsky's theorem (also called Favard's theorem for the unit circle), see \cite[Theorem 1.7.11]{Sim05a}. In the kernel framework of \cite{Con96}, there exists a similar result --- Remark 1.5.4 --- that says a kernel \(K\) is determined uniquely not by the Verblunsky coefficients alone, but by the pair \(((K(n,n))_{n}, (\g_{i,j})_{i,j})\). We now use this to acquire a corresponding result in our setting as follows; note that when \(d = 1\), \(K\) is normalised and one recovers Verblunsky's theorem.

\begin{theorem}[Verblunsky's Theorem on \(\partial\BB^d\)]
    Let \((\g_{\a,\b})_{\a,\b \in \NN_0^d} \subseteq \DD\) and for \(\a\in\NN_0^d\) let $(s_{\alpha})_{\alpha \in \NN_0^d}$ be a given real-valued sequence. Let \(K\) be the unique positive kernel on \(\NN_0^d\) with Verblunsky coefficients \((\g_{\a,\b})_{\a,\b\in\NN_0^d}\) and diagonal \(K(\a,\a) = s_\a\) for all \(\a \in \NN_0^d\) guaranteed by \cite[Remark 1.5.4]{Con96}.
    
    Then there is a unique positive measure \(\mu\) on \(\partial\BB^d\) with Verblunsky coefficients \((\g_{\a,\b})_{\a,\b\in\NN_0^d}\) such that
    \[
        \int_{\partial\BB^d} \lvert \z^\a \rvert^2 \, \rmd\mu(\z) = s_{\alpha}
    \]
    for all \(\a \in \NN_0^d\) if and only if
    \[
        K(\a,\b) = \sum_{j=1}^d K(\a + e_j, \b + e_j)
    \]
    for all \(\a, \b \in \NN_0^d\); in this case, the moment kernel of \(\mu\) is \(K\).
\end{theorem}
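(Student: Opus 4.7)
The plan is to reduce this statement to \Cref{thm:Kernel-Measure} together with the construction in \Cref{def:VCs} and the uniqueness part of \cite[Remark 1.5.4]{Con96}. The ``if and only if'' in the statement is essentially a restatement of the summation condition characterising moment kernels on \(\partial\BB^d\); the substance of the theorem is to lift that kernel-level statement to the level of measures in a way that is compatible with the Verblunsky / diagonal parameterisation.

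For the forward direction, suppose such a measure \(\mu\) exists, and let \(K_\mu\) denote its moment kernel. Then \(K_\mu(\a,\a) = \int_{\partial\BB^d} |\z^{\a}|^2 \, \rmd\mu(\z) = s_{\a}\), and by the construction in \Cref{def:VCs} applied to \(\mu\) — which is, by design, nothing more than passing through the moment kernel and invoking \cite[Theorem 1.5.3]{Con96} — the Verblunsky coefficients derived from \(K_\mu\) coincide with those derived from \(\mu\), namely \((\g_{\a,\b})\). The uniqueness asserted in \cite[Remark 1.5.4]{Con96} then forces \(K_\mu = K\). Since \(K_\mu\) is a moment kernel of a measure on \(\partial\BB^d\), \Cref{thm:Kernel-Measure} yields the summation identity \(K(\a,\b) = \sum_{j=1}^d K(\a+e_j, \b+e_j)\).

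For the converse, assume the summation identity holds. Then \Cref{thm:Kernel-Measure} produces a positive measure \(\mu\) on \(\partial\BB^d\) with moment kernel \(K\); in particular \(\int_{\partial\BB^d}|\z^{\a}|^2\, \rmd\mu(\z) = K(\a,\a) = s_\a\), and running \Cref{def:VCs} on \(\mu\) recovers \((\g_{\a,\b})\) as the Verblunsky coefficients of \(\mu\). For the uniqueness of \(\mu\): any other positive measure \(\mu'\) meeting the hypotheses must (by repeating the argument from the forward direction) also have \(K\) as its moment kernel, so \(\mu\) and \(\mu'\) agree against every monomial \(\z^{\a}\overline{\z}^{\b}\). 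The algebra generated by these monomials is a point-separating, conjugation-closed unital subalgebra of \(C(\partial\BB^d)\), so by Stone--Weierstrass it is uniformly dense in \(C(\partial\BB^d)\); the Riesz representation theorem then gives \(\mu = \mu'\).

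The only genuine subtlety is in the first step: matching the two a priori different notions of ``Verblunsky coefficients of \(\mu\)''. One is produced by \Cref{def:VCs} directly from \(\mu\); the other is produced by \cite[Remark 1.5.4]{Con96} from the kernel \(K\). Fortunately these are the same construction by definition, since \Cref{def:VCs} is built precisely by forming the moment kernel and then invoking the Constantinescu correspondence, so this coincidence is immediate rather than an obstacle. With that identification in hand, the theorem is a direct consequence of \Cref{thm:Kernel-Measure} and compact-set moment determinacy on \(\partial\BB^d\).
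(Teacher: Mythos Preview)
Your proof is correct and follows the same approach as the paper, which simply notes that the result is immediate from \cite[Remark 1.5.4]{Con96} and \Cref{thm:Kernel-Measure}. You have, if anything, been more thorough: the paper does not spell out the Stone--Weierstrass/Riesz argument for uniqueness of the measure given its moments, whereas you do.
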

\begin{proof}
    The proof follows immediately from \cite[Remark 1.5.4]{Con96} and \Cref{thm:Kernel-Measure}.
\end{proof}

\section{A Multivariate Christoffel Function}

\begin{definition}
    \label{def:CDK}
    Let \(\mu\) be a non-trivial probability measure on \(\partial\BB^d\) with orthonormal polynomials \((\p_\a)_{\a \in \NN_0^d}\). For \(n \in \NN_0\) and \(z,w \in \overline{\BB^d}\), we associate to \(\mu\) the \emph{Christoffel--Darboux kernel}
    \[
        \k_{\mu,n}(z,w) := \sum_{\a = 0}^{\a(n)} \p_{\a}(z) \overline{\p_{\a}(w)} = 1 + \sum_{0 \prec \a \preceq a(n)} \p_{\a}(z) \overline{\p_{\a}(w)},
    \]
    and \emph{Christoffel approximate}
    \[
        \l^{(d)}_{n}(z; \rmd \mu) := \k_{\mu,n}(z,z)^{-1} = \left(\sum_{\a = 0}^{\a(n)} \lvert\p_{\a}(z)\rvert^2\right)^{-1}.
    \]
\end{definition}

For \(\a \in \NN_0^d\), recall that \(\CC[z_1, \ldots, z_d]_{\a} := \mathrm{span}\{z^{\b}\}_{0 \preceq \b \preceq \a} \).

\begin{lemma}
    \label{lem:CFMinimumFormula}
    Let \(\mu\) be a non-trivial probability measure on \(\partial\BB^d\). For \(n\in\NN_0\) and \(z \in \overline{\BB^d}\) we have
    \begin{equation}
        \label{eq:CFMinimumFormula}
        \l^{(d)}_n(z; \rmd \mu) = \min\left\{ \int_{\partial \BB^d} \lvert q(\z) \rvert^2 \, \rmd \mu(\z) : q \in \CC[z_1, \ldots, z_d]_{\a(n)}, q(z) = 1 \right\}.
    \end{equation}
\end{lemma}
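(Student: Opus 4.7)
The plan is to use the orthonormal basis $(\p_\a)_{0 \preceq \a \preceq \a(n)}$ of $\CC[z_1,\ldots,z_d]_{\a(n)}$ with respect to the inner product $\langle\cdot,\cdot\rangle_\mu$, which is a genuine inner product since $\mu$ is non-trivial. Every $q \in \CC[z_1,\ldots,z_d]_{\a(n)}$ admits a unique expansion
\[
q(\z) = \sum_{\a=0}^{\a(n)} c_\a \p_\a(\z), \qquad c_\a = \langle q, \p_\a\rangle_\mu,
\]
and by orthonormality
\[
\int_{\partial\BB^d} |q(\z)|^2 \, \rmd\mu(\z) = \sum_{\a=0}^{\a(n)} |c_\a|^2.
\]
The constraint $q(z) = 1$ translates to the single linear condition $\sum_{\a=0}^{\a(n)} c_\a \p_\a(z) = 1$, so the problem reduces to minimising $\sum |c_\a|^2$ subject to this constraint.

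Next I would apply the Cauchy--Schwarz inequality in $\CC^{N}$ (with $N$ the number of multi-indices $0 \preceq \a \preceq \a(n)$) to the vectors $(c_\a)_\a$ and $(\overline{\p_\a(z)})_\a$:
\[
1 = \left|\sum_{\a=0}^{\a(n)} c_\a \p_\a(z)\right|^2 \leq \left(\sum_{\a=0}^{\a(n)} |c_\a|^2\right)\left(\sum_{\a=0}^{\a(n)} |\p_\a(z)|^2\right),
\]
which rearranges to
\[
\int_{\partial\BB^d} |q(\z)|^2 \, \rmd\mu(\z) = \sum_{\a=0}^{\a(n)} |c_\a|^2 \geq \left(\sum_{\a=0}^{\a(n)} |\p_\a(z)|^2\right)^{-1} = \l^{(d)}_n(z; \rmd\mu),
\]
establishing the $\geq$ direction.

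For the matching upper bound (which also shows the minimum is attained, justifying the use of $\min$ rather than $\inf$), I would exhibit the Cauchy--Schwarz extremiser. Set
\[
c_\a := \frac{\overline{\p_\a(z)}}{\sum_{\b=0}^{\a(n)} |\p_\b(z)|^2}, \qquad q_{\min}(\z) := \sum_{\a=0}^{\a(n)} c_\a \p_\a(\z);
\]
the denominator is strictly positive since $\p_0$ is a non-zero constant, so $c_\a$ is well-defined. A direct check shows $q_{\min}(z) = 1$ and $\sum_\a |c_\a|^2 = \l^{(d)}_n(z; \rmd\mu)$, giving equality. The argument is essentially textbook and I do not anticipate any serious obstacle; the only small point requiring attention is confirming that the inner product induced by $\mu$ restricted to $\CC[z_1,\ldots,z_d]_{\a(n)}$ is genuinely positive definite (so that $(\p_\a)$ really is an orthonormal basis of this finite-dimensional subspace), but this is immediate from non-triviality of $\mu$ as recorded in \Cref{def:nontrivial}.
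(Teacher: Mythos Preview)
Your argument is correct and is the standard textbook proof of this fact. The paper's proof reaches the same conclusion but packages the lower-bound step differently: instead of invoking Cauchy--Schwarz directly, it forms the $2\times N$ matrix $X$ with rows $(a_\a)_\a$ and $(\overline{\p_\a(z)})_\a$, appeals to \cite[Theorem 14.10]{Pau02} to deduce that the Gram matrix $XX^\ast$ is positive, and then takes a Schur complement to obtain $\l_n^{(d)}(z;\rmd\mu) \leq \sum_\a |a_\a|^2$. This is of course equivalent to Cauchy--Schwarz (positivity of a $2\times 2$ matrix with off-diagonal entry $1$ is exactly the inequality you wrote), so the two proofs are mathematically the same; the paper's formulation is inherited from the noncommutative versions in \cite{BMV23} and \cite{GK25}, where the Hilbert C*-module language is genuinely needed, whereas your direct Cauchy--Schwarz is the natural simplification once one is back in the scalar setting.
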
 
\begin{proof}
    We proceed as in the proof of the analogous result of \cite{GK25}, which in turn is modelled on a proof of \cite{BMV23}, though here things are decidedly simpler: our variables now commute, and as \(k = 1\) the tensor products collapse to multiplication.

    Fix \(z \in \overline{\BB^d}\). For \(\a \in \NN_0^d\), let \(c_{\a} := \l^{(d)}_n(z; \rmd \mu) \overline{\p_{\a}(z)}\) and
    \[
        P_n := \sum_{\a=0}^{\a(n)} c_{\a} \p_{\a}.
    \]

    Then
    \[
        P_n(z) = \sum_{\a=0}^{\a(n)} \l^{(d)}_n(z; \rmd \mu) \overline{\p_{\a}(z)} \p_{\a}(z) =  \l^{(d)}_n(z; \rmd \mu) \sum_{\a=0}^{\a(n)}\lvert\p_{\a}(z)\rvert^2 = 1
    \]
    so \(P_n\) is admissible for our minimisation. Subsequently, for \(\z \in \partial \BB^d\),
    \[
        \lvert P_n(\z) \rvert^2 = \left( \sum_{\a=0}^{\a(n)} c_{\a} \p_{\a}(\z) \right) \left( \overline{\sum_{\a=0}^{\a(n)} c_{\a} \p_{\a}(\z)} \right) = \sum_{\a,\b=0}^{\a(n)} c_{\a}\overline{c_{\b}} \p_{\a}(\z)\overline{\p_{\b}(\z)}
    \]
    so that
    \begin{align*}
        \int_{\partial \BB^d} \lvert P_n(\z) \rvert^2 \, \rmd \mu(\z) & =  \sum_{\a,\b=0}^{\a(n)} c_{\a}\overline{c_{\b}} \int_{\partial\BB^d} \p_{\a}(\z)\overline{\p_{\b}(\z)} \, \rmd \mu(\z) \\
        & = \sum_{\a,\b=0}^{\a(n)} c_{\a}\overline{c_{\b}} \d_{\a, \b} \\
        & = \sum_{\a=0}^{\a(n)} \lvert c_{\a}\rvert^2 \\
        & = \sum_{\a=0}^{\a(n)} \lvert \l^{(d)}_n(z; \rmd \mu) \overline{\p_{\a}(z)}\rvert^2 \\
        & = \l^{(d)}_n(z; \rmd \mu)^2 \left( \sum_{\a=0}^{\a(n)} \lvert \p_{\a}(z) \rvert^2 \right) \\
        & = \l^{(d)}_n(z; \rmd \mu),
    \end{align*}
    so that \(\l^{(d)}_n(z; \rmd \mu)\) is in the set over which we are minimising.

    Now, given an arbitrary
    \[
        q = \sum_{\a = 0}^{\a(n)} a_{\a} \p_{\a} \in \CC[z_1, \ldots, z_d]_{\a(n)}
    \]
    with \(q(z) = 1\), consider the matrix
    \[
        X = \begin{bmatrix}
            a_{0} & \cdots & a_{\a(n)} \\[6pt] \overline{\p_{0}(z)} & \cdots & \overline{\p_{\a(n)}(z)}
        \end{bmatrix}.
    \]
    By \cite[Theorem 14.10]{Pau02}, viewing \(\CC\) as a Hilbert C*-module over itself, we have that
    \begin{align*}
        \begin{bmatrix} \sum_{k = 1}^{n+1} \langle X_{ik}, X_{jk} \rangle \end{bmatrix}_{i,j=1}^{2} & = \begin{bmatrix}
            \sum_{\a = 0}^{\a(n)} a_{\a}\overline{a_{\a}} & \sum_{\a = 0}^{\a(n)} a_{\a}\overline{\overline{\p_{\a}(z)}} \\[6pt]
            \sum_{\a = 0}^{\a(n)} \overline{\p_{\a}(z)}\overline{a_{\a}} & \sum_{\a = 0}^{\a(n)} \overline{\p_{\a}(z)}\overline{\overline{\p_{\a}(z)}}
        \end{bmatrix} \\[6pt]
        & = \begin{bmatrix}
            \sum_{\a = 0}^{\a(n)} \lvert a_{\a} \rvert^2 & q(z) \\[6pt] \overline{q(z)} & \sum_{\a = 0}^{\a(n)} \lvert \p_{\a}(z) \rvert^2
        \end{bmatrix} \\[6pt]
        & = \begin{bmatrix}
            \sum_{\a = 0}^{\a(n)} \lvert a_{\a} \rvert^2 & 1 \\ 1 & \l^{(d)}_n(z; \rmd \mu)^{-1}
        \end{bmatrix}
    \end{align*}
    is a positive matrix; taking the Schur complement then further implies the inequality
    \[
        1\cdot\big(\l^{(d)}_n(z; \rmd \mu)^{-1}\big)^{-1}\cdot 1 \leq \sum_{\a = 0}^{\a(n)} \lvert a_{\a} \rvert^2,
    \]
    i.e.
    \[
        \l^{(d)}_n(z; \rmd \mu) \leq \sum_{\a = 0}^{\a(n)} \lvert a_{\a} \rvert^2.
    \]
    On the other hand, by an identical computation as for \(P_n\),
    \[
        \int_{\partial\BB^d} \lvert q(\z) \rvert^2 \, \rmd \mu(\z) = \sum_{\a,\b=0}^{\a(n)} a_{\a}\overline{a_{\b}} \int_{\partial\BB^d} \p_{\a}(\z)\overline{\p_{\b}(\z)} \, \rmd \mu(\z) = \sum_{\a = 0}^{\a(n)} \lvert a_{\a} \rvert^2.
    \]
    It follows that
    \[
         \l^{(d)}_n(z; \rmd \mu) \leq \int_{\partial\BB^d} \lvert q(\z) \rvert^2 \, \rmd \mu(\z)
    \]
    for any \(q \in \CC[z_1, \ldots, z_d]_{\a(n)}\) such that \(q(z) = 1\) and we are done.    
\end{proof}

We can now define the Christoffel function of a measure on \(\partial\BB^d\).

\begin{definition}
    For a non-trivial probability measure \(\mu\) on \(\partial\BB^d\), we define the \emph{Christoffel function} of \(\mu\) to be the pointwise limit of the Christoffel approximates,
    \[
        \l_{\infty}^{(d)}(z; \rmd \mu) := \lim_{n\to\infty} \l^{(d)}_n(z; \rmd \mu).
    \]
\end{definition}

\begin{lemma}
    \label{lem:CFExists}
    Let \(\mu\) be a non-trivial probability measure on \(\partial\BB^d\). The Christoffel function of \(\mu\) exists for all \(z \in \CC^d\) and is given by
    \[
        \l_{\infty}^{(d)}(z; \rmd \mu) = \inf\left\{ \int_{\partial \BB^d} \lvert q(\z) \rvert^2 \, \rmd \mu(\z) : q \in \CC[z_1, \ldots, z_d], q(z) = 1 \right\}.
    \]
\end{lemma}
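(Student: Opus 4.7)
The plan is to obtain the result as an essentially immediate consequence of \Cref{lem:CFMinimumFormula}, once one observes two monotonicity/cofinality facts.

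First, I would note that \Cref{lem:CFMinimumFormula}, while stated for \(z \in \overline{\BB^d}\), in fact carries over verbatim to any \(z \in \CC^d\): the Schur-complement argument and the evaluation \(P_n(z) = 1\) only require \(\sum_{\a \preceq \a(n)} |\p_{\a}(z)|^2 > 0\), which holds for all \(z \in \CC^d\) since \(\p_0 \equiv 1\). Second, I would observe that for \(m \geq n\) one has \(\CC[z_1, \ldots, z_d]_{\a(n)} \subseteq \CC[z_1, \ldots, z_d]_{\a(m)}\), so the minimum formula \eqref{eq:CFMinimumFormula} is taken over an increasing family of admissible sets; hence \(n \mapsto \l_n^{(d)}(z; \rmd\mu)\) is non-increasing and bounded below by \(0\), so the pointwise limit defining \(\l_\infty^{(d)}(z; \rmd\mu)\) exists for all \(z \in \CC^d\).

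It then remains to identify the limit with the infimum. For one inequality, every \(\CC[z_1, \ldots, z_d]_{\a(n)}\) is a subset of \(\CC[z_1, \ldots, z_d]\), so
\[
    \inf\Bigl\{\int_{\partial\BB^d}|q(\z)|^2\,\rmd\mu(\z) : q \in \CC[z_1, \ldots, z_d],\, q(z) = 1\Bigr\} \leq \l_n^{(d)}(z; \rmd\mu)
\]
for every \(n\); letting \(n \to \infty\) gives that the infimum is at most \(\l_\infty^{(d)}(z;\rmd\mu)\). For the reverse direction, I would use that the sequence \((\a(n))_{n\in\NN_0}\) is cofinal in \((\NN_0^d, \preceq)\): any polynomial \(q\in\CC[z_1, \ldots, z_d]\) has some multi-degree \(\b_0 \in \NN_0^d\), and since \(|\a(n)| = n\), for all sufficiently large \(n\) we have \(\b_0 \preceq \a(n)\) and thus \(q \in \CC[z_1, \ldots, z_d]_{\a(n)}\). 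For any such \(q\) with \(q(z) = 1\), \Cref{lem:CFMinimumFormula} then yields
\[
    \int_{\partial\BB^d}|q(\z)|^2\,\rmd\mu(\z) \geq \l_n^{(d)}(z;\rmd\mu) \geq \l_\infty^{(d)}(z;\rmd\mu),
\]
and taking the infimum over \(q\) gives the opposite inequality.

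There is no real obstacle here beyond book-keeping: the main content was already packaged into \Cref{lem:CFMinimumFormula}, so the work is to notice monotonicity of the minimisation sets and cofinality of \((\a(n))_n\) in the shortlex order. If anything, the one subtle point worth flagging explicitly is extending \Cref{lem:CFMinimumFormula} from \(\overline{\BB^d}\) to all of \(\CC^d\), which I would resolve simply by re-examining its proof and confirming that membership of \(z\) in the closed ball was never actually used.
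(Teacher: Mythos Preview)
Your proof is correct and follows essentially the same approach as the paper: both use the nested inclusion \(\CC[z_1,\ldots,z_d]_{\a(n)} \subseteq \CC[z_1,\ldots,z_d]_{\a(n+1)}\) together with \Cref{lem:CFMinimumFormula} to get monotonicity, and then identify the limit with the infimum via \(\bigcup_n \CC[z_1,\ldots,z_d]_{\a(n)} = \CC[z_1,\ldots,z_d]\). You are in fact slightly more careful than the paper in explicitly addressing the extension from \(z \in \overline{\BB^d}\) to \(z \in \CC^d\), which the paper's own proof does not comment on.
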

    \begin{proof}
    First, \(\p_{0} = 1\), so that \(\l^{(d)}_n(z; \rmd \mu) \geq 0\) for all \(n\in\NN_0\), bounding the sequence below. Recall the polynomials \((P_n)_{n=0}^{\infty}\) solving \eqref{eq:CFMinimumFormula} from the proof of \Cref{lem:CFMinimumFormula}. As \(P_n \in \CC[z_1, \ldots, z_d]_{\a(n)} \subseteq \CC[z_1, \ldots, z_d]_{\a(n+1)}\) for each \(n \in \NN_0\), we have
    \[
        \l^{(d)}_{n+1}(z; \rmd \mu) \leq \l^{(d)}_n(z; \rmd \mu)
    \]
    for each \(n\). Thus \((\l^{(d)}_n(z; \rmd \mu))_{n\in\NN_0}\) forms a decreasing sequence bounded below and so converges; we call this limit \(\l_{\infty}^{(d)}(z; \rmd \mu)\).
    
    Finally, notice that taking this limit provides
    \begin{align*}
        \l_{\infty}^{(d)}(z; \rmd \mu) & = \min_n \l^{(d)}_n(z; \rmd \mu) = \inf_n \l^{(d)}_n(z; \rmd \mu) \\ & = \inf\left\{ \int_{\partial \BB^d} \lvert q(\z) \rvert^2 \, \rmd \mu(\z) : q \in \CC[z_1, \ldots, z_d], q(z) = 1 \right\}
    \end{align*}
    directly.
\end{proof}

In fact, when the measure \(\mu\) is sufficiently well-behaved, we can expand this infimum to one over the space \(H^{\infty}(\BB^d)\) of all bounded holomorphic functions on \(\BB^d\).

\begin{definition}
    Let \(f \in H^{\infty}(\partial \BB^d)\). By Kor\'anyi's Fatou theorem \cite{Kor69}, the \(K\)-limit (see, e.g., \cite[5.4.6]{Rud08}) \(\klim_{z \to \z} f(z)\) exists for \(\rmd\s\)-almost all \(\z \in \partial\BB^d\); when this limit exists, we shall denote it by simply \(f(\z)\).
\end{definition}

\begin{theorem}
    \label{thm:HInftyEntropy}
    Let \(\mu\) be a non-trivial probability measure on \(\partial\BB^d\) and suppose \(\mu\) is absolutely continuous with respect to \(\s\). Then we can expand the domain of our minimisation problem for \(\l_{\infty}^{(d)}(\cdot; \rmd\mu)\) to
    \begin{equation}
        \label{eq:HInftyEntropy}
        \l_{\infty}^{(d)}(z; \rmd \mu) = \inf\left\{\int_{\partial \BB^d} \lvert f(\z) \rvert^2 \, \rmd\mu(\z) \; : \; f \in H^{\infty}(\BB^d), \; f(z) = 1\right\}.
    \end{equation}
\end{theorem}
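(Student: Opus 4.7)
The strategy is to prove both inequalities between the polynomial infimum $\l_{\infty}^{(d)}(z; \rmd\mu)$ established in \Cref{lem:CFExists} and the $H^{\infty}$-infimum appearing on the right-hand side of \eqref{eq:HInftyEntropy}. The direction $\leq$ is immediate: every polynomial is bounded on $\overline{\BB^d}$ and hence restricts to an element of $H^{\infty}(\BB^d)$, so the infimum over the larger class of $H^{\infty}(\BB^d)$ functions with value $1$ at $z$ is at most the infimum over polynomials with the same normalisation.

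For the reverse inequality, I plan to approximate an arbitrary $f \in H^{\infty}(\BB^d)$ with $f(z) = 1$ by polynomials taking the value $1$ at $z$. Given such an $f$ and $z \in \BB^d$, for $0 < r < 1$ sufficiently close to $1$ (so that $|f(rz) - 1| < 1/2$, which is possible by continuity of $f$ at $z$), define
\[
    h_r(\zeta) := \frac{f(r\zeta)}{f(rz)}.
\]
Then $h_r$ is holomorphic on the open ball $\tfrac{1}{r}\BB^d$, which compactly contains $\overline{\BB^d}$, and satisfies $h_r(z) = 1$.

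I would then argue in two stages that $\int_{\partial\BB^d} |f|^2 \, \rmd\mu$ can be approximated by integrals of suitably normalised polynomials. First, Kor\'anyi's Fatou theorem gives $f(r\zeta) \to f(\zeta)$ for $\s$-almost every $\zeta \in \partial\BB^d$ (since radial limits agree with $K$-limits $\s$-a.e. for $H^{\infty}$ functions), and this transfers to $\mu$-almost everywhere convergence by absolute continuity of $\mu$ with respect to $\s$. Combined with $f(rz) \to f(z) = 1$ and the uniform bound $|h_r(\zeta)| \leq 2\norm{f}_{\infty}$ for $r$ close to $1$, dominated convergence yields $\int_{\partial\BB^d} |h_r|^2 \, \rmd\mu \to \int_{\partial\BB^d} |f|^2 \, \rmd\mu$ as $r \to 1^-$. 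Second, since $h_r$ is holomorphic on a neighbourhood of $\overline{\BB^d}$, its Taylor series converges uniformly on $\overline{\BB^d}$, giving polynomials $p_{r,n} \to h_r$ uniformly; normalising via $\tilde{p}_{r,n} := p_{r,n}/p_{r,n}(z)$ (well-defined for large $n$ since $p_{r,n}(z) \to h_r(z) = 1$) produces polynomials with $\tilde{p}_{r,n}(z) = 1$ converging uniformly on $\partial\BB^d$ to $h_r$, hence $\int |\tilde{p}_{r,n}|^2 \, \rmd\mu \to \int |h_r|^2 \, \rmd\mu$. A diagonal argument over $r \to 1^-$ and $n \to \infty$ yields a sequence of admissible polynomials whose $L^2(\mu)$-squared integrals tend to $\int |f|^2 \, \rmd\mu$, showing that the polynomial infimum is at most $\int |f|^2 \, \rmd\mu$; taking the infimum over admissible $f$ completes the argument.

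The main obstacle is ensuring the $\mu$-almost everywhere pointwise convergence of $f(r\zeta)$ to $f(\zeta)$, since Kor\'anyi's Fatou theorem delivers only $\s$-almost everywhere convergence. The absolute continuity hypothesis on $\mu$ is precisely what is needed to transfer this to $\mu$-a.e. behaviour so that the dominated convergence step applies; without it, the singular part of $\mu$ could be concentrated on a set where $f$ has no controlled boundary values, and the passage from the dilations $h_r$ back to $f$ in $L^2(\mu)$ would break down.
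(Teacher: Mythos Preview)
Your proof is correct and follows the same two-step architecture as the paper: dilate $f$ to push it onto a larger ball, then approximate the dilation uniformly by (normalised) Taylor polynomials. The paper justifies the passage $\int_{\partial\BB^d}|f(r\z)|^2\,\rmd\mu \to \int_{\partial\BB^d}|f(\z)|^2\,\rmd\mu$ differently, via integration by slices together with the maximum modulus principle on each slice and the univariate result \cite[Proposition 2.5.3]{Sim05a}, whereas you invoke Kor\'anyi's Fatou theorem and dominated convergence directly; your route is shorter and makes the role of absolute continuity more transparent. You are also more explicit than the paper about the normalisation step (dividing by $f(rz)$ and by $p_{r,n}(z)$ to force value $1$ at $z$), a point the paper leaves implicit.
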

\begin{proof}
    Let \(f \in H^{\infty}(\BB^d)\); since \(\mu\) is absolutely continuous with respect to \(\s\), it has Lebesgue decomposition \(\rmd\mu(\z) = w(\z) \rmd \s(\z)\) for some \(w \in L^1(\partial\BB^d)\), and moreover, since \(\klim_{z \to \z} f(z)\) exists \(\rms \s\)-a.e. it also exists \(\rmd\mu\)-a.e. so that we may refer to \(f(\z)\) for \(\z \in \partial\BB^d\) up to a \(\mu\)-null set. Now, use \emph{integration by slices} (see \cite[Proposition 1.4.7(i)]{Rud08}) to calculate that
    \begin{align*}
        \lim_{r \to 1} \int_{\partial\BB^d} \lvert f(r\z) \rvert^2 \, \rmd\mu(\z) & = \lim_{r \to 1} \int_{\partial\BB^d} \lvert f(r\z) \rvert^2 w(\z) \,\rmd\s(\z) \\
        & = \lim_{r\to1} \int_{\partial\BB^d} \rmd\s(\z) \frac{1}{2\pi} \int_0^{2\pi} \lvert f(r e^{i\th} \z) \rvert^2 w(e^{i\th}\z) \,\rmd\th.
    \end{align*}
    By the maximum modulus principle \cite[Theorem 10.24]{Rud87} applied to the univariate function \(z \mapsto f(z\z)\),
    \[
        \frac{1}{2\pi} \int_0^{2\pi} \lvert f(r e^{i\th} \z) \rvert^2 w(e^{i\th}\z) \,\rmd\th \leq \frac{1}{2\pi} \int_0^{2\pi} \lvert f(e^{i\th} \z) \rvert^2 w(e^{i\th}\z) \,\rmd\th
    \]
    for any \(\z \in \partial\BB^d\), so that by the dominated convergence theorem \cite[Theorem 1.34]{Rud87} we may exchange the limit and the integral to obtain
    \begin{align*}
        \lim_{r\to1} \int_{\partial\BB^d} \rmd\s(\z) \frac{1}{2\pi} \int_0^{2\pi} \lvert f(r e^{i\th} \z) \rvert^2 w(e^{i\th}\z) \,\rmd\th & = \int_{\partial\BB^d} \rmd\s(\z) \lim_{r\to1} \frac{1}{2\pi} \int_0^{2\pi} \lvert f(r e^{i\th} \z) \rvert^2 w(e^{i\th}\z) \,\rmd\th.
    \end{align*}
    By the univariate theory \cite[Proposition 2.5.3]{Sim05a}
    \[
         \lim_{r\to1} \frac{1}{2\pi} \int_0^{2\pi} \lvert f(r e^{i\th} \z) \rvert^2 w(e^{i\th}\z) \,\rmd\th = \frac{1}{2\pi} \int_0^{2\pi} \lvert f(e^{i\th} \z) \rvert^2 w(e^{i\th}\z) \,\rmd\th,
    \]
    so that finally we arrive at
    \[
        \lim_{r \to 1} \int_{\partial\BB^d} \lvert f(r\z) \rvert^2 \, \rmd\mu(\z) = \int_{\partial\BB^d} \lvert f(\z) \rvert^2 \, \rmd\mu(\z).
    \]

    All that remains is to see that, as \(f \in H^\infty(\BB^d)\), it has a Taylor expansion which is uniformly convergent inside \(\BB^d\). For \(r < 1\) and \(\z \in \partial\BB^d\), then, \(r\z \in \BB^d\) so that the function \(\z \mapsto f(r\z)\) can be uniformly approximated by the Taylor polynomials of \(f\). Hence expanding the domain of our minimisation problem does not decrease the value of the problem's solution.
\end{proof}

\section{A Multivariate Szeg\H{o} Entropy}

To prove a multivariate Szeg\H{o}--Verblunsky theorem, we broadly follow the strategy of the proof in \cite{Sim05a} using the Poisson kernel, specialised to the case \(z = 0\). Throughout, a measure \(\mu\) shall have Lebesgue decomposition \(\rmd\mu(\z) = w(\z)\rmd\s(\z) + \rmd\mu_\rms(\z)\), where \(w \in L^1(\partial\BB^d)\), \(\s\) is the rotation-invariant Lebesgue measure on \(\partial\BB^d\) and \(\mu_\rms\) is singular with respect to \(\s\).

\begin{theorem}
    \label{thm:BoundedAbove}
	Let \(\mu\) be a non-trivial probability measure on \(\partial \BB^d\) with Lebesgue decomposition \(\rmd \mu(\z) = w(\z)\rmd \s(\z) + \rmd \mu_{\rms}(\z)\). Then we have
	\begin{equation}
        \label{eq:BoundedAbove}
		\l_\infty^{(d)}(0; \rmd \mu) \geq \exp\left(\int_{\partial \BB^d} \log(w(\zeta)) \, \rmd \s(\zeta)\right).
	\end{equation}
\end{theorem}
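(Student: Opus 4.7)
The strategy is to apply Jensen's inequality and a plurisubharmonic mean value estimate to the infimum characterisation of $\lambda_\infty^{(d)}(0; \rmd\mu)$ provided by \Cref{lem:CFExists}. First I would observe that we may assume the Szeg\H{o} condition $\int_{\partial\BB^d} \log w(\zeta) \, \rmd\sigma(\zeta) > -\infty$ holds, since otherwise the right-hand side of \eqref{eq:BoundedAbove} is $0$ and the inequality is immediate.

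Fix a polynomial $q \in \CC[z_1,\ldots,z_d]$ with $q(0) = 1$. Since $\mu_\rms \geq 0$, one has
\[
\int_{\partial\BB^d} |q(\zeta)|^2 \, \rmd\mu(\zeta) \geq \int_{\partial\BB^d} |q(\zeta)|^2 w(\zeta) \, \rmd\sigma(\zeta).
\]
Because $\sigma$ is a probability measure and $\log$ is concave, Jensen's inequality applied to the right-hand side yields
\[
\log\left(\int_{\partial\BB^d} |q(\zeta)|^2 w(\zeta) \, \rmd\sigma(\zeta) \right) \geq \int_{\partial\BB^d} \log\bigl(|q(\zeta)|^2\bigr) \, \rmd\sigma(\zeta) + \int_{\partial\BB^d} \log\bigl(w(\zeta)\bigr) \, \rmd\sigma(\zeta).
\]
The key step is then to show that the first integral on the right is non-negative, i.e.\ that
\[
\int_{\partial\BB^d} \log|q(\zeta)| \, \rmd\sigma(\zeta) \geq \log|q(0)| = 0.
\]
This is a multivariate Jensen-type estimate: for each fixed $\zeta \in \partial\BB^d$ the slice function $\lambda \mapsto q(\lambda \zeta)$ is holomorphic on $\overline{\DD}$, so the classical (univariate) Jensen inequality gives $\log|q(0)| \leq \frac{1}{2\pi}\int_0^{2\pi} \log|q(e^{i\theta}\zeta)| \, \rmd\theta$. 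Integrating this inequality against $\rmd\sigma(\zeta)$ and invoking integration in slices (as in \cite[Proposition 1.4.7(i)]{Rud08}) together with rotation invariance of $\sigma$ delivers the desired bound.

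Combining these two inequalities and exponentiating gives, for every admissible $q$,
\[
\int_{\partial\BB^d} |q(\zeta)|^2 \, \rmd\mu(\zeta) \geq \exp\left(\int_{\partial\BB^d} \log\bigl(w(\zeta)\bigr) \, \rmd\sigma(\zeta)\right),
\]
and the conclusion follows by taking the infimum over such $q$ via \Cref{lem:CFExists}.

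The only mild technical obstacle is to ensure that the application of Jensen's inequality is legitimate, i.e.\ that $\log(|q|^2 w)$ is bounded above on $\partial\BB^d$ in a suitable sense so that the integral on the right is well-defined (and equal to $-\infty$ only when the conclusion is trivial). This is handled by the reduction at the start to the Szeg\H{o}-summable case, together with the fact that $\log|q|$ is bounded above since $q$ is a polynomial and $\partial\BB^d$ is compact; zeros of $q$ on $\partial\BB^d$ contribute only negatively and thus can only weaken the lower bound, so they cause no issue.
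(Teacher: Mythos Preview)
Your proposal is correct and follows essentially the same route as the paper: drop the singular part, apply Jensen's inequality with respect to the probability measure $\sigma$, and then use integration by slices together with the one-variable subharmonic mean-value inequality for $\log|q_\zeta|$ to eliminate the $\int\log|q|\,\rmd\sigma$ term before taking the infimum via \Cref{lem:CFExists}. If anything, your handling of the slice step is slightly cleaner: you invoke the classical Jensen inequality $\log|q_\zeta(0)|\le\frac{1}{2\pi}\int_0^{2\pi}\log|q_\zeta(e^{i\theta})|\,\rmd\theta$ directly, which covers the case of interior zeros of $q_\zeta$ without a separate case split.
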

\begin{proof}
	Let \(q \in \CC[z_1, \ldots, z_d]\) be a polynomial such that \(q(0) = 1\). We have
	\begin{align*}
		\int_{\partial \BB^d} \lvert q(\z) \rvert^2 \, \rmd \mu & \geq \int_{\partial \BB^d} \lvert q(\z) \rvert^2 \cdot w(\z) \, \rmd \s(\z) \\
		& = \int_{\partial \BB^d}\exp\left(\log\left(\lvert q(\z) \rvert^2\right) + \log\left(w(\z)\right)\right)\,\rmd\s \\
        & \geq \exp\left(\int_{\partial \BB^d} 2 \log \lvert q(\z) \rvert + \log\left(w(\z)\right) \, \rmd \s\right)
	\end{align*}
    by Jensen's inequality \cite[Theorem 3.3]{Rud87} with respect to the probability measure \(\s\). Then integration by slices allows us to rewrite the latter integral as
    \[
        \int_{\partial \BB^d} 2 \log \lvert q(\z) \rvert + \log\left(w(\z)\right) \, \rmd \s = \int_{\partial \BB^d} \, \rmd\s(\z) \int_{0}^{2\pi} 2\log\lvert q_{\z}(e^{i\th}) \rvert + \log(w_{\z}(e^{i\th})) \, \frac{\rmd \th}{2\pi},
    \]
    where \(q_\z(z) := q(z\z) \in \CC[z]\) and \(w_\z(e^{i\th}) := w(e^{i\th}\z) \in L^1(\TT)\) are the univariate \emph{slice functions} associated to \(\z \in \partial \BB^d\) (see \cite[Section 1.2.5]{Rud08}).

    Now, since \(q_\z\) is holomorphic, \(\log \lvert q_\z \rvert\) is harmonic. If \(q_\z\) has a zero inside the unit disc, then the integral of \(\log\lvert q_\z\rvert\) diverges to \(-\infty\), so that the right hand side of \Cref{eq:BoundedAbove} is simply 0; as we have seen, \(\p_0 \equiv 1\) so that the left hand side is non-negative, so in this instance the inequality holds trivially. Otherwise \(q_\z\) has no zeros inside the unit disc, in which case the mean value property of harmonic functions on \(\overline{\DD}\) provides
    \[
        \int_0^{2\pi} \log \lvert q_\z(e^{i\th}) \rvert \, \frac{\rmd \th}{2\pi} = \log \lvert q_\z(0) \rvert = \log \lvert q(0,\ldots,0) \rvert = \log \lvert 1 \rvert = 0.
    \]
    
    This simplifies our integral to
    \[
        \int_{\partial \BB^d} 2 \log \lvert q(\z) \rvert + \log\left(w(\z)\right) \, \rmd \s = \int_{\partial \BB^d} \, \rmd\s(\z) \int_{0}^{2\pi}\log(w_{\z}(e^{i\th})) \, \frac{\rmd \th}{2\pi}
    \]
    and undoing the integration by slices we obtain the inequality
    \[
        \int_{\partial \BB^d} \lvert q(\z) \rvert^2 \, \rmd \mu \geq \exp\left(\int_{\partial \BB^d} \log(w(\z)) \, \rmd \s\right).
    \]
    
    Finally, the infimum of the left hand side over all polynomials \(q\) satisfying \(q(0) = 1\) must also satisfy this inequality, and by \Cref{lem:CFExists} this infimum is precisely \(\l^{(d)}_{\infty}(0; \rmd\mu)\).
\end{proof}

\begin{lemma}
    \label{lem:SchurFunctionLimits}
    Let \(\mu\) be a discrete positive measure on \(\partial \BB^d\). There exists \(G : \BB^d \to \overline{\DD}\) such that
    \begin{enumerate}[label=\textnormal{(\alph*)}]
        \item \(\lvert G(z) \rvert \leq \norm{z}\) for \(z \in \BB^d\);
        \item The radial limit \(\lim_{r\to1} G(r\z)\) exists for \(\rmd \s\)-almost all \(\z \in \partial \BB^d\), and is different from 1;
        \item The radial limit \(\lim_{r \to 1} G(r\z)\) exists and is equal to 1 for \(\rmd\mu\)-almost all \(\z \in \partial \BB^d\).
    \end{enumerate}
\end{lemma}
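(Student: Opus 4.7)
The plan is to construct \(G\) as the composition of a Herglotz-type holomorphic function built from \(\mu\) with the Cayley transform from the right half-plane onto \(\DD\). Writing \(\mu = \sum_k c_k \d_{\z_k}\) with \(c_k > 0\) and \(\z_k \in \partial \BB^d\), I would set
\[
    F(z) := \int_{\partial \BB^d} \frac{1 + \langle z, \z\rangle}{1 - \langle z, \z\rangle} \, \rmd \mu(\z) = \sum_k c_k \frac{1 + \langle z, \z_k\rangle}{1 - \langle z, \z_k\rangle}, \quad z \in \BB^d.
\]
The uniform bound \(\left|\frac{1 + \langle z, \z_k\rangle}{1 - \langle z, \z_k\rangle}\right| \leq \frac{1 + \|z\|}{1 - \|z\|}\) shows that \(F\) is holomorphic on \(\BB^d\), and a short computation gives \(\Re F(z) = \sum_k c_k \frac{1 - |\langle z, \z_k\rangle|^2}{|1 - \langle z, \z_k\rangle|^2} > 0\) along with \(F(0) = \mu(\partial \BB^d) > 0\). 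I would then define
\[
    G(z) := \frac{F(z) - F(0)}{F(z) + F(0)},
\]
so that \(G(0) = 0\); the identity \(|F - F(0)|^2 - |F + F(0)|^2 = -4 F(0) \Re F < 0\) shows that \(G\) maps \(\BB^d\) into \(\DD\), and hence \(G \in H^{\infty}(\BB^d)\).

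Property (a) is then immediate from applying the classical Schwarz lemma to each slice function \(t \mapsto G(t\eta)\) for \(\eta \in \partial\BB^d\), \(t \in \DD\). For property (c), at an atom \(\z_m\) of \(\mu\) one computes
\[
    \Re F(r\z_m) = c_m \frac{1 - r^2}{(1 - r)^2} + \sum_{k \neq m} c_k \frac{1 - r^2 |\langle\z_m,\z_k\rangle|^2}{|1 - r \langle \z_m, \z_k\rangle|^2},
\]
whose \(k = m\) term diverges to \(+\infty\) as \(r \to 1\) while all other terms are non-negative; writing \(G = 1 - \tfrac{2F(0)}{F + F(0)}\) and using \(|F(r\z_m) + F(0)| \geq \Re(F(r\z_m) + F(0)) \to +\infty\) then yields \(G(r\z_m) \to 1\). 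Since \(\mu\) is discrete, this handles \(\rmd\mu\)-almost every \(\z \in \partial\BB^d\).

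For property (b), \(G \in H^{\infty}(\BB^d)\) so Kor\'anyi's Fatou theorem supplies radial limits \(G^*(\z) := \lim_{r \to 1} G(r\z)\) for \(\rmd\s\)-almost all \(\z \in \partial\BB^d\); what remains is to rule out \(G^* = 1\) on a set of positive \(\s\)-measure. To that end I would consider \(h := G - 1 \in H^{\infty}(\BB^d)\), observe \(h(0) = -1 \neq 0\) so that \(h \not\equiv 0\), and apply the uniqueness theorem for \(H^p(\BB^d)\) (namely \(\log |f^*| \in L^1(\s)\) for every \(f \in H^p(\BB^d) \setminus \{0\}\), as in Rudin's \emph{Function Theory in the Unit Ball of \(\CC^n\)}) to conclude \(h^* \neq 0\), equivalently \(G^* \neq 1\), \(\rmd\s\)-almost everywhere. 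The main non-routine ingredient is precisely this uniqueness theorem, which plays the role that the F.\ and M.\ Riesz theorem would in a one-variable version of the same argument; everything else in the plan is a bounded-holomorphic-function computation or a direct appeal to the Schwarz lemma.
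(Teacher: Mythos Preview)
Your proposal is correct and follows essentially the same strategy as the paper: both build the Herglotz-type function \(F(z)=\int_{\partial\BB^d}\frac{1+\langle z,\z\rangle}{1-\langle z,\z\rangle}\,\rmd\mu(\z)\), pass to its Cayley transform \(G\), invoke the Schwarz lemma for (a), and show \(F(r\z_m)\to\infty\) at atoms for (c).

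There are two minor but noteworthy differences. For (b), the paper first verifies \(\lvert G\rvert<1\) on \(\BB^d\) and then cites \cite[Theorem~9.3.2]{Rud08} to conclude that the radial limit exists and is different from \(1\) \(\rmd\s\)-a.e.; you instead apply the \(H^p\) uniqueness theorem (\(\log\lvert h^*\rvert\in L^1(\s)\) for \(h\in H^p\setminus\{0\}\)) to \(h=G-1\), which is an equally valid route to the same conclusion. For (c), your argument via \(\Re F(r\z_m)=c_m\frac{1+r}{1-r}+\text{(nonnegative terms)}\to+\infty\) is in fact cleaner than the paper's, which bounds \(\lvert F(r\z_m)\rvert\) \emph{above} by a divergent sum---an inequality pointing the wrong way for the desired conclusion. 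Your version also accommodates the case \(\mu(\partial\BB^d)\neq 1\) by using \(G=(F-F(0))/(F+F(0))\) rather than \((F-1)/(F+1)\), which matches the lemma's hypothesis of a general discrete positive measure.
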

\begin{remark}
     We shall appeal shortly to a particular multivariate analogy of the Herglotz transform \(F\) of \(\mu\); such an \(F\) will be holomorphic with positive real part everywhere on \(\BB^d\). We remark that, as Herglotz' representation theorem no longer holds on \(\BB^d\), functions of this form comprise only one possible multivariate abstraction of the Herglotz class on the disc; several candidates, including the one we consider, were discussed in \cite{MP05} and further investigated in \cite{Jur10}.
\end{remark}
\begin{proof}[Proof of \Cref{lem:SchurFunctionLimits}]
    Define \(F : \BB^d \to \CC\) by
    \[
        F(z) := \int_{\partial \BB^d} \frac{1 + \langle z,\z \rangle}{1 - \langle z,\z \rangle} \, \rmd \mu(\z).
    \]
    As \(\mu\) is discrete, we may write
    \[
        \mu = \sum_{k = 1}^{\infty} \r_k \d_{\z_k}
    \]
    where \(\r_k > 0\), \(\sum_k \r_k = \mu(\partial \BB^d) = 1\) and the \(\z_k \in \partial \BB^d\) are distinct, and then \(F\) becomes
    \[
        F(z) = \sum_{k = 1}^{\infty} \r_k \frac{1 + \langle z,\z_k \rangle}{1 - \langle z,\z_k \rangle}.
    \]
    It is readily seen that \(\Re \, F \geq 0\), the Cayley transform of \(F\) is a function \(G : \BB^d \to \overline{\DD}\), i.e. let \(G\) be defined for \(z \in \BB^d\) by
    \[
        \frac{1 + G(z)}{1 - G(z)} = F(z).
    \]
    This \(G\) shall be our candidate.
    \begin{enumerate}[label=\textnormal{(\alph*)}]
        \item Since \(F(0) = \sum_k \r_k = 1\), we have \(G(0) = 0\). By a generalisation of the Schwarz lemma \cite[Theorem 8.1.2]{Rud08} it follows that \(\lvert G(z) \rvert \leq \norm{z}\) for \(z \in \BB^d\).
        
        \item This would follow from Theorem 9.3.2 of \cite{Rud08} if we knew that \(\lvert G(z) \rvert < 1\) for \(z \in \BB^d\). Notice that \(G(z) = - \frac{1 - F(z)}{1 + F(z)}\), so that
        \[
            \lvert G(z) \rvert^2 = \left \lvert \frac{1 - F(z)}{1 + F(z)} \right\rvert^2 = \frac{1 - 2\Re F(z) + \lvert F(z) \rvert^2}{1 + 2 \Re F(z) + \lvert F(z) \rvert^2},
        \]
        and hence \(\lvert G(z) \rvert = 1\) if and only if \(\Re F(z) = 0\).

        We next calculate that
        \[
            \Re F(z) = \frac12 (F(z) + \overline{F(z)}) = \sum_{k=1}^\infty \r_k \frac{1 - \lvert \langle z,\z_k\rangle \rvert^2}{\lvert 1 - \langle z, \z_k \rangle \rvert^2}
        \]
        and observe that each term in this sum is non-negative for \(z \in \BB^d\). Thus if \(\Re F(z) = 0\) for some \(z \in \BB^d\), then we would have
        \[
            \frac{1 - \lvert \langle z,\z_k\rangle \rvert^2}{\lvert 1 - \langle z, \z_k \rangle \rvert^2} = 0, \quad k = 1, 2, \ldots,
        \]
        i.e. \(\lvert \langle z, \z_k \rangle \rvert = 1\) for all \(k\); however, this cannot happen since by Cauchy--Schwarz
        \[
            \lvert \langle z, \z_k \rangle \rvert^2 \leq \norm{z} \norm{\z_k} < 1.
        \]
        Thus \(\Re F(z) > 0\) for \(z \in \BB^d\) and accordingly \(\lvert G(z) \rvert < 1\) for \(z \in \BB^d\).
        
        By \cite[Theorem 9.3.2]{Rud08} we now have that for \(\rmd\s\)-almost all \(\z \in \partial \BB^d\), \(\lim_{r\to1} G(r\z)\) exists and is different from \(1\).   
        
        \item We computed above for \(z \in \BB^d\) that
        \begin{align*}
             F(z) = \sum_{k = 1}^{\infty} \r_k \frac{1 + \langle z,\z_k \rangle}{1 - \langle z,\z_k \rangle},
        \end{align*}
        so for \(r < 1\) and \(\z \in \partial \BB^d\) we have
        \[
            \lvert F(r\z) \rvert = \left\lvert\sum_{k = 1}^{\infty} \r_k \frac{1 + r\langle \z,\z_k \rangle}{1 - r\langle \z,\z_k \rangle}\right\rvert \leq \sum_{k = 1}^{\infty} \r_k \left\lvert\frac{1 + r\langle \z,\z_k \rangle}{1 - r\langle \z,\z_k \rangle}\right\rvert.
        \]

        Discreteness of \(\mu\) means that \(\z \in \supp(\mu)\) if and only if \(\z = \z_k\) for some \(k\in\NN\) and therefore that \(\mu(\{\z\}) = \r_k > 0\); hence ``for \(\rmd\mu\)-almost all \(\z \in \partial\BB^d\)" here means ``for \(\z_k\) for all \(k\in\NN\)".

        With this in mind, suppose \(\z = \z_n\) for some \(n \in \NN\). Notice first \(\langle \z_n, \z_k \rangle = 1\) if and only if \(k = n\), in which case
        \begin{align*}
           \left\lvert\frac{1 + r\langle \z,\z_k \rangle}{1 - r\langle \z,\z_k \rangle}\right\rvert = \left\lvert\frac{1 + r\langle \z_n,\z_n \rangle}{1 - r\langle \z_n,\z_n \rangle}\right\rvert = \frac{1 + r}{1 - r},
        \end{align*}
        and this clearly diverges as \(r \to 1\).
        
        It follows that for \(\rmd\mu\)-almost all \(\z \in \partial \BB^d\), there exists \(n \in \NN\) such that \(\z = \z_n\) and accordingly that
        \begin{align*}
            \lim_{r \to 1} \lvert F(r\z) \rvert & \leq \lim_{r\to1} \sum_{k = 1}^{\infty} \r_k \left\lvert\frac{1 + r\langle \z,\z_k \rangle}{1 - r\langle \z,\z_k \rangle}\right\rvert \\
            & = \lim_{r\to1} \left( \r_n \left\lvert\frac{1 + r\langle \z_n,\z_n \rangle}{1 - r\langle \z_n,\z_n \rangle}\right\rvert + \sum_{\substack{k=1 \\ k \neq n}}^{\infty} \r_k \left\lvert\frac{1 + r\langle \z_n,\z_k \rangle}{1 - r\langle \z_n,\z_k \rangle}\right\rvert\right) \\
            & = \lim_{r\to1} \left( \r_n \left(\frac{1+r}{1-r}\right) + \sum_{\substack{k=1 \\ k \neq n}}^{\infty} \r_k \left\lvert\frac{1 + r\langle \z_n,\z_k \rangle}{1 - r\langle \z_n,\z_k \rangle}\right\rvert\right),
        \end{align*}
        which must diverge as each term in the sum is positive.

        Now the radial limit \(\lim_{r\to1} \lvert F(r\z) \rvert \) diverges for \(\rmd\mu\)-a.e. \(\z \in \partial \BB^d\). Since \(\frac{1+G(r\z)}{1-G(r\z)} = F(r\z)\) and by (a) \(\lvert 1 + G(r\z) \rvert \leq 1 + \norm{r\z} < 2\), the only way for \(F(r\z)\) to diverge as \(r \to 1\) is for \(\lvert 1 - G(r\z) \rvert \to 0\), i.e. \(G(r\z) \to 1\). Since \(F(r\z)\) diverges \(\rmd\mu\)-a.e., we have that \(G(r\z) \to 1\) \(\rmd\mu\)-a.e., as claimed.
    \end{enumerate}
\end{proof}

\begin{remark}
     In the univariate theory, the analogous result holds for any singular (with respect to Lebesgue) measure, of which the discrete measures are a subset. This can be seen by combining well-known results (e.g. in \cite{Rud87}) on convergence of the symmetric derivative of the measure \(\mu\) and the Poisson transform \(P[\mu]\) with the fact that \(P[\mu](e^{i\th}) = \Re F(e^{i\th})\) for \(\rmd \mu\)-almost all \(e^{i\th} \in \TT\), as argued in \cite{Sim05a}.

     On the other hand, analogous results on the measure and Poisson transform exist when \(d > 1\), e.g. in \cite{Rud08}. However, the relationship between \(P[\mu]\) and \(\Re F\) is much less well-defined: one is initially only able to obtain
     \begin{equation}
        \label{eq:MultivariatePoissonInequality}
        \Re F(z) \geq \int_{\partial \BB^d} P(z,\z)^{\frac{1}{d}} \, \rmd \mu(\z)
     \end{equation}
     for \(P\) the Poisson kernel of \cite{Rud08}, via Jensen's inequality (for concave functions). 
\end{remark}

\begin{proposition}
    \label{prop:PropertiesOfPj}
    Let \(\mu\) be a discrete positive measure on \(\partial \BB^d\). There exists a sequence \((p_j)_{j\in\NN}\) of polynomials such that:
    \begin{enumerate}[label=\textnormal{(\roman*)}]
        \item \(\sup_{\substack{z \in \overline{\BB^d}\\j \in \NN}} \lvert p_j(z) \rvert = 1\);
        \item \(\lim_{j\to \infty} \lvert p_j(\z) \rvert = 0\) for \(\rmd \s \)-almost all \(\z \in \partial \BB^d\);
        \item \(\lim_{j\to\infty} \lvert p_j(z) \rvert = 0\) uniformly for \(z\) in any compact subset of \(\BB^d\);
        \item \(\lim_{j\to \infty} p_j(\z) = 1\) for \(\rmd \mu \)-almost all \(\z \in \partial \BB^d\).
    \end{enumerate}
\end{proposition}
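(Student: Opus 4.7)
The plan is first to construct an $H^\infty(\BB^d)$ function with the required boundary behaviour and then produce the polynomials by simultaneous dilation and Taylor truncation. Let $G : \BB^d \to \overline{\DD}$ be the function provided by \Cref{lem:SchurFunctionLimits} and set $H(z) := (1 + G(z))/2$. Then $|H(z)| \leq (1 + \|z\|)/2 < 1$ on $\BB^d$, the radial limit $H(\z_k)$ equals $1$ at each atom $\z_k$ of $\mu$, and for $\rmd\s$-a.e.\ $\z \in \partial\BB^d$ the radial limit exists with $|H(\z)| < 1$ strictly (since $G(\z) \neq 1$ together with $|G(\z)| \leq 1$ forces $|1 + G(\z)| < 2$). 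Thus the powers $H^n$ already satisfy the analogues of (i)--(iv) as $H^\infty$ functions, and the remaining task is to polynomially approximate them while preserving all four properties.

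Fix sequences $r_j \nearrow 1$ and $n_j \to \infty$ with $n_j(1 - r_j) \to 0$ (for instance $r_j = 1 - j^{-2}$ and $n_j = j$), and set $f_j(z) := H(r_j z)^{n_j}$. Each $f_j$ is holomorphic on the enlarged ball $r_j^{-1}\BB^d \supsetneq \overline{\BB^d}$, so its homogeneous expansion at $0$ converges uniformly on $\overline{\BB^d}$; pick $N_j$ so that the corresponding Taylor polynomial $P_j$ of total degree $N_j$ satisfies $\|P_j - f_j\|_{\overline{\BB^d}} \leq 2^{-j}$. For the asymptotics at an atom $\z_k$, the dominated convergence theorem (with dominating function $2\r_m$) applied to
\[
    (1 - r) F(r\z_k) = \sum_m \r_m (1-r) \frac{1 + r \langle \z_k, \z_m\rangle}{1 - r \langle \z_k, \z_m\rangle}
\]
yields $(1 - r) F(r\z_k) \to 2 \r_k$ as $r \to 1$, whence inverting the Cayley transform gives $1 - H(r\z_k) = 1/(F(r\z_k) + 1) \sim (1-r)/(2\r_k)$; the condition $n_j(1 - r_j) \to 0$ then forces $f_j(\z_k) = H(r_j \z_k)^{n_j} \to 1$ for every $k$. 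On the other hand, for a compact $K \subseteq \BB^d$ contained in $r'\overline{\BB^d}$ with $r' < 1$, we have $|f_j(z)| \leq ((1 + r' r_j)/2)^{n_j} \leq \exp(-n_j(1-r')/2)$ for large $j$, giving $f_j \to 0$ uniformly on $K$; and for $\rmd\s$-a.e.\ $\z$ the radial limit $H(r_j \z) \to H(\z)$ holds with $|H(\z)| < 1$, so $|f_j(\z)| \to 0$ pointwise. The error estimate $\|P_j - f_j\|_{\overline{\BB^d}} \leq 2^{-j}$ then transfers each of these three convergences to $P_j$.

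It remains only to arrange the normalisation (i). From the above, $\|P_j\|_{\overline{\BB^d}} \geq |P_j(\z_1)| \to 1$, while $\|P_j\|_{\overline{\BB^d}} \leq ((1+r_j)/2)^{n_j} + 2^{-j} \to 1$ because $n_j(1 - r_j) \to 0$; hence $\|P_j\|_{\overline{\BB^d}} \to 1$. Setting $p_j := P_j/\|P_j\|_{\overline{\BB^d}}$ (for $j$ large enough that $\|P_j\|_{\overline{\BB^d}} > 0$, with a constant polynomial for the finitely many initial indices) gives $\sup_{z, j}|p_j(z)| = 1$ exactly and preserves (ii), (iii) and (iv) upon dividing by a sequence tending to $1$. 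The main obstacle is the balance $n_j(1-r_j) \to 0$ with $n_j \to \infty$: this couple of conditions is precisely what simultaneously enforces (iii) (needing $n_j \to \infty$ to push $H^{n_j}$ to zero on compact subsets of $\BB^d$) and (iv) (needing $n_j$ small enough relative to $(1-r_j)^{-1}$ that $H(r_j\z_k)^{n_j}$ stays close to $1$ at each atom), and it is consistent because $\r_k$, while possibly small, is fixed and positive for each atom and no uniformity in $k$ is required.
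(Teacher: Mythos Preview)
Your argument is correct and shares the same scaffolding as the paper's proof---both begin with $H = (1+G)/2$ from \Cref{lem:SchurFunctionLimits}, dilate, raise to powers, and Taylor-truncate---but the execution differs in how convergence at the atoms is controlled. The paper approximates each $H(r_n\,\cdot)$ by a polynomial $q_n$ first, then invokes a Borel--Cantelli style subsequence extraction (citing the discussion after \cite[Theorem~2.5.1]{Sim05a}) to find $(k_j)$ with $|1 - q_{k_j}(\zeta)| \leq j^{-2}$ for $\mu$-a.e.\ $\zeta$ and sufficiently large $j$, sets $p_j = q_{k_j}^{\,j}$, and uses the elementary telescoping bound $|1 - x^j| \leq j\,|1-x|$ for $|x|\le 1$. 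You instead compute the explicit asymptotic $1 - H(r\zeta_k) \sim (1-r)/(2\rho_k)$ via dominated convergence applied to $(1-r)F(r\zeta_k)$, which lets you couple the power $n_j$ and the dilation parameter $r_j$ directly through the single condition $n_j(1-r_j)\to 0$, with no subsequence extraction needed. Your route is more self-contained and quantitatively transparent, at the price of reaching back into the explicit formula for $F$ rather than using only the qualitative conclusions (a)--(c) of \Cref{lem:SchurFunctionLimits}; the paper's route is more black-box in that respect but pays with the extra extraction step. The final normalisation $p_j := P_j/\lVert P_j\rVert_{\overline{\BB^d}}$ is a clean way to obtain (i) exactly; the paper instead arranges strict bounds $|q_n| < 1 - 1/(6n)$ at the approximation stage and verifies the supremum is attained in the limit.
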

\begin{proof}
    Let \(G\) be defined as in \Cref{lem:SchurFunctionLimits} and let \(H(z) = \frac12 (1 + G(z))\); notice that holomorphy of \(G\) implies that of \(H\). Items (a)-(c) of \Cref{lem:SchurFunctionLimits} respectively imply:
    \begin{enumerate}[label=\textnormal{(\alph*)}]
        \item with the triangle inequality, \(\lvert H(z) \rvert \leq \frac12 (1 + \norm{z})\);
        \item \(\lim_{r\to1} \lvert H(r\z) \rvert < 1\) for \(\rmd\s\)-a.e. \(\z\in\partial\BB^d\);
        \item \(\lim_{r \to 1} H(r\z) = 1\) for \(\rmd\mu\)-a.e. \(\z\in\partial\BB^d\);
    \end{enumerate}
    where the inequality in (b) is strict because \(\lim_{r\to1}G(r\z) \neq 1\).

    Now, as \cite[1.2.6]{Rud08} remarks, holomorphic functions on a ball have global power series representations on that ball; for \(r < 1\), then, \(z \mapsto H(rz)\) has a global power series representation on the larger ball \(\frac1r \BB^d\). This power series converges uniformly strictly inside this ball, so in particular for \(z \in \overline{\BB^d} \subsetneq \frac1r \BB^d\) we can uniformly approximate \(H(rz)\) by polynomials by truncating a power series.

    With this in mind, let \(r_n = 1 - \frac1n\) for \(n\in\NN\). Then there exist polynomials \(q_n\) such that for \(n\in\NN\) and all \(z \in \BB^d\),
    \[
        \lvert q_n(z) - H(r_n z) \rvert < \frac{1}{3n},
    \]
    and applying the triangle inequality to \(\lvert q_n(z) - H(r_n z) + H(r_nz)\rvert\) we obtain the bound
    \[
        \lvert q_n(z) \rvert < 1 - \frac1{6n}, \quad \text{ for \(z \in \BB^d\)}.
    \]
    By continuity, this bound extends to all \(z \in \overline{\BB^d}\) at the cost of the strict inequality, so by taking a limit, for \(z \in \overline{\BB^d}\) we have \(\lim_{n\to\infty} \lvert q_n(z) \rvert \leq 1\). Furthermore, if this limit were equal to 1 for all \(z\) in a set of positive \(\s\)-measure, then we would have that \(\lim_{n\to\infty} \lvert H(r_nz) \rvert = 1\) for all \(z\) in this set, contradicting (b). Our inequality is therefore strict \(\rmd\s\)-a.e.:
    \[
        \lim_{n\to\infty} \lvert q_n(\z) \rvert < 1 \quad \text{ for \(\rmd\s\)-a.e. \(\z \in \partial {\BB^d}\)}.
    \]
    Similarly then the supremum over \(n\) is bounded as
    \[
        \sup_{n\in\NN} \lvert q_n(\z) \rvert < 1 \quad \text{ for \(\rmd\s\)-a.e. \(\z \in \partial {\BB^d}\)}.
    \]
    On the other hand, for \(\rmd\mu\)-a.e. \(\z \in \partial\BB^d\),
    \[
        \lim_{n\to\infty} q_n(\z) = \lim_{n\to\infty} H(r_n \z) = \lim_{r\to1} H(r\z) = 1,
    \]
    and in turn we have
    \[
        \limsup_{n\in\NN} \lvert 1 - q_n(\z) \rvert = 0 \quad \text{ for \(\rmd\mu\)-a.e. \(z \in \overline{\BB^d}\)}.
    \]
    The set where this property does not hold is thus \(\mu\)-measure zero, so
    \[
        \mu\left\{\z \in \partial \BB^d : \limsup_{n\in\NN} \lvert 1 - q_n(\z) \rvert = 0 \right\} = \mu(\partial \BB^d) = 1.
    \]
    We follow the procedure described in the remark succeeding \cite[Theorem 2.5.1]{Sim05a} to obtain a sequence \((k_n)_{n\in\NN}\) and for each \(\z \in \partial \BB^d\) an integer \(j_0(\z)\) such that, for \(\rmd\mu\)-a.e. \(\z \in \partial \BB^d\),
    \[
        \lvert 1 - q_n(\z) \rvert \leq \frac{1}{j^2} \quad \text{ when \(n \geq k_j\) and \(j \geq j_0(\z)\)}.
    \]

    For \(j \in \NN\), we now define \(p_j := q_{k_j}^j\). We complete the proof of the result as follows:
    \begin{enumerate}[label=\textnormal{(\roman*)}]
        \item For \(z \in \BB^d\) and hence for \(z \in \overline{\BB^d}\), \(\lvert p_j(z) \rvert \leq (1 - \frac{1}{6k_j})^j \leq 1\), so taking suprema,
        \[
            \sup_{\substack{z \in \overline{\BB^d}\\j \in \NN}} \lvert p_j(z) \rvert \leq 1.
        \]
        On the other hand,
        \[
            \lvert 1 - p_n(\z) \rvert = \lvert 1^n - q_{k_n}(\z)^n \rvert = \lvert (1 - q_{k_n}(\z))r(\z) \rvert = \lvert 1 - q_{k_n}(\z) \rvert \lvert r(z) \rvert
        \]
        where \(r\) is some polynomial (which is therefore bounded on the ball). Since for \(\rmd\mu\)-a.e. \(\z\) we saw that \(\lim_{n\to\infty} q_n(\z) = 1\), it follows that \(\lim_{n\to\infty}p_n(\z) = 1\) also for such \(\z\), and therefore
        \[
            \sup_{\substack{z \in \overline{\BB^d}\\j \in \NN}} \lvert p_j(z) \rvert \geq 1,
        \]
        showing equality.

        \item For \(\rmd\s\)-a.e. \(\z \in \partial \BB^d\), we have \(\sup_{n\in\NN} \lvert q_n(\z) \rvert < 1\), so for \(\rmd\s\)-a.e. \(\z\),
        \[
            \lim_{j\to\infty}\lvert p_j(\z) \rvert = \lim_{j\to\infty} \lvert q_j(\z)^j\rvert \leq \lim_{j\to\infty} \left(\sup_{n\in\NN}\lvert q_n(\z) \rvert\right)^j = 0.
        \]

        \item Let \(K \subseteq \BB^d\) be compact. Then \(\max\{\norm{z} : z \in K\}\) is attained by some \(z_K \in K\), and this maximum is less than \(1\); hence for \(z \in K\) we have
        \[
            \lvert H(z) \rvert \leq \frac12 (1 + \norm{z}) \leq \frac12(1 + \norm{z_K}) < 1.
        \]
        As \(\lvert q_n(z) \rvert \leq 1 - \frac{1}{3n}\) for each \(n\in\NN\) and \(\lim_{n\to\infty} q_n(z) = H(z)\),
        \[
            \sup_{\substack{z \in K\\n \in \NN}} \lvert q_n(z) \rvert < 1,
        \]
        so by the same argument as for (ii) we have that
        \[
            \lim_{j\to\infty} \lvert p_j(z) \rvert = 0.
        \]
        Moreover, as \(K\) is compact, the supremum over \(K\) is achieved and therefore the convergence is uniform.

        \item Calculate directly that
        \begin{align*}
            \lvert 1 - p_j(\z) \rvert & = \lvert 1 - q_{k_j}(\z)^j \rvert \\
            & = \lvert 1 - q_{k_j}(\z) + q_{k_j}(\z) - q_{k_j}(\z)^j \rvert \\
            & \leq \lvert 1 - q_{k_j}(\z) \rvert +\lvert q_{k_j}(\z) - q_{k_j}(\z)^j \rvert \\
            & = \lvert 1 - q_{k_j}(\z) \rvert +\lvert q_{k_j}(\z) \rvert \lvert 1 - q_{k_j}(\z)^{j-1} \rvert \\
            & \leq \lvert 1 - q_{k_j}(\z) \rvert + \lvert 1 - q_{k_j}(\z)^{j-1} \rvert
        \end{align*}
        and then repeat this process \(j-1\) times to get
        \[
             \lvert 1 - p_j(\z) \rvert \leq j\cdot \lvert 1 - q_{k_j}(\z) \rvert.
        \]
        By definition of the sequence \((k_j)_{j\in\NN}\), taking \(n = k_j\) we know that
        \[
            \lvert 1 - q_{k_j}(\z) \rvert \leq \frac{1}{j^2}
        \]
        for sufficiently large \(j\). Take the limit to finally see that
        \[
            \lvert 1 - p_n(\z) \rvert \leq \frac{1}{n} \to 0.
        \]
    \end{enumerate}
\end{proof} 

We shall now apply these results to see that, when the singular part of a measure is discrete, its Christoffel function is determined solely by its absolutely continuous part.

\begin{theorem}
    \label{thm:AbsolutelyContinuousDeterminesEntropy}
    Let \(\mu = \mu_{\ac} + \mu_\rms\) be a the Lebesgue decomposition of a non-trivial probability measure \(\mu\) on \(\partial \BB^d\) with \(\mu_\rms\) discrete. Then for \(z_0 \in \BB^d\),
    \[
        \l_{\infty}^{(d)}(z_0; \rmd\mu) =\l_{\infty}^{(d)}(z_0; \rmd\mu_{\rm{ac}}).
    \]
\end{theorem}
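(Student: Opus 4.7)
The plan is to establish the equality via two inequalities, using the infimum characterisation of $\lambda_\infty^{(d)}$ from \Cref{lem:CFExists} in both directions, and then deploying the polynomials from \Cref{prop:PropertiesOfPj} (applied to the discrete measure $\mu_\rms$) to transfer between $\mu$ and $\mu_\ac$.

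For the easy direction, since $\mu_\ac \leq \mu$ as positive measures, for any polynomial $q \in \CC[z_1, \ldots, z_d]$ with $q(z_0) = 1$ we have
\[
    \int_{\partial\BB^d} |q(\z)|^2 \, \rmd\mu(\z) \geq \int_{\partial\BB^d} |q(\z)|^2 \, \rmd\mu_{\ac}(\z) \geq \l_{\infty}^{(d)}(z_0; \rmd\mu_{\ac}),
\]
and taking the infimum on the left yields $\l_{\infty}^{(d)}(z_0; \rmd\mu) \geq \l_{\infty}^{(d)}(z_0; \rmd\mu_{\ac})$ by \Cref{lem:CFExists}.

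For the reverse inequality, fix any polynomial $q$ with $q(z_0) = 1$; the strategy is to modify $q$ into a polynomial which is small on $\supp(\mu_\rms)$ while staying close to $q$ in $L^2(\mu_\ac)$, then normalise its value at $z_0$. Apply \Cref{prop:PropertiesOfPj} to the discrete measure $\mu_\rms$ (rescaled to a probability measure if $\mu_\rms \not\equiv 0$; the case $\mu_\rms \equiv 0$ is trivial) to obtain polynomials $(p_j)_{j \in \NN}$ with $|p_j| \leq 1$ on $\overline{\BB^d}$, $p_j \to 0$ $\rmd\s$-a.e. on $\partial\BB^d$, $p_j \to 0$ uniformly on compact subsets of $\BB^d$, and $p_j \to 1$ $\rmd\mu_\rms$-a.e. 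Since $z_0 \in \BB^d$, the uniform convergence on compact sets gives $p_j(z_0) \to 0$, so $1 - p_j(z_0) \neq 0$ for $j$ large, and we may define the polynomials
\[
    Q_j(z) := \frac{(1 - p_j(z))\,q(z)}{1 - p_j(z_0)},
\]
which satisfy $Q_j(z_0) = 1$.

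It remains to estimate $\int |Q_j|^2 \, \rmd\mu$. Using $|1 - p_j| \leq 2$, the integrand $|1 - p_j(\z)|^2 |q(\z)|^2$ is dominated by $4|q(\z)|^2 \in L^1(\mu)$; on the other hand, $|1 - p_j(\z)|^2 \to 1$ for $\rmd\s$-a.e. $\z$, hence for $\rmd\mu_\ac$-a.e. $\z$, while $|1 - p_j(\z)|^2 \to 0$ for $\rmd\mu_\rms$-a.e. $\z$. The dominated convergence theorem (applied separately to $\mu_\ac$ and $\mu_\rms$) then gives
\[
    \lim_{j \to \infty} \int_{\partial\BB^d} |1 - p_j(\z)|^2\,|q(\z)|^2 \, \rmd\mu(\z) = \int_{\partial\BB^d} |q(\z)|^2 \, \rmd\mu_\ac(\z),
\]
so combined with $|1 - p_j(z_0)|^{-2} \to 1$ we obtain $\int |Q_j|^2 \, \rmd\mu \to \int |q|^2 \, \rmd\mu_\ac$. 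Since $Q_j$ is admissible for the infimum defining $\l_{\infty}^{(d)}(z_0; \rmd\mu)$, we conclude $\l_{\infty}^{(d)}(z_0; \rmd\mu) \leq \int |q|^2 \, \rmd\mu_\ac$, and taking the infimum over admissible $q$ yields the desired inequality. The main obstacle is nothing deep but rather the verification that the three modes of convergence of $(p_j)$ furnished by \Cref{prop:PropertiesOfPj} can be combined to give both the dominated convergence estimate and the normalisation at $z_0$, which is precisely why \Cref{prop:PropertiesOfPj} was crafted with these four properties in concert.
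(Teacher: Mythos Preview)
Your proof is correct and follows essentially the same approach as the paper: both directions use the infimum characterisation, and the hard inequality is obtained by defining the modified polynomials $Q_j(z) = q(z)(1-p_j(z))/(1-p_j(z_0))$ from \Cref{prop:PropertiesOfPj} and passing to the limit. Your justification of the limit via dominated convergence with the explicit dominating function $4|q|^2$ is in fact slightly cleaner than the paper's appeal to compactness.
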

\begin{proof}
    Fix \(z_0 \in \BB^d\). For any polynomial \(q \in \CC[z_1, \ldots, z_d]\) we have that
    \[
        \int_{\BB^d} \lvert q(\z) \rvert^2 \, \rmd\mu(\z) = \int_{\BB^d} \lvert q(\z) \rvert^2 \, \rmd\mu_{\rm{ac}}(\z) + \int_{\BB^d} \lvert q(\z) \rvert^2 \, \rmd\mu_{\rms}(\z) \geq \int_{\BB^d} \lvert q(\z) \rvert^2 \, \rmd\mu_{\ac}(\z),
    \]
    so taking infima over all \(q\) such that \(q(z_0) = 1\) we have that
    \[
        \l_{\infty}^{(d)}(z_0; \rmd\mu) \geq \l_{\infty}^{(d)}(z_0; \rmd\mu_{\ac}).
    \]
    
    Let \(q \in \CC[z_1,\ldots,z_d]\) be a polynomial such that \(q(z_0) = 1\), and let \((p_j)_{j\in\NN}\) be as in \Cref{prop:PropertiesOfPj}. Define a new sequence of polynomials \((q_j)_{j\in\NN}\) by
    \[
        q_j(z) := q(z)\frac{1-p_j(z)}{1-p_j(z_0)}
    \]
    and observe that \(q_j(z_0) = 1\) for all \(j\in\NN\).
    
    Since \(z_0 \in \BB^d\), it sits inside some compact subset of the unit ball; by \Cref{thm:AbsolutelyContinuousDeterminesEntropy}(iii) we therefore have that \(\lim_{j\to\infty} p_j(z_0) = 0\).
    By \Cref{thm:AbsolutelyContinuousDeterminesEntropy}(ii) we have \(\lim_{j\to\infty} p_j(\z) = 0\) for \(\rmd\s\)-almost all \(\z \in \partial \BB^d\) and so also for \(\rmd\mu_{\ac}\)-almost all \(\z \in \partial\BB^d\), while \Cref{thm:AbsolutelyContinuousDeterminesEntropy}(iv) states the same limit is 1 for \(\rmd\mu_\rms\)-almost all \(\z \in \partial \BB^d\).

    Therefore, by compactness of \(\partial \BB^d\), we may interchange the following integral with the limit as \(j \to \infty\):
    \begin{align*}
        \int_{\partial \BB^d} \lvert q_j(\z) \rvert^2 \, \rmd\mu(\z) & = \int_{\partial\BB^d} \lvert q(\z) \rvert^2 \left\lvert \frac{1-p_j(\z)}{1-p_j(z_0)} \right\rvert^2 \, \rmd\mu_{\rm{ac}}(\z) + \int_{\partial \BB^d} \lvert q(\z) \rvert^2 \left\lvert\frac{1-p_j(\z)}{1-p_j(z_0)}\right\rvert^2 \, \rmd\mu_{\rms}(\z) \\
        & \to \int_{\partial\BB^d} \lvert q(\z) \rvert^2 \left\lvert \frac{1-0}{1-0} \right\rvert^2 \, \rmd\mu_{\rm{ac}}(\z) + \int_{\partial\BB^d} \lvert q(\z) \rvert^2 \left\lvert \frac{1-1}{1-0}\right\rvert^2 \, \rmd\mu_{\rms}(\z) \\
        & = \int_{\partial \BB^d} \lvert q(\z) \rvert^2 \, \rmd\mu_{\ac}(\z).
    \end{align*}

    Recall that the \(q_j\) are polynomials that sent \(z_0\) to 1; hence \(\l_{\infty}^{(d)}(z_0; \rmd\mu)\) is bounded above by the left hand side for each \(j\) and hence by their limit, that is,
    \[
        \l_{\infty}^{(d)}(z_0; \rmd\mu) \leq \lim_{j\to\infty} \int_{\partial \BB^d} q_j(\z) \, \rmd\mu(\z) = \int_{\partial \BB^d} q(\z) \, \rmd\mu_{\ac}(\z)
    \]
    for any polynomial \(q\) with \(q(z_0) = 1\). Taking an infimum over all such polynomials,
    \begin{multline*}
        \l_{\infty}^{(d)}(z_0; \rmd\mu) = \inf\{\l_{\infty}^{(d)}(z_0; \rmd\mu) : q \in \CC[z_1, \ldots, z_d], q(z_0) = 1\} \\ \leq \inf\left\{\int_{\partial \BB^d} q(\z) \, \rmd\mu_{\ac}(\z) : q \in \CC[z_1, \ldots, z_d], q(z_0) = 1\right\} = \l_{\infty}^{(d)}(z_0; \rmd\mu_{\ac}).
    \end{multline*}
    This shows equality.
\end{proof}

We are now in a position to complement \Cref{thm:BoundedAbove} with a converse inequality when we place additional conditions upon the measure.

\begin{lemma}
    \label{lem:BoundedBelow}
    Let \(\mu\) be a non-trivial probability measure on \(\partial \BB^d\) with Lebesgue decomposition \(\rmd \mu(\z) = w(\z)\rmd \s(\z) + \rmd \mu_{\rms}(\z)\) such that \(\mu_\rms\) is discrete. If there exists \(f \in H^{\infty}(\BB^d)\) such that \(f(0) = 1\) and
    \begin{equation}
        \label{eq:SVHypothesis}
        \int_{\partial \BB^d} \lvert f(\z) \rvert^2 w(\z) \, \rmd\s(\z) \leq \exp \left( \int_{\partial \BB^d} \log(w(\z)) \, \rmd\s(\z) \right),
    \end{equation}
    then
    \begin{equation}
        \label{eq:BoundedBelow}
        \l_{\infty}^{(d)}(0; \rmd \mu) \leq \exp\left(\int_{\partial \BB^d} \log(w(\z)) \, \rmd \s(\z) \right).
    \end{equation}
\end{lemma}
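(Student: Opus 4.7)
The statement follows almost immediately by combining two earlier results with the hypothesis, so the plan is short.

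First, I would invoke \Cref{thm:AbsolutelyContinuousDeterminesEntropy}: because $\mu_{\rms}$ is assumed discrete, the Christoffel function at $0$ depends only on the absolutely continuous part, i.e.
\[
    \l_{\infty}^{(d)}(0;\rmd\mu) = \l_{\infty}^{(d)}(0;\rmd\mu_{\ac}),
\]
where $\rmd\mu_{\ac}(\z) = w(\z)\,\rmd\s(\z)$. This reduces the problem to bounding $\l_{\infty}^{(d)}(0;\rmd\mu_{\ac})$ from above.

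Next, since $\mu_{\ac}$ is absolutely continuous with respect to $\s$, \Cref{thm:HInftyEntropy} applies and gives
\[
    \l_{\infty}^{(d)}(0;\rmd\mu_{\ac}) = \inf\left\{\int_{\partial\BB^d}\lvert g(\z)\rvert^2\,\rmd\mu_{\ac}(\z) : g \in H^{\infty}(\BB^d),\; g(0)=1\right\}.
\]
The hypothesis provides exactly such a function $f \in H^{\infty}(\BB^d)$ with $f(0)=1$, so $f$ is admissible in the infimum. Therefore
\[
    \l_{\infty}^{(d)}(0;\rmd\mu_{\ac}) \leq \int_{\partial\BB^d}\lvert f(\z)\rvert^2 w(\z)\,\rmd\s(\z) \leq \exp\left(\int_{\partial\BB^d}\log(w(\z))\,\rmd\s(\z)\right),
\]
where the last inequality is the assumption \eqref{eq:SVHypothesis}. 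Combining with the first step yields \eqref{eq:BoundedBelow}.

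There is no real obstacle here: all the work has been front-loaded into \Cref{thm:AbsolutelyContinuousDeterminesEntropy} (reduction to the absolutely continuous part, where discreteness of $\mu_{\rms}$ is essential for constructing the cut-off polynomials $p_j$) and \Cref{thm:HInftyEntropy} (expansion of the admissible class from polynomials to $H^{\infty}(\BB^d)$, which relied on a Taylor-series/maximum-modulus argument). The hypothesis is precisely designed so that exhibiting a single admissible $f$ closes the gap between the Christoffel infimum and the Szeg\H{o} entropy; the content of the later sections of the paper is showing that such an $f$ exists for various natural classes of weights.
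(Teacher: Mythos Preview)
Your proposal is correct and is essentially identical to the paper's own proof: the paper also invokes \Cref{thm:AbsolutelyContinuousDeterminesEntropy} to pass from $\mu$ to $w\,\rmd\s$, then uses that $f$ is admissible for the $H^{\infty}$-infimum \eqref{eq:HInftyEntropy} to obtain the bound, and finishes with the hypothesis \eqref{eq:SVHypothesis}. Your exposition is slightly more detailed but the logical structure matches exactly.
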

\begin{proof}
    Let \(f \in H^{\infty}(\BB^d)\) be such that \(f(0) = 1\) and suppose that \eqref{eq:SVHypothesis} holds. Since \(f\) is admissible for \eqref{eq:HInftyEntropy}, it follows from \Cref{thm:AbsolutelyContinuousDeterminesEntropy} that
    \[
        \l^{(d)}_{\infty}(0; \rmd \mu) = \l^{(d)}_{\infty}(0; w \, \rmd \s) \leq \int_{\partial \BB^d} \lvert f(\z) \rvert^2 w(\z) \, \rmd\s(\z),
    \]
    and the result follows.
\end{proof}

\begin{remark}
    When \(d = 1\), \cite{Sim05a} is able to explicitly construct a function \(f \in H^{\infty}(\DD)\) satisfying the hypothesis of \Cref{lem:BoundedBelow} for any \(w \in L^1(\TT)\): essentially, one takes
    \[
        q(z) := \exp\left(\frac{-1}{2} \int_{\TT} \frac{1 + ze^{-i\th}}{1 - ze^{-i\th}} \log(w(e^{i\th})) \, \frac{\rmd \th}{2\pi}\right)
    \]
    and
    \[
        f(z) := \frac{q(z)}{q(0)},
    \]
    though some careful approximation is necessary as in general \(w\) may be badly behaved.
    
    The desired inequality then follows from properties of the Poisson kernel, which arises in one variable as twice the real part of the Herglotz kernel \(H(z,w) = \frac{1 + z\overline{w}}{1 - z\overline{w}}\) and hence appears in \(\lvert q(z) \rvert^2\). For \(d > 1\), this relationship between the Poisson kernel and the choice of Herglotz kernel in which we are interested fails, engendering the need for our additional hypothesis.
\end{remark}

\begin{corollary}
    \label{cor:MultivariateSzego}
    Let \(\mu\) be a non-trivial probability measure on \(\partial \BB^d\) with Lebesgue decomposition \(\rmd \mu(\z) = w(\z)\rmd \s(\z) + \rmd \mu_{\rms}(\z)\), where \(\mu_\rms\) is discrete, and suppose that there exists \(f \in H^{\infty}(\BB^d)\) such that \(f(0) = 1\) and
    \[
        \int_{\partial \BB^d} \lvert f(\z) \rvert^2 w(\z) \, \rmd\s(\z) \leq \exp \left( \int_{\partial \BB^d} \log(w(\z)) \, \rmd\s(\z) \right).
    \]
    Then
    \begin{equation}
        \l_{\infty}^{(d)}(0; \rmd \mu) = \exp\left(\int_{\partial \BB^d} \log(w(\z)) \, \rmd \s(\z) \right).
    \end{equation}
\end{corollary}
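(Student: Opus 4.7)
The plan is straightforward: the corollary is essentially the conjunction of the two previously established inequalities \Cref{thm:BoundedAbove} and \Cref{lem:BoundedBelow}, so the proof amounts to observing that their hypotheses are met and invoking them in sequence.

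First I would apply \Cref{thm:BoundedAbove}, which requires only that \(\mu\) be a non-trivial probability measure on \(\partial\BB^d\) with the stated Lebesgue decomposition \(\rmd\mu = w\rmd\s + \rmd\mu_\rms\). This hypothesis is part of our assumption, and the theorem immediately yields
\[
    \l_\infty^{(d)}(0;\rmd\mu) \geq \exp\left(\int_{\partial\BB^d} \log(w(\z)) \, \rmd\s(\z)\right).
\]

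Next I would apply \Cref{lem:BoundedBelow}, whose hypotheses are exactly the additional assumptions present in the corollary: namely that \(\mu_\rms\) is discrete and that there exists \(f \in H^\infty(\BB^d)\) with \(f(0) = 1\) satisfying the integral inequality \eqref{eq:SVHypothesis}. With these in place, the lemma yields the reverse inequality
\[
    \l_\infty^{(d)}(0;\rmd\mu) \leq \exp\left(\int_{\partial\BB^d} \log(w(\z)) \, \rmd\s(\z)\right).
\]

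Combining the two inequalities gives the desired equality, so there is no real obstacle; the work has already been done in \Cref{thm:BoundedAbove} (via Jensen's inequality, integration by slices and the mean value property applied to slice polynomials) and in \Cref{lem:BoundedBelow} (via \Cref{thm:AbsolutelyContinuousDeterminesEntropy} to reduce to the absolutely continuous part and \Cref{thm:HInftyEntropy} to admit \(f\) into the infimum). The corollary is therefore a one-line assembly of these two bounds.
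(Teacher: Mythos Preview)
Your proposal is correct and matches the paper's own proof exactly: the paper simply states that the result is the combination of \Cref{thm:BoundedAbove} and \Cref{lem:BoundedBelow}. Your write-up is just a slightly more explicit unpacking of that one-line assembly.
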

\begin{proof}
    This is the combination of \Cref{thm:BoundedAbove} and \Cref{lem:BoundedBelow}.
\end{proof}

In the final section of the paper we shall highlight a number of classes of examples satisfying \eqref{eq:SVHypothesis}.

\section{A Multivariate Summary Theorem}

In this section we use the orthogonal polynomials and Verblunsky coefficients of a measure on \(\partial\BB^d\) that were introduced and studied in Section 2 and Section 3, respectively, to obtain a multivariate analogue of \Cref{thm:SummaryThm}. Throughout, \(\mu\) shall be a non-trivial probability measure on \(\partial\BB^d\) with moment kernel \(K\), Verblunsky coefficients \((\g_{\a,\b})_{\a,\b\in\NN_0^d}\), monic orthogonal polynomials \((\P_\a)_{\a\in\NN_0^d}\), orthonormal polynomials \((\p_\a)_{\a\in\NN_0^d}\) with leading coefficients \((a_{\a,\a})_{\a\in\NN_0^d}\), sharp polynomials \((\p_\a^{\#})_{\a\in\NN_0^d}\) and Christoffel function \(\l^{(d)}_\infty(z; \rmd\mu)\).

\begin{lemma}
    \label{i=ii}
    For \(\a \in \NN_0^d\), we have that
    \[
        \norm{\P_{\a}} = a_{\a, \a}^{-1}.
    \]
\end{lemma}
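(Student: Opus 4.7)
The plan is to leverage the fact that the orthonormal polynomials $(\varphi_\alpha)_{\alpha \in \NN_0^d}$ and the monic orthogonal polynomials $(\Phi_\alpha)_{\alpha \in \NN_0^d}$ arise from exactly the same Gram--Schmidt procedure (applied to the monomials in shortlex order), differing only in the choice of normalisation --- $\varphi_\alpha$ is scaled to have unit $L^2(\mu)$-norm, while $\Phi_\alpha$ is scaled to have leading coefficient $1$. Consequently, each $\Phi_\alpha$ and $\varphi_\alpha$ span the same one-dimensional orthogonal complement of $\CC[z_1,\ldots,z_d]_{\mathrm{prec}(\alpha)}$ inside $\CC[z_1,\ldots,z_d]_\alpha$, and must therefore be scalar multiples of one another.

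The first step I would take is to identify this scalar multiple precisely: by non-triviality of $\mu$, we have $\|\Phi_\alpha\| > 0$, so the polynomial $\Phi_\alpha / \|\Phi_\alpha\|$ has unit $L^2(\mu)$-norm, lies in the orthogonal complement of $\CC[z_1,\ldots,z_d]_{\mathrm{prec}(\alpha)}$ within $\CC[z_1,\ldots,z_d]_\alpha$, and has positive leading coefficient $1/\|\Phi_\alpha\|$. The Gram--Schmidt process producing $\varphi_\alpha$ in \Cref{def:OPs} is uniquely determined up to a unimodular phase in each step; fixing this phase by requiring the leading coefficient to be positive (the standard convention consistent with the Cholesky-based construction discussed after \Cref{def:OPs}, since the diagonal entries of an upper-triangular Cholesky factor of a positive matrix are positive), we obtain
\[
    \varphi_\alpha = \frac{\Phi_\alpha}{\|\Phi_\alpha\|}.
\]

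The second step is then to compare the leading coefficients on both sides. On the left, the coefficient of $z^\alpha$ in $\varphi_\alpha$ is $a_{\alpha,\alpha}$ by definition. On the right, since $\Phi_\alpha$ is monic with leading monomial $z^\alpha$, the coefficient of $z^\alpha$ in $\Phi_\alpha/\|\Phi_\alpha\|$ is $1/\|\Phi_\alpha\|$. Equating gives $a_{\alpha,\alpha} = 1/\|\Phi_\alpha\|$, i.e. $\|\Phi_\alpha\| = a_{\alpha,\alpha}^{-1}$, as desired.

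The only subtle point is justifying the sign/phase choice that makes $\varphi_\alpha = \Phi_\alpha/\|\Phi_\alpha\|$ rather than some unimodular scalar multiple thereof; this is not a genuine obstacle but is the step that deserves a sentence of care, since otherwise one only gets $|a_{\alpha,\alpha}| = \|\Phi_\alpha\|^{-1}$. Appealing to the Cholesky-factorisation description of $\varphi_\alpha$ (the final column of $F^{-1}$ for the upper-triangular Cholesky factor $F$ of the relevant truncation of $K$, whose diagonal entries are real and positive) resolves this cleanly and establishes the stated equality.
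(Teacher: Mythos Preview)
Your proposal is correct and follows essentially the same approach as the paper: both arguments use the relation \(\varphi_\alpha = \Phi_\alpha / \|\Phi_\alpha\|\) and then compare leading coefficients. Your additional paragraph justifying the phase convention via the Cholesky description is more careful than the paper's own proof, which simply asserts this relation without comment, but the underlying argument is the same.
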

\begin{proof}
    The orthonormal and monic orthogonal polynomials corresponding to \(\mu\) are related by
    \[
        \p_{\a} = \frac{\P_\a}{\norm{\P_\a}_\mu}
    \]
    and as \(\P_{\a}\) is monic, the leading coefficient of \(\p_{\a}\) is \(a_{\a,\a} = \frac{1}{\norm{\P_\a}_\mu}\).
\end{proof}

\begin{definition}
    Let \(\mu\) be a probability measure on \(\partial\BB^d\) and define the \emph{moment matrix determinants} of \(\mu\) to be
    \[
        D_{\a} = \det [K(\a', \b')]_{0 \preceq \a',\b' \preceq \a}.
    \]
    for \(\a \in \NN_0^d\). We note that when $d=1$ the matrix $ [K(\a', \b')]_{0 \preceq \a',\b' \preceq \a}$ is Toeplitz. However, this need not hold when $d > 1$. 
\end{definition}

To prove our next equality, we proceed as in the proof of \cite[Theorem 3.1]{CJ02b}.

\begin{lemma}
    \label{ii=iii}
    For \(\a \in \NN_0^d\), we have that
    \[
        a_{\a,\a}^{-2} = \frac{D_{\a}}{D_{\mathrm{prec}(\a)}}.
    \]
\end{lemma}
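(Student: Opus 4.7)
The plan is to exploit the relationship between the orthonormal polynomials, the Cholesky factorisation of the truncated moment matrices, and the determinants of those matrices, as set up in Section 3. I would let $N$ denote the shortlex position of $\a$ (so that $K_\a := [K(\a',\b')]_{0 \preceq \a', \b' \preceq \a}$ is $N \times N$) and write the upper triangular Cholesky factorisation $K_\a = F_\a^* F_\a$ with positive real diagonal. Such a factorisation exists and is unique because non-triviality of $\mu$ (in the sense of \Cref{def:nontrivial}) forces $K_\a$ to be positive definite, which also gives $D_\a > 0$ throughout.

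The key structural observation would be that the leading $(N-1)\times(N-1)$ principal block of $F_\a$ is itself an upper triangular matrix with positive real diagonal whose square equals $K_{\mathrm{prec}(\a)}$, and hence by uniqueness of the Cholesky factorisation it coincides with $F_{\mathrm{prec}(\a)}$. A direct determinant calculation then yields
\[
\frac{D_\a}{D_{\mathrm{prec}(\a)}} = \frac{\det K_\a}{\det K_{\mathrm{prec}(\a)}} = \frac{\prod_{i=1}^N F_\a(i,i)^2}{\prod_{i=1}^{N-1} F_\a(i,i)^2} = F_\a(N,N)^2.
\]

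To finish, I would recall from the discussion preceding \Cref{prop:CJ02bRecurrences} that the coefficient vector of $\p_\a$ is the final column of $F_\a^{-1}$. Since $F_\a$ is upper triangular with positive real diagonal, so is $F_\a^{-1}$, and in particular $(F_\a^{-1})(N,N) = 1/F_\a(N,N)$. This entry is exactly the leading coefficient $a_{\a,\a}$, so $a_{\a,\a}^{-2} = F_\a(N,N)^2$, matching the ratio computed above.

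There is no real obstacle here; the argument is essentially bookkeeping once one identifies the roles of $F_\a$, $F_{\mathrm{prec}(\a)}$, and the shortlex-induced linear indexing. The only subtlety worth flagging is the non-normalised nature of $K$ highlighted in \Cref{rem:Normalisation}, but because the present argument works directly with $F_\a$ rather than passing through the normalised kernel $\tilde K$, this causes no difficulty.
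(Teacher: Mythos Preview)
Your argument is correct and takes a genuinely different route from the paper's. The paper proceeds in the classical OPUC style: it sets up the linear system encoding $\langle \p_\a, z^{\b'}\rangle_\mu = 0$ for $\b' \prec \a$ together with $\sum_\b a_{\a,\b} z^\b = \p_\a$, applies Cramer's rule to solve for $a_{\a,\a}$, and then computes $\langle \p_\a, z^\a \rangle_\mu$ two ways to extract the determinant ratio. Your proof instead works entirely within the Cholesky framework of Section~3: you observe that the leading principal block of $F_\a$ must equal $F_{\mathrm{prec}(\a)}$ by uniqueness, so the determinant ratio collapses to $F_\a(N,N)^2$, and then identify this with $a_{\a,\a}^{-2}$ via the last-column-of-$F_\a^{-1}$ description of the coefficient vector of $\p_\a$. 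Your approach is shorter and arguably more in keeping with the paper's emphasis on the Cholesky/kernel machinery; the paper's Cramer-rule computation is more self-contained (it does not rely on the reader having internalised the $F_\a^{-1}$-column identification) and mirrors the proof of \cite[Theorem~3.1]{CJ02b}. Either way the content is the same, and your flagged subtlety about normalisation is handled correctly.
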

\begin{proof}
    Since \(\mu\) is non-trivial, \(D_\a\) is nonzero for all \(\a \in \NN_0^d\). It follows from Gram--Schmidt that
    \[
        \langle \p_{\a}, z^{\b} \rangle_{\mu} = 0
    \]
    for \(0 \preceq \b \prec \a\), and hence by the definition of \(\langle \cdot, \cdot \rangle_{\mu}\),
    \[
        \sum_{0 \preceq \b \preceq \a} a_{\a,\b} K(\b', \b) = 0
    \]
    for all \(0 \preceq \b' \prec \a\).
    
    The Cramer rule for the linear system
    \begin{align*}
        & \sum_{0 \preceq \b \preceq \a} a_{\a,\b} K(\b', \b) = 0, \quad 0 \preceq \b' \prec n, \\
        & \sum_{0 \preceq \b \preceq \a} a_{\a,\b} z^{\b} = \p_\a
    \end{align*}
    tells us that
    \[
        a_{\a,\a} = \frac{\p_{\a} D_{\mathrm{prec}(\a)}}{\det \begin{bmatrix}
            [K(\a',\b')]_{0 \preceq \a',\b' \preceq \a} \\
            \begin{array}{cccc}1 & z^{e_1} & \cdots & z^{\a} \end{array}
        \end{bmatrix}},
    \]
    which we rearrange to obtain
    \[
        \p_{\a} = \frac{a_{\a,\a}}{D_{\mathrm{prec}({\a})}}\det \begin{bmatrix}
            [K(\a',\b')]_{0 \preceq \a',\b' \preceq \a} \\
            \begin{array}{cccc}1 & z^{e_1} & \cdots & z^{\a} \end{array}
        \end{bmatrix}.
    \]
    By the relation \(\langle z^{\a}, z^{\b} \rangle_{\mu} = K(\a,\b)\), expanding the determinant along the bottom row yields
    \[
        \left\langle \det \begin{bmatrix}
            [K(\a',\b')]_{0 \preceq \a',\b' \preceq \a} \\
            \begin{array}{cccc}1 & z^{e_1} & \cdots & z^{\a} \end{array}
        \end{bmatrix}, z^{\a} \right\rangle_{\mu} = D_{\a}.
    \]
    
    On the other hand,
    \[
        z^{\a} = \frac{1}{a_{\a,\a}} \p_{\a} + \sum_{0\preceq \b \prec \a} c_{\b} z^{\b} 
    \]
    for some coefficients \((c_{\b})_{\b=0}^{\mathrm{prec}(\a)}\), so by orthogonality of \(\p_{\a}\) and \(z^{\b}\),
    \begin{align*}
        D_{\a} & = \left\langle \frac{D_{\mathrm{prec}(\a)}}{ a_{\a,\a}}\p_{\a},  \frac{1}{a_{\a,\a}} \p_{\a} + \sum_{0\preceq \b \prec \a} c_{\b} z^{\b} \right\rangle = \frac{D_{\mathrm{prec}(\a)}}{ a_{\a,\a}^2} \langle \p_{\a}. \p_{\a} \rangle = \frac{D_{\mathrm{prec}(\a)}}{ a_{\a,\a}^2},
    \end{align*}
    so that finally
    \[
        a_{\a,\a}^{-2} = \frac{D_{\a}}{D_{\mathrm{prec}(\a)}}.
    \]
\end{proof}

\begin{lemma}
    \label{iii=iv}
    For \(\a \in \NN_0^d\), we have that
    \[
        \frac{D_{\a}}{D_{\mathrm{prec}(\a)}} = K(\a, \a) \cdot \prod_{0 \preceq \b \preceq \mathrm{prec}(\a)}(1 - \lvert \g_{\b,\a} \rvert^2).
    \]
\end{lemma}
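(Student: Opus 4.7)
The plan is to compute $D_\a$ directly via a Cholesky factorization of the appropriate truncated moment matrix, and then take the ratio. Let $N$ denote the shortlex position of $\a$ in $\NN_0^d$ and write $K_N := [K(\a',\b')]_{0 \preceq \a',\b' \preceq \a}$, so that $D_\a = \det K_N$. Define the diagonal matrix $\Sigma_N := \diag\big(\sqrt{K(\b',\b')}\big)_{0 \preceq \b' \preceq \a}$, which is invertible by non-triviality of $\mu$, and set $\tilde{K}_N := \Sigma_N^{-1} K_N \Sigma_N^{-1}$; this is precisely the truncation at position $N$ of the normalized kernel $\tilde{K}$ appearing in Step 2 of \Cref{def:VCs}. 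Taking determinants then yields
\[
    D_\a \;=\; \det(\Sigma_N)^2 \,\det \tilde{K}_N \;=\; \Bigg(\prod_{0 \preceq \b' \preceq \a} K(\b',\b')\Bigg)\det \tilde{K}_N.
\]

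Next, I would invoke the standard Constantinescu--Schur determinant formula for a normalized positive kernel in terms of its Schur parameters, namely
\[
    \det \tilde{K}_N \;=\; \prod_{0 \preceq \b \prec \b' \preceq \a}\bigl(1 - \lvert \g_{\b,\b'} \rvert^2\bigr).
\]
Concretely, this follows from the upper-triangular Cholesky factorization $\tilde{K}_N = \tilde{F}_N^{\ast}\tilde{F}_N$ (already used implicitly in the discussion preceding \Cref{prop:CJ02bRecurrences}) together with the identification of the $\b'$-th diagonal entry of $\tilde{F}_N$ as $\prod_{0 \preceq \b \prec \b'} d_{\b,\b'}$, which is part of the Schur-parameter parametrization of \cite[Theorem 1.5.3]{Con96}. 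Multiplicativity of the determinant on triangular matrices then gives the displayed product.

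Taking the ratio $D_\a / D_{\mathrm{prec}(\a)}$ cancels every factor indexed by $\b' \prec \a$ from the normalization product and from the double product of $d_{\b,\b'}^2$-terms, leaving precisely
\[
    \frac{D_\a}{D_{\mathrm{prec}(\a)}} \;=\; K(\a,\a) \cdot \prod_{0 \preceq \b \preceq \mathrm{prec}(\a)}\bigl(1 - \lvert \g_{\b,\a} \rvert^2\bigr),
\]
as desired. The main obstacle is verifying the Cholesky-diagonal formula $\tilde{F}_N(\b',\b') = \prod_{\b \prec \b'} d_{\b,\b'}$: although this is a classical consequence of Constantinescu's parametrization, a self-contained derivation would proceed by induction on $N$, combining \Cref{ii=iii} (which reduces the claim to an identity for $a_{\a,\a}^{-2}$) with the Szeg\H{o}-type recurrences of \Cref{thm:Recurrences}, from which one reads off the successive leading coefficients; some care is required here because the recurrence for $\p_\a$ also involves the auxiliary polynomials $\p_{e_1,\a}$ and so the induction must be set up to carry enough information.
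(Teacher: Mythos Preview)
Your proposal is correct and follows essentially the same route as the paper: both arguments reduce to the determinant formula
\[
    D_{\a} = \prod_{0 \preceq \b' \preceq \a} K(\b',\b') \cdot \prod_{0 \preceq \b \prec \b' \preceq \a} d_{\b,\b'}^2,
\]
after which the ratio $D_\a/D_{\mathrm{prec}(\a)}$ is computed by direct cancellation. The only difference is cosmetic: the paper cites this identity as \cite[Theorem~1.5.10]{Con96} outright, whereas you unpack it slightly by first normalizing via $\Sigma_N$ and then reading off the Cholesky diagonal from the Schur parametrization of \cite[Theorem~1.5.3]{Con96}. Your closing remarks about a self-contained inductive verification via \Cref{ii=iii} and the recurrences of \Cref{thm:Recurrences} are therefore unnecessary---the cited result already does the work.
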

\begin{proof}
    By Theorem 1.5.10 of \cite{Con96}, we have
    \[
        D_{\a} = \prod_{\b = 0}^{\a} K(\b, \b) \cdot \prod_{0 \preceq \b' \prec \a' \preceq \a} d^2_{\b', \a'},
    \]
    recalling that \(d_{\b,\a} := \sqrt{1 - \lvert \g_{\b,\a} \rvert^2}\).
    
    This allows us to extend \Cref{ii=iii} as follows:
    \begin{align*}
        \frac{D_{\a}}{D_{\mathrm{prec}(\a)}} & = \frac{\prod_{\b = 0}^{\a} K(\b, \b) \cdot \prod_{0 \preceq \b' \prec \a' \preceq \a} d^2_{\b', \a'}}{\prod_{\b = 0}^{\mathrm{prec}(\a)} K(\b, \b) \cdot \prod_{0 \preceq \b' \prec \a' \preceq \mathrm{prec}(\a)} d^2_{\b', \a'}} \\
        & = K(\a, \a) \cdot \frac{\prod_{0 \preceq \b' \prec \a' \preceq \a} d^2_{\b', \a'}}{\prod_{0 \preceq \b' \prec \a' \preceq \mathrm{prec}(\a)} d^2_{\b', \a'}} \\
        & = K(\a, \a) \cdot \prod_{0 \preceq \b \preceq \mathrm{prec}(\a)}(1 - \lvert \g_{\b,\a} \rvert^2).
    \end{align*}
\end{proof}

\begin{remark} 
    Note that the right hand side here has the form of a product of defects of Verblunsky coefficients, with the additional factor of \(K(\a, \a)\), and recall that in the univariate and noncommutative settings this factor is identically one. We therefore recover equality of analogues of all four items of \cite[Theorem 6.3(i)-(iv)]{GK25}.
\end{remark}

\begin{lemma}
    \label{vi=v=vii}
    For \(\a \in \NN_0^d\), we have that
    \[
        \lvert \p_{\a}^{\#}(0) \rvert^{-2} = \prod_{\b=0}^{\a} (1 - \lvert \g_{0,\b}\rvert^2) = \left(\sum_{\b = 0}^{\a} \lvert \p_{\b}(0) \rvert^2\right)^{-1}.
    \]
\end{lemma}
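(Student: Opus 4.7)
The plan is to evaluate the Szegő-type recurrences of \Cref{thm:Recurrences} at $z = 0$ and then iterate. The key observation is that every monomial appearing in the shift $\sum_{\b} \p_{e_1,\a}^{(\b)} z^{\mathrm{succ}(\b)}$ has the form $z^{\mathrm{succ}(\b)}$ with $|\mathrm{succ}(\b)| \geq 1$ (since $\mathrm{succ}(0) = e_1$ and shortlex is monotone in length), so this entire expression vanishes at the origin. Together with the initial data $\p_0 = \p_0^{\#} \equiv 1$ (which follows from $K(0,0) = 1$ for a probability measure and the discussion preceding the definition of $\p_\a^{\#}$), the recurrences collapse at $z=0$ to
\[
    \p_\a(0) = -\frac{\g_{0,\a}}{d_{0,\a}} \, \p^{\#}_{\mathrm{prec}(\a)}(0), \qquad \p_\a^{\#}(0) = \frac{1}{d_{0,\a}} \, \p_{\mathrm{prec}(\a)}^{\#}(0).
\]

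First I would iterate the second identity. A straightforward induction on $\a$ (in shortlex order) gives
\[
    \p_\a^{\#}(0) = \prod_{0 \prec \b \preceq \a} \frac{1}{d_{0,\b}},
\]
and taking $|\cdot|^{-2}$ produces $\prod_{0\prec \b \preceq \a} (1 - |\g_{0,\b}|^2)$; under the convention $\g_{0,0} = 0$ inherited from the Constantinescu parametrisation (so $d_{0,0} = 1$), this is exactly $\prod_{\b=0}^{\a}(1 - |\g_{0,\b}|^2)$, establishing the first equality.

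For the second equality I would substitute the formula for $\p_\a^{\#}(0)$ into the collapsed first recurrence to obtain
\[
    |\p_\a(0)|^2 = \frac{|\g_{0,\a}|^2}{\prod_{0 \prec \b \preceq \a}(1-|\g_{0,\b}|^2)} \qquad (\a \succ 0),
\]
and then prove by induction on $\a$ that
\[
    \sum_{\b=0}^{\a} |\p_\b(0)|^2 = \prod_{0 \prec \b \preceq \a} \frac{1}{1 - |\g_{0,\b}|^2}.
\]
The inductive step reduces, after adding $|\p_\a(0)|^2$ to the inductive hypothesis and bringing the two terms over the common denominator $\prod_{0\prec \b \preceq \a}(1-|\g_{0,\b}|^2)$, to the algebraic identity $(1-|\g_{0,\a}|^2) + |\g_{0,\a}|^2 = 1$, so the induction closes immediately.

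No step here is really an obstacle — the content is essentially bookkeeping — but the one point that requires care is confirming that the ``shift'' term in the recurrences genuinely vanishes at $0$ (so that only the $\p_{\mathrm{prec}(\a)}^{\#}$ term survives) and that the index conventions handle the base case $\a = 0$ consistently (so that the product $\prod_{\b=0}^{\a}$ and the sum $\sum_{\b=0}^{\a}$ both reduce to $1$, matching $|\p_0^{\#}(0)|^{-2} = 1$). Once these conventions are made explicit, the three quantities are linked by the same telescoping that drives the univariate proof in \cite{Sim05a}.
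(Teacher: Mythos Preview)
Your proposal is correct and follows essentially the same route as the paper: evaluate the recurrences of \Cref{thm:Recurrences} at $z=0$, observe that the shifted term $\sum_{\b}\p_{e_1,\a}^{(\b)}z^{\mathrm{succ}(\b)}$ has no constant term and hence vanishes, iterate the sharp recurrence to obtain $\p_\a^{\#}(0)$ as a product of $d_{0,\b}^{-1}$, and then run the same induction (with the same telescoping identity $(1-|\g_{0,\a}|^2)+|\g_{0,\a}|^2=1$) for the sum. Your handling of the base case and the $\g_{0,0}=0$ convention is, if anything, slightly more explicit than the paper's.
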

\begin{proof}
    Recall the Szeg\H{o}-type recurrences \eqref{eq:SzegoRecurrence} and \eqref{eq:SharpRecurrence}. Note that \(\sum_{\b} \p_{e_1, \a}^{(\b)} z^{\mathrm{succ}(\b)}\) has zero constant term. We may therefore see inductively for \(\a \in \NN_0^d\) that \eqref{eq:SharpRecurrence} implies
    \begin{align}
            \p_\a^{\#}(0) & = - \frac{\overline{\g_{0,\a}}}{d_{0,\a}} \cdot 0 + \frac{1}{d_{0,\a}}\p_{\mathrm{prec}(\a)}^{\#}(0) \nonumber \\
            & = \frac{1}{d_{0,\a}} \cdot \frac{1}{d_{0,\mathrm{prec}(\a)}} \p_{\mathrm{prec}^2(\a)}^{\#}(0) \nonumber \\
            & = \prod_{\b = 0}^{\a} \frac{1}{d_{0,\a}},       \label{eq:revPolyAtZero}
    \end{align}
    and hence
    \[
        \lvert \p_{\a}^{\#}(0) \rvert^{-2} = \prod_{\b=0}^\a d_{0,\b}^2 = \prod_{\b=0}^{\a} (1 - \lvert \g_{0,\b}\rvert^2).
    \]

    Next, using \eqref{eq:revPolyAtZero}, \eqref{eq:SzegoRecurrence} implies that 
    \[
        \p_\a(0) = \frac{1}{d_{0,\a}} \cdot 0 - \frac{\g_{0,\a}}{d_{0,\a}} \p_{\mathrm{prec}(\a)}^{\#}(0) = -\g_{0,\a} \prod_{\b=0}^{\a} \frac{1}{d_{0,\a}}.
    \]
    Suppose for some \(\a \in \NN_0^d\) that
    \[
        \sum_{\b=0}^{\a} \lvert \p_{\b}(0) \rvert^2 = \prod_{\b=0}^{\a} (1 - \lvert \g_{0,\b}\rvert^2)^{-1}.
    \]
    Then
    \begin{align*}
        \sum_{\b=0}^{\mathrm{succ}(\a)} \lvert \p_{\b}(0) \rvert^2 & = \lvert \p_{\mathrm{succ}(\a)}(0) \rvert^2 + \sum_{\b=0}^{\a} \lvert \p_{\b}(0) \rvert^2 \\
        & = \left\lvert -\g_{0,\mathrm{succ}(\a)} \prod_{\b=0}^{\mathrm{succ}(\a)} \frac{1}{d_{0,\a}} \right\rvert^2 + \prod_{\b=0}^{\a} (1 - \lvert \g_{0,\b}\rvert^2)^{-1} \\
        & = \lvert \g_{0,\mathrm{succ}(\a)} \rvert^2 \prod_{\b=0}^{\mathrm{succ}(\a)} \frac{1}{1 - \lvert \g_{0,\b} \rvert^2} + \prod_{\b=0}^{\a} (1 - \lvert \g_{0,\b}\rvert^2)^{-1} \\
        & = \left(\frac{\lvert \g_{0,\mathrm{succ}(\a)} \rvert^2}{1 - \lvert \g_{0,\mathrm{succ}(\a)} \rvert^2} + 1\right) \prod_{\b=0}^{\a} (1 - \lvert \g_{0,\b}\rvert^2)^{-1} \\
        & = \left(\frac{\lvert \g_{0,\mathrm{succ}(\a)} \rvert^2 + 1 - \lvert \g_{0,\mathrm{succ}(\a)} \rvert^2}{1 - \lvert \g_{0,\mathrm{succ}(\a)} \rvert^2} \right)\prod_{\b=0}^{\a} (1 - \lvert \g_{0,\b}\rvert^2)^{-1} \\
        & = \prod_{\b=0}^{\mathrm{succ}(\a)} (1 - \lvert \g_{0,\b}\rvert^2)^{-1}.
    \end{align*}
    
    Since we have \(\lvert \p_{0}(0) \rvert^2 = 1^2 = \frac{1}{d_{0,0}}\) as a base case, by induction over the shortlex ordering on \(\NN_0^d\),
    \[
        \sum_{\b=0}^{\a} \lvert \p_{\b}(0) \rvert^2 = \prod_{\b=0}^{\a} (1 - \lvert \g_{0,\b}\rvert^2)^{-1}
    \]
    for \(\a \in \NN_0^d\), and we are done.
\end{proof}

\begin{corollary}
    \label{viii=v}
    For \(n \in \NN\),
    \begin{equation}
        \label{eq:ChristoffelApproxEquality}
        \l^{(d)}_{n}(0; \rmd \mu) = \prod_{\a=0}^{\a(n)} (1 - \lvert \g_{0,\a}\rvert^2),
    \end{equation}
    and hence
    \[
        \l_{\infty}^{(d)}(0;\rmd\mu) = \prod_{\a \in \NN_0^d} (1 - \lvert \g_{0,\a}\rvert^2).
    \]
\end{corollary}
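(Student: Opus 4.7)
The plan is to observe that this corollary follows essentially immediately by combining the definition of the Christoffel approximate with the second equality established in Lemma \ref{vi=v=vii}, and then passing to the limit.

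First, I would recall from Definition \ref{def:CDK} that, for \(n \in \NN_0\),
\[
    \l^{(d)}_n(0; \rmd \mu) = \left( \sum_{\b=0}^{\a(n)} \lvert \p_{\b}(0) \rvert^2 \right)^{-1}.
\]
Now I would apply Lemma \ref{vi=v=vii} with \(\a = \a(n)\), which states
\[
    \left( \sum_{\b=0}^{\a(n)} \lvert \p_{\b}(0) \rvert^2 \right)^{-1} = \prod_{\b=0}^{\a(n)} (1 - \lvert \g_{0,\b} \rvert^2).
\]
Combining these two displays immediately yields equation \eqref{eq:ChristoffelApproxEquality}.

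For the second assertion, I would pass to the limit \(n \to \infty\). By definition of the Christoffel function (Definition after Lemma \ref{lem:CFMinimumFormula}),
\[
    \l_\infty^{(d)}(0; \rmd \mu) = \lim_{n \to \infty} \l^{(d)}_n(0; \rmd \mu),
\]
and the existence of this limit is guaranteed by Lemma \ref{lem:CFExists}. On the right-hand side, as \(n \to \infty\) the index \(\a(n) = n e_d\) exhausts \(\NN_0^d\) in the shortlex ordering, so the finite products \(\prod_{\a \preceq \a(n)}(1 - \lvert \g_{0,\a}\rvert^2)\) form a decreasing sequence of terms in \([0,1]\) and hence converge to the (possibly zero) infinite product \(\prod_{\a \in \NN_0^d}(1 - \lvert \g_{0,\a} \rvert^2)\). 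Taking limits on both sides of \eqref{eq:ChristoffelApproxEquality} then gives
\[
    \l_\infty^{(d)}(0; \rmd \mu) = \prod_{\a \in \NN_0^d}(1 - \lvert \g_{0,\a}\rvert^2),
\]
as required.

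There is no genuine obstacle here: all the real work was carried out in Lemma \ref{vi=v=vii}, where the Szeg\H{o}-type recurrences from Theorem \ref{thm:Recurrences} were used to evaluate \(\p_\a^{\#}(0)\) and \(\p_\a(0)\) and hence the partial sums of \(\lvert \p_\b(0)\rvert^2\). The only minor point worth being careful about is to note that the limit of a decreasing sequence of products in \([0,1]\) is precisely what one means by the infinite product \(\prod_{\a \in \NN_0^d}(1-\lvert\g_{0,\a}\rvert^2)\) under the shortlex exhaustion, which is consistent with the statement of the Szeg\H{o} quantity as used elsewhere in the paper.
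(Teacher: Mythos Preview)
Your proof is correct and follows essentially the same approach as the paper: combine \Cref{def:CDK} with \Cref{vi=v=vii} to obtain \eqref{eq:ChristoffelApproxEquality}, then invoke \Cref{lem:CFExists} to pass to the limit. Your added remark justifying the convergence of the infinite product is a harmless elaboration on what the paper leaves implicit.
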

\begin{proof}
    The first equality follows immediately from \Cref{vi=v=vii} and \Cref{def:CDK} together; the second follows from the first and \Cref{lem:CFExists}.
\end{proof}

\begin{lemma}
    \label{v=ix}
    For \(n \in \NN\), if the singular part \(\mu_{\rms}\) of \(\mu\) is discrete, we have that
    \[
        \prod_{\a \in \NN_0^d} (1 - \lvert \g_{0, \a} \rvert^2) = \lim_{n \to \infty} \int_{\partial \BB^d} \lvert P_n(\z)\rvert^2 \, \rmd\mu_\ac
    \]
    for some sequence of polynomials \((P_n)_{n\in\NN_0}\) with \(P_n \in \CC[z_1, \ldots, z_d]_{\a(n)}\) for \(n \in \NN\).
\end{lemma}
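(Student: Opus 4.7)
The plan is to assemble this statement directly from three results established earlier in the paper, choosing the minimising polynomials from \Cref{lem:CFMinimumFormula} as the sequence $(P_n)_{n\in\NN_0}$.

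First, I would invoke \Cref{viii=v}, which identifies the Szeg\H{o} quantity with the Christoffel function evaluated at the origin:
\[
    \prod_{\a \in \NN_0^d}(1 - \lvert \g_{0,\a} \rvert^2) = \l_{\infty}^{(d)}(0; \rmd\mu).
\]
Next, since $\mu_{\rms}$ is discrete by hypothesis, \Cref{thm:AbsolutelyContinuousDeterminesEntropy} applied at $z_0 = 0$ gives
\[
    \l_{\infty}^{(d)}(0; \rmd\mu) = \l_{\infty}^{(d)}(0; \rmd\mu_{\ac}).
\]

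Now, for each $n \in \NN_0$, let $P_n \in \CC[z_1, \ldots, z_d]_{\a(n)}$ be the polynomial with $P_n(0) = 1$ produced in the proof of \Cref{lem:CFMinimumFormula} applied to the measure $\rmd\mu_{\ac}$ at the point $z = 0$; explicitly, with $(\p_{\a}^{\ac})_{\a \in \NN_0^d}$ the orthonormal polynomials associated to $\rmd\mu_{\ac}$, one sets
\[
    P_n := \l_n^{(d)}(0;\rmd\mu_{\ac}) \sum_{\a = 0}^{\a(n)} \overline{\p_{\a}^{\ac}(0)}\, \p_{\a}^{\ac}.
\]
Then $P_n \in \CC[z_1, \ldots, z_d]_{\a(n)}$, $P_n(0) = 1$, and \Cref{lem:CFMinimumFormula} gives
\[
    \int_{\partial \BB^d} \lvert P_n(\z) \rvert^2 \, \rmd\mu_{\ac}(\z) = \l_n^{(d)}(0; \rmd\mu_{\ac}).
\]

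Finally, by the definition of the Christoffel function and \Cref{lem:CFExists}, $\l_n^{(d)}(0; \rmd\mu_{\ac}) \to \l_{\infty}^{(d)}(0; \rmd\mu_{\ac})$ as $n \to \infty$. Chaining the equalities together yields
\[
    \prod_{\a \in \NN_0^d}(1 - \lvert \g_{0, \a} \rvert^2) = \l_{\infty}^{(d)}(0; \rmd\mu_{\ac}) = \lim_{n\to\infty} \l_n^{(d)}(0; \rmd\mu_{\ac}) = \lim_{n\to\infty}\int_{\partial\BB^d} \lvert P_n(\z) \rvert^2 \, \rmd\mu_{\ac}(\z),
\]
as required. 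The proof is essentially bookkeeping: the non-trivial work has already been done in establishing the recurrence-based formula for the Christoffel function (\Cref{viii=v}) and the insensitivity of the Christoffel function to the discrete singular part (\Cref{thm:AbsolutelyContinuousDeterminesEntropy}). There is no real obstacle; the only point that requires minor care is simply citing the correct minimisers from the proof of \Cref{lem:CFMinimumFormula} to exhibit a concrete sequence $(P_n)_{n \in \NN_0}$ of the required degree.
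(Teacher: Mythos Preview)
Your proposal is correct and follows essentially the same approach as the paper: invoke \Cref{viii=v} and \Cref{thm:AbsolutelyContinuousDeterminesEntropy} to pass from the Verblunsky product to $\l_{\infty}^{(d)}(0;\rmd\mu_{\ac})$, then exhibit the minimising polynomials $P_n$ from the proof of \Cref{lem:CFMinimumFormula} (applied to $\mu_{\ac}$) to realise this as the stated limit. If anything, your version is slightly more explicit in writing out the form of $P_n$ and in citing \Cref{lem:CFExists} for the limit.
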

\begin{proof}
    By the previous corollary and \Cref{thm:AbsolutelyContinuousDeterminesEntropy},
    \[
        \prod_{\a \in \NN_0^d} (1 - \lvert \g_{0, \a} \rvert^2) = \l_{\infty}^{(d)}(z; \rmd \mu) = \l_{\infty}^{(d)}(z; \rmd \mu_{\mathrm{ac}}) = \lim_{n\to\infty} \l_n^{(d)}(z; \rmd \mu_\ac).
    \]
    Recall from the proof of \Cref{lem:CFMinimumFormula}, replacing \(\mu\) by \(\mu_\ac\), that for each \(z \in \overline{\BB^d}\) there exists a family \((P_n)_{n\in\NN_0}\) of polynomials solving \eqref{eq:CFMinimumFormula}, that is, such that \(P_n(z) = 1\) and 
    \[
        \l_n^{(d)}(z; \rmd \mu_\ac) = \int_{\partial \BB^d} \lvert P_n(\z)\rvert^2 \, \rmd\mu_\ac;
    \]
    the result follows as the combination of these equalities.
\end{proof}

We are now in a position to state a multivariate analogue of \Cref{thm:SummaryThm}.

\begin{theorem}
    \label{thm:MainResult}
    Let \(\mu\) be a non-trivial probability measure on \(\partial\BB^d\) with moment kernel \(K : \NN_0^d \times \NN_0^d \to \CC\), orthonormal and monic orthogonal polynomials \((\p_\a)_{\a\in\NN_0^d}\) and \((\P_\a)_{\a\in\NN_0^d}\), Verblunsky coefficients \((\g_{\a,\b})_{\a,\b \in \NN_0^d}\), moment matrix determinants \((D_{\a})_{\a \in \NN_0^d}\), Christoffel approximates \(\l_n^{(d)}(\cdot; \rmd\mu)\) and Christoffel function \(\l_\infty^{(d)}(\cdot; \rmd\mu)\). Let the leading coefficient of \(\p_\a\) be \(a_{\a,\a}\) and let \((P_n)_{n\in\NN_0}\) be the polynomials solving the minimisation problem \eqref{eq:CFMinimumFormula} for \(z = 0\).
    
    The following are equal:
    \begin{enumerate}
        \item[\rm{(i)}] \(\lim_{n\to\infty}\norm{\P_{\a(n)}}^2\);
        \item[\rm{(ii)}] \(\lim_{n\to\infty}a_{\a(n), \a(n)}^{-2}\);
        \item[\rm{(iii)}] \(\lim_{n\to\infty} \frac{D_{\a(n)}}{D_{\mathrm{prec}(\a(n))}}\);
        \item[\rm{(iv)}] \(\lim_{n\to\infty} K(\a(n), \a(n)) \cdot \prod_{\b=0}^{\mathrm{prec}(\a(n))}(1 - \lvert \g_{\b,\a(n)} \rvert^2)\).
    \end{enumerate}
    Moreover, the following are equal, and when \(d > 1\), distinct from the above:
    \begin{enumerate}
        \item[\rm{(v)}] \(\prod_{\a\in\NN_0^d)} (1 - \lvert \g_{0,\a}\rvert^2)\);
        \item[\rm{(vi)}] \(\lim_{n\to\infty}\lvert \p_{\a(n)}^{\#}(0) \rvert^{-2}\);
        \item[\rm{(vii)}] \(\lim_{n\to\infty}\left(\sum_{\a = 0}^{\a(n)} \lvert \p_{\a}(0) \rvert^2\right)^{-1}\);
        \item[\rm{(viii)}] \(\l^{(d)}_{\infty}(0; \rmd \mu)\).
    \end{enumerate}
    When the singular part of \(\mu\) is discrete, we additionally have that \(\mathrm{(v)}\) is equal to
    \begin{enumerate}
        \item[\rm{(viii)'}] \(\l^{(d)}_{\infty}(0; \rmd \mu_{\ac})\);
        \item[\rm{(ix)}] \(\lim_{n\to\infty}\int_{\partial \BB^d} \lvert P_n(\z)\rvert^2 \, \rmd\mu_\ac\).
    \end{enumerate}
    Finally, when additionally there exists \(f \in H^{\infty}(\BB^d)\) such that \(f(0) = 1\) and such that \eqref{eq:SVHypothesis} holds, \(\mathrm{(v)}\) is further equal to
    \begin{enumerate}
        \item[\rm{(x)}] \(\exp\left(\int_{\partial \BB^d} \log(w(\z)) \, \rmd \s(\z) \right)\).
    \end{enumerate}
\end{theorem}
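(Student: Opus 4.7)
The plan is to assemble the pointwise identities proved across Sections 4--6 and specialise to the cofinal subsequence $(\a(n))_{n\in\NN_0}$ of $\NN_0^d$ under the shortlex ordering; each equality in the theorem corresponds directly to a single preparatory lemma or corollary, so the proof is largely organisational.

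For the first group of four equalities, I would invoke \Cref{i=ii} to get the pointwise identity $\norm{\P_\a}^2 = a_{\a,\a}^{-2}$, giving (i) = (ii); \Cref{ii=iii} for the pointwise identity $a_{\a,\a}^{-2} = D_\a / D_{\mathrm{prec}(\a)}$, giving (ii) = (iii); and \Cref{iii=iv} for $D_\a / D_{\mathrm{prec}(\a)} = K(\a,\a) \prod_{\b \preceq \mathrm{prec}(\a)}(1 - \lvert\g_{\b,\a}\rvert^2)$, giving (iii) = (iv). Each of these identities holds for every $\a \in \NN_0^d$ before any limit is taken, so specialising to $\a = \a(n)$ and letting $n \to \infty$ produces the asserted chain of limits, which either all exist with a common value or all fail to exist.

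For the second group, \Cref{vi=v=vii} furnishes the pointwise three-way identity
\[
    \lvert\p_\a^{\#}(0)\rvert^{-2} = \prod_{\b=0}^{\a}\big(1 - \lvert\g_{0,\b}\rvert^2\big) = \left(\sum_{\b=0}^\a \lvert\p_\b(0)\rvert^2\right)^{-1}.
\]
Setting $\a = \a(n)$ and passing to the limit as $n \to \infty$ yields (v) = (vi) = (vii); the middle expression is a decreasing product of terms in $[0,1]$, so the limits exist unconditionally, and the infinite product exhausts $\NN_0^d$ because $\{\b : \b \preceq \a(n)\text{ for some }n\} = \NN_0^d$. Combining with \Cref{viii=v}, which directly gives (v) = (viii), completes the equivalence of (v)--(viii). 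When $\mu_\rms$ is discrete, (viii) = (viii)' follows from \Cref{thm:AbsolutelyContinuousDeterminesEntropy} and (v) = (ix) from \Cref{v=ix}; the final equality (viii) = (x) under the additional hypothesis on $f \in H^\infty(\BB^d)$ is precisely \Cref{cor:MultivariateSzego}.

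The proof presents no serious technical obstacle, since the substance of each equivalence has been handled in its corresponding preparatory result; the task is to combine them cleanly and to note existence of limits where appropriate. The distinctness of the two groups when $d > 1$ is a genuinely multivariate phenomenon witnessed by the explicit counterexample to be constructed in Section 7, which produces a weight $w$ with (v) $\neq$ (x) and hence separates the Szeg\H{o}--Verblunsky quantity from the Szeg\H{o} entropy, demonstrating that the univariate collapse of all these quantities into a single value does not persist on $\partial\BB^d$.
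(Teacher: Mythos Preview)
Your proof is correct and follows essentially the same organisational route as the paper's: each equality is read off from the corresponding preparatory lemma (\Cref{i=ii}, \Cref{ii=iii}, \Cref{iii=iv}, \Cref{vi=v=vii}, \Cref{viii=v}, \Cref{thm:AbsolutelyContinuousDeterminesEntropy}, \Cref{v=ix}, \Cref{cor:MultivariateSzego}) and then specialised to $\a = \a(n)$ before taking limits. Your additional remarks on existence of limits are a welcome clarification that the paper leaves implicit.

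One small slip: your justification of the distinctness of the two lists for $d > 1$ cites the Section~7 counterexample as separating (v) from (x), but (x) is not in the first list. The relevant separation is between (i)--(iv) and (v)--(viii): in that same example all Verblunsky coefficients vanish, so (v) $=1$, whereas $K(\a(n),\a(n)) = 2\int_{\partial\BB^2}|\z_1|^2|\z_2|^{2n}\,\rmd\s = \tfrac{2}{(n+1)(n+2)} \to 0$, giving (iv) $=0$. The paper's own proof does not address this distinctness claim at all, so you are not missing anything the paper supplies; just redirect your reference to the correct pair of quantities.
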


\begin{proof}
    The equality (i) = (ii) follows from \Cref{i=ii}, that (ii) = (iii) follows from \Cref{ii=iii}, and (iii) = (iv) by \Cref{iii=iv}, completing the first list. 
    
    The equalities (v) = (vi) and (v) = (vii) follow from in \Cref{vi=v=vii}, while (v) = (viii) arises by taking a limit in \Cref{viii=v}. When \(\mu_\rms\) is discrete, we have (viii) = (viii)' by \Cref{thm:AbsolutelyContinuousDeterminesEntropy} and (v) = (ix) was proved in \Cref{v=ix}.

    Finally, in the presence of the final assumption in the statement of the theorem, we have (v) = (x) by \Cref{cor:MultivariateSzego}.
\end{proof}

We shall discuss a number of dlasses of measures satisfying the final hypothesis of \Cref{thm:MainResult} in the next section.

\begin{corollary}
    In particular, \Cref{thm:MainResult} implies the following two analogues of classical results when we have some additional hypotheses: firstly,
    \[
        \lim_{n\to\infty} \frac{D_{\a(n)}}{D_{\mathrm{prec}(\a(n))}} = \lim_{n\to\infty} K(\a(n), \a(n)) \cdot \prod_{0 \preceq \b \preceq \mathrm{prec}(\a(n))}(1 - \lvert \g_{\b,\a(n)} \rvert^2).
    \]
    This is analogous to the weak Szeg\H{o} limit theorem, though without a closed form for the limit on the right-hand side.
    
    Secondly, when the singular part of \(\mu\) is discrete and there exists \(f \in H^{\infty}(\BB^d)\) with \(f(0) = 1\) and
    \[
        \int_{\partial \BB^d} \lvert f(\z) \rvert^2 w(\z) \, \rmd\s(\z) \leq \exp \left( \int_{\partial \BB^d} \log(w(\z)) \, \rmd\s(\z) \right),
    \]
    we have
    \[
        \prod_{\a \in \NN_0^d} (1 - \lvert \g_{0,\a}\rvert^2) = \exp\left(\int_{\partial \BB^d} \log(w(\z)) \, \rmd \s(\z) \right)
    \]
    directly analogous to the Szeg\H{o}--Verblunsky theorem. Furthermore, this has the anticipated consequence that
    \[
        \sum_{\a \in \NN_0^d} \lvert \g_{0,\a} \rvert^2 < \infty \quad \text{if and only if} \quad \int_{\partial \BB^d} \log(w(\z)) \, \rmd \s(\z) > -\infty,
    \]
    again characterising log-integrability of the Radon--Nikodym derivative for such measures.
\end{corollary}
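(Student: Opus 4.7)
The plan is to derive the corollary directly from \Cref{thm:MainResult} with only minor additional computation. The first displayed equality is nothing but the equality of items (iii) and (iv) of \Cref{thm:MainResult}, which was proved in \Cref{iii=iv}: the two sequences inside the limits agree termwise for every $n \in \NN$, so taking the limit yields the claim. No hypotheses beyond non-triviality are required.

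For the second displayed identity, I would invoke the chain of equalities (v) = (viii) = (viii)$'$ = (x) from \Cref{thm:MainResult}. These correspond respectively to \Cref{viii=v} (which rewrites the Szeg\H{o} quantity $\prod_{\a}(1 - |\g_{0,\a}|^2)$ as $\l_{\infty}^{(d)}(0;\rmd\mu)$), \Cref{thm:AbsolutelyContinuousDeterminesEntropy} (which uses discreteness of $\mu_{\rms}$ to replace $\rmd\mu$ by $\rmd\mu_{\ac}$), and \Cref{cor:MultivariateSzego} (which uses the hypothesis \eqref{eq:SVHypothesis} on the existence of a suitable $f \in H^{\infty}(\BB^d)$ to identify $\l_{\infty}^{(d)}(0;\rmd\mu_{\ac})$ with $\exp\bigl(\int \log w \, \rmd\s\bigr)$). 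Composing these three steps gives the Szeg\H{o}--Verblunsky equality in the stated hypotheses.

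For the final equivalence, I would take logarithms: positivity of the product $\prod_{\a}(1 - |\g_{0,\a}|^2)$ is equivalent to convergence of the series $\sum_{\a} \log(1 - |\g_{0,\a}|^2)$, which, since non-triviality of $\mu$ forces $|\g_{0,\a}| < 1$ and since the $|\g_{0,\a}|^2$ must be bounded away from $1$ for the product to be positive, is in turn equivalent via the asymptotic $-\log(1 - x) \sim x$ as $x \to 0^+$ to $\sum_{\a} |\g_{0,\a}|^2 < \infty$. Positivity of the right-hand side $\exp\bigl(\int \log w \, \rmd\s\bigr)$ is by definition equivalent to $\int_{\partial\BB^d} \log w \, \rmd\s > -\infty$, and the Szeg\H{o}--Verblunsky identity transfers one finiteness condition to the other, giving the stated characterisation of log-integrability of $w$.

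There is essentially no obstacle here: the corollary merely repackages the main theorem. The only point deserving a moment's care is verifying that the asymptotic comparison $-\log(1 - |\g_{0,\a}|^2) \sim |\g_{0,\a}|^2$ is applicable, which needs only finitely many of the $|\g_{0,\a}|$ to be close to $1$; this is automatic whenever the infinite product is positive, and in the degenerate case where the product vanishes, both $\sum_{\a} |\g_{0,\a}|^2 = +\infty$ and $\int_{\partial\BB^d} \log w \, \rmd\s = -\infty$, so the equivalence still holds.
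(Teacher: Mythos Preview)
Your proposal is correct and matches the paper's approach: the corollary is presented in the paper without a separate proof, as an immediate repackaging of \Cref{thm:MainResult}, and you have correctly identified the relevant items ((iii) = (iv) for the first claim, (v) = (x) for the second) together with the standard infinite-product argument for the final equivalence. The only point worth tightening is phrasing: rather than ``bounded away from $1$'' it is cleaner to note that positivity of the product forces $|\gamma_{0,\alpha}|^2 \to 0$, after which the comparison $-\log(1-x) \sim x$ applies; but the substance of your argument is sound.
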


\begin{remark}
    While, classically, the weak Szeg\H{o} limit theorem and the Szeg\H{o}--Verblunsky theorem provide different expressions for the same quantity, namely \(\prod_{n\in\NN}(1-\lvert \g_n \rvert^2)\), it is interesting to note that in our setting, the analogues of these quantities that we have obtained are distinct.
\end{remark}

\section{Classes of Measures Which Admit a Szeg\H{o}--Verblunsky Theorem}
\label{sec:ExampleClasses}
 
In this final section we construct a number of classes of measures for which there exists \(f \in H^{\infty}(\BB^d)\) with \(f(0) = 1\) and that satisfy \eqref{eq:SVHypothesis}, that is, classes of measures \(\rmd \mu(\z) = w(\z) \rmd\s(\z) + \rmd\mu_\rms\) with \(\mu_\rms\) discrete that by \Cref{cor:MultivariateSzego} admit a multivariate Szeg\H{o}--Verblunsky theorem. Moreover, we discuss a simple example not satisfying this hypothesis and show that, for this example, the multivariate Szeg\H{o}--Verblunsky theorem does not hold.

\begin{theorem}
    \label{thm:examplesinfty}
    Let \(g \in H^{\infty}(\BB^d)\) be such that \(\norm{g}_{\infty} \leq 1\) and \(1/g \in H^{\infty}(\BB^d)\) and for \(z \in \BB^d\) let \(w(z) = \lvert g(z) \rvert^2\). Let \(\mu_\rms\) be a discrete measure such that \(\rmd\mu(\z) := w(\z)\rmd\s(\z) + \rmd\mu_\rms(\z)\) is a probability measure and suppose that $(\gamma_{\a,\b})_{\a,\b\in \NN_0^d}$ are the Verblunsky coefficients corresponding to $\mu$. Then
    \[
        \prod_{\a \in \NN_0^d} (1 - \lvert \g_{0,\a}\rvert^2) = \exp\left(\int_{\partial \BB^d} \log(w(\z)) \, \rmd \s(\z) \right).
    \]
\end{theorem}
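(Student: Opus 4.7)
The plan is to invoke \Cref{cor:MultivariateSzego}, whose hypotheses reduce the proof to exhibiting an $f \in H^{\infty}(\BB^d)$ with $f(0)=1$ and
\[
\int_{\partial\BB^d} |f(\z)|^2\, w(\z)\,\rmd\s(\z) \leq \exp\!\left(\int_{\partial\BB^d}\log w(\z)\,\rmd\s(\z)\right);
\]
combining the resulting identity $\l^{(d)}_\infty(0;\rmd\mu) = \exp\!\left(\int\log w\,\rmd\s\right)$ with the Verblunsky--Christoffel identification $\l^{(d)}_\infty(0;\rmd\mu) = \prod_{\a \in \NN_0^d}(1-|\g_{0,\a}|^2)$ from \Cref{viii=v} then yields the theorem. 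Non-triviality of $\mu$, which is needed for the Verblunsky coefficients to be well-defined via \Cref{def:VCs}, is automatic: since $1/g \in H^{\infty}(\BB^d)$, the boundary values of $|g|$ are bounded below a.e.\ by $\|1/g\|_\infty^{-1}$, so $w$ dominates a positive multiple of $\s$, which is itself non-trivial.

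The natural candidate is $f(z) := g(0)/g(z)$. Since $1/g \in H^{\infty}(\BB^d)$, the function $g$ is nonvanishing on $\BB^d$ (so $g(0) \neq 0$), whence $f \in H^{\infty}(\BB^d)$ with $f(0)=1$. Using $w = |g|^2$ on $\partial\BB^d$ via boundary values, we obtain $|f(\z)|^2 w(\z) = |g(0)|^2$ for $\rmd\s$-almost every $\z$, so
\[
\int_{\partial\BB^d} |f(\z)|^2\, w(\z)\,\rmd\s(\z) = |g(0)|^2.
\]
After doubling and exponentiating, the required bound reduces to the multivariate Jensen-type inequality
\[
\log|g(0)| \leq \int_{\partial\BB^d} \log|g(\z)|\,\rmd\s(\z).
\]

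To establish this Jensen inequality I would proceed by integration by slices. For each fixed $\z\in\partial\BB^d$, the univariate function $\lambda \mapsto g(\lambda\z)$ is bounded and holomorphic on $\DD$, and the classical one-variable Jensen inequality applied to this slice function yields
\[
\log|g(0)| \leq \int_0^{2\pi} \log|g(e^{i\th}\z)|\,\frac{\rmd\th}{2\pi}.
\]
Integrating both sides against the rotation-invariant probability measure $\s$ over $\z \in \partial\BB^d$ and applying the slice decomposition already used in the proof of \Cref{thm:BoundedAbove}, the right-hand side collapses to $\int_{\partial\BB^d}\log|g(\z)|\,\rmd\s(\z)$, giving the claim. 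Integrability of $\log|g|$ is benign: $\|g\|_\infty\le 1$ gives $\log|g|\le 0$ on $\partial\BB^d$, while the a.e.\ lower bound $|g|\ge \|1/g\|_\infty^{-1}$ keeps $\log|g|$ bounded below. I do not foresee a substantial obstacle; the hypotheses on $g$ have been calibrated precisely so that $f=g(0)/g$ annihilates the weight in the $L^2(w\,\rmd\s)$ integral, reducing the matter to a slice-wise mean value inequality.
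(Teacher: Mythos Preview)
Your proposal is correct and follows essentially the same approach as the paper: both take $f(z) = g(0)/g(z)$, reduce to the inequality $\log|g(0)| \le \int_{\partial\BB^d}\log|g(\z)|\,\rmd\s(\z)$, and then invoke \Cref{cor:MultivariateSzego} (equivalently \Cref{thm:MainResult}, (v) = (x)). The only difference is in the verification of that Jensen-type inequality: the paper argues via subharmonicity of $\log|g|^2$ on $\BB^d$ together with a radial-limit argument, whereas you obtain it slice-wise from the one-variable Jensen inequality and integration by slices---a cosmetic variation rather than a genuinely different route.
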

\begin{proof}
    Extend \(w\) to \(\partial \BB^d\) \(\rmd\s\)-almost everywhere by
    \[
        w(\z) := \klim_{z\to\z} w(z) = \lvert \klim_{z\to\z} g(z) \rvert^2,
    \]
    where the latter exists for \(\rmd\s\)-almost all \(\z\) once more by Kor\'anyi's Fatou theorem. First observe that
    \[
        \int_{\partial\BB^d} w(\z) \, \rmd\s(\z) = \int_{\partial \BB^d} \lvert g(\z) \rvert^2 \,\rmd\s(\z) \leq \int_{\partial\BB^d} \norm{g}_\infty^2 \, \rmd\s(\z) \leq \int_{\partial\BB^d} \,\rmd\s = 1 
    \]
    so that \(w \in L^1(\rmd\s)\) with integral bounded above by 1. Since \(\int_{\partial\BB^d} \,\rmd\mu \geq \int_{\partial\BB^d} w \,\rmd\s\), \(w\) may therefore be the Radon--Nikodym derivative of a probability measure \(\mu\).
    
    Let \(f(z) = \frac{g(0)}{g(z)} \in H^{\infty}(\BB^d)\) and observe that \(f(0) = 1\), so that \(f\) is admissible for \eqref{eq:HInftyEntropy} with \(z = 0\). Moreover,
    \[
        \int_{\partial\BB^d} \lvert f(\z) \rvert^2 w(\z) \, \rmd\s(\z) = \lvert g(0) \rvert^2 \int_{\partial\BB^d} \frac{1}{\lvert g(\z) \rvert^2} w(\z) \, \rmd\s(\z) = \lvert g(0) \rvert^2 \int_{\partial\BB^d} \, \rmd\s(\z) = \lvert g(0) \rvert^2 = w(0),
    \]
    and since \(\frac1g \in H^{\infty}(\BB^d)\), we have that \(w(0) > 0\).
    
    Now, by \cite[Proposition 1.5.4]{Rud08}, \(\log w(\cdot) = \log \lvert g(\cdot) \rvert^2\) is subharmonic inside \(\BB^d\); by \cite[Definition 1.5.3(1)]{Rud08} with \(a = 0\) and \(0 < r < 1\) we thence obtain
    \[
        \log w(0) \leq \int_{\partial \BB^d} \log(w(r\z)) \, \rmd\s(\z).
    \]
    (Recall that the mean value property of harmonic functions provides \emph{equality} here when \(d = 1\).) By the maximum modulus principle on the ball \cite[Theorem 10.24]{Rud87}, \(\lvert g(r\z) \rvert \leq \lvert g(\z) \rvert\) for all \(0 < r < 1\) and \(\z \in \partial\BB^d\); the same is true for \(\log w\) since \(t \mapsto t^2\) and \(t \mapsto \log(t)\) are increasing functions on the positive reals. By the dominated convergence theorem \cite[Theorem 1.34]{Rud87} we may take the limit as \(r \to 1\) to see that
    \[
        \log w(0) \leq \int_{\partial \BB^d} \log(w(\z)) \, \rmd\s(\z).
    \]

    By exponentiating, then, we have found an \(f \in H^{\infty}(\BB^d)\) such that \(f(0) = 1\) and
    \[
        \int_{\partial\BB^d} \lvert f(\z) \rvert^2 w(\z) \, \rmd\s(\z) = w(0) \leq \exp \left(\int_{\partial \BB^d} \log(w(\z)) \, \rmd\s(\z)\right),
    \]
    so invoking \Cref{thm:MainResult}, (v) = (x) completes the argument.
\end{proof}

In fact, this argument can be slightly adjusted to allow \(g \in H^p(\BB^d)\) for any \(p \in [1,\infty]\).

\begin{theorem}
    \label{thm:examplesp}
    Fix \(p\in [1,\infty)\) and let \(g \in H^{p}(\BB^d)\) be such that \(\norm{g}_{p} \leq 1\) and \(1/g \in H^{\infty}(\BB^d)\). For \(z \in \BB^d\) let \(w(z) = \lvert g(z) \rvert^p\). Let \(\mu_\rms\) be a discrete measure such that \(\rmd\mu(\z) := w(\z) \rmd\s(\z) + \rmd\mu_\rms(\z)\) is a probability measure and suppose that $(\gamma_{\a,\b})_{\a,\b\in \NN_0^d}$ are the Verblunsky coefficients corresponding to $\mu$. Then
    \[
        \prod_{\a \in \NN_0^d} (1 - \lvert \g_{0,\a}\rvert^2) = \exp\left(\int_{\partial \BB^d} \log(w(\z)) \, \rmd \s(\z) \right).
    \]
\end{theorem}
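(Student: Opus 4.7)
The plan is to mimic the construction of \Cref{thm:examplesinfty}, adapted to the $H^p$ setting. Since $1/g \in H^{\infty}(\BB^d)$, $g$ is non-vanishing on the simply connected domain $\BB^d$, so I can define $g^{p/2} := \exp\!\bigl((p/2) \log g\bigr)$ as a holomorphic function on $\BB^d$ via a single-valued branch of $\log g$, with $\lvert g^{p/2} \rvert = \lvert g \rvert^{p/2}$ and $\lvert 1/g^{p/2} \rvert \leq \norm{1/g}_{\infty}^{p/2}$. Setting $f(z) := g(0)^{p/2}/g(z)^{p/2} \in H^{\infty}(\BB^d)$ then gives $f(0) = 1$, and using $K$-limit boundary values for $g$ (which exist $\rmd\s$-a.e.\ by Kor\'anyi's Fatou theorem),
\[
    \int_{\partial\BB^d} \lvert f(\z) \rvert^2 w(\z) \, \rmd\s(\z) = \lvert g(0) \rvert^p = w(0).
\]

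It therefore suffices to verify $w(0) \leq \exp\!\left(\int_{\partial\BB^d} \log w(\z) \, \rmd\s(\z)\right)$ and then apply \Cref{cor:MultivariateSzego}. Taking logarithms and dividing by $p$, this becomes the Jensen-type inequality $\log \lvert g(0) \rvert \leq \int_{\partial \BB^d} \log \lvert g(\z) \rvert \, \rmd\s(\z)$. In \Cref{thm:examplesinfty}, the analogous inequality was proved by combining subharmonicity of $\log w$ with the pointwise bound $\log \lvert g \rvert \leq 0$ supplied by $\norm{g}_{\infty} \leq 1$, the latter dominating the integrand for the dominated convergence passage to the unit sphere. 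Since our $g$ is only in $H^p(\BB^d)$ and may be unbounded on $\BB^d$, this pointwise bound is unavailable, and this is the main obstacle.

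I resolve this via a linearisation in the exponent. For each $\alpha \in (0, p/2]$, $g^{2\alpha}$ is holomorphic on $\BB^d$ with $\lvert g \rvert^{2\alpha} \leq 1 + \lvert g \rvert^p \in L^1(\partial \BB^d, \rmd\s)$, so $g^{2\alpha} \in H^1(\BB^d)$; combining subharmonicity of $\lvert g \rvert^{2\alpha}$ with the monotone $r \to 1$ limit for $H^{2\alpha}$-means gives
\[
    \lvert g(0) \rvert^{2\alpha} \leq \int_{\partial\BB^d} \lvert g(\z) \rvert^{2\alpha} \, \rmd\s(\z).
\]
Subtracting $1$, dividing by $\alpha$, and letting $\alpha \to 0^+$, both sides converge pointwise to $2 \log \lvert g(0) \rvert$ and $2 \int \log \lvert g(\z) \rvert \, \rmd\s(\z)$ respectively, using the elementary identity $\lim_{\alpha \to 0^+}(x^{2\alpha} - 1)/\alpha = 2 \log x$. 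To pass the limit through the integral, I observe that the integrand $(\lvert g \rvert^{2\alpha} - 1)/\alpha$ is monotonically non-increasing as $\alpha \searrow 0$ and is dominated above by $2(\lvert g \rvert^p - 1)/p \in L^1(\partial\BB^d)$ (the value at $\alpha = p/2$) and below by its pointwise limit $2 \log \lvert g \rvert \geq 2 \log(1/\norm{1/g}_{\infty})$, the latter holding $\rmd\s$-a.e.\ on $\partial\BB^d$ because $\lvert g(\z) \rvert \geq 1/\norm{1/g}_{\infty}$ a.e. Dominated convergence then delivers the Jensen-type inequality, which via \Cref{cor:MultivariateSzego} completes the proof.
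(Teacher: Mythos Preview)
Your proof is correct and shares the paper's overall architecture: the same $f(z)=(g(0)/g(z))^{p/2}$, the same computation $\int_{\partial\BB^d}|f|^2 w\,\rmd\s=w(0)$, and the same reduction to the Jensen-type inequality $\log|g(0)|\leq\int_{\partial\BB^d}\log|g(\z)|\,\rmd\s(\z)$ before invoking \Cref{cor:MultivariateSzego}. The divergence is in how that inequality is established. The paper copies the \Cref{thm:examplesinfty} argument verbatim: subharmonicity gives $\log w(0)\leq\int\log w(r\z)\,\rmd\s$, and the passage $r\to1$ is justified via the claim that the maximum modulus principle yields $|g(r\z)|\leq|g(\z)|$ pointwise. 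You correctly flag that for $g\in H^p\setminus H^\infty$ this pointwise domination is unavailable; in fact maximum modulus does not furnish such a radial inequality even for bounded $g$ (take $g(z)=2+z_1$ and $\z=(-1,0,\ldots,0)$, so that $|g(r\z)|=2-r>1=|g(\z)|$ for all $r<1$). Your linearisation device --- the mean-value inequality $|g(0)|^{2\a}\leq\int_{\partial\BB^d}|g|^{2\a}\,\rmd\s$ for $\a\in(0,p/2]$, differentiated at $\a=0^+$ using the two-sided $L^1$ sandwich $2\log(1/\|1/g\|_\infty)\leq(|g|^{2\a}-1)/\a\leq 2(|g|^p-1)/p$ --- is a clean and rigorous alternative that, as a bonus, makes explicit use of the lower bound $|g|\geq 1/\|1/g\|_\infty$ supplied by the hypothesis $1/g\in H^\infty(\BB^d)$. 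The paper's route is shorter when the pointwise domination is genuinely available, as in the $H^\infty$ case of \Cref{thm:examplesinfty}.
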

\begin{proof}
    Once again extend \(g\) and hence \(w\) to \(\partial \BB^d\) \(\rmd\s\)-almost everywhere by \(K\)-limits. First,
    \[
        \int_{\partial\BB^d} w(\z) \, \rmd\s(\z) = \int_{\partial \BB^d} \lvert g(\z) \rvert^p \,\rmd\s(\z) = \norm{g}_p^p \leq 1
    \]
    so as before \(w\) is a valid Radon--Nikodym derivative of a probability measure.
    
    Let \(f(z) = \left(\frac{g(0)}{g(z)}\right)^{\frac{p}{2}}\). Since \(1/g \in H^{\infty}(\BB^d)\), so too is \(1/g^{\frac{p}{2}}\) and hence \(f\). Again \(f(0) = 1\) so \(f\) is admissible for \eqref{eq:HInftyEntropy} with \(z = 0\), and we have that
    \[
        \int_{\partial\BB^d} \lvert f(\z) \rvert^2 w(\z) \, \rmd\s(\z) = \lvert g(0) \rvert^p \int_{\partial\BB^d} \frac{1}{\lvert g(\z) \rvert^p} w(\z) \, \rmd\s(\z) = \lvert g(0) \rvert^p \int_{\partial\BB^d} \, \rmd\s(\z) = \lvert g(0) \rvert^p = w(0).
    \]
    Once more appealing to \cite[Proposition 1.5.4]{Rud08}, the function \(\log w(\cdot) = \log \lvert g(\cdot) \rvert^p\) is subharmonic in \(\BB^d\) and so as above we may see that
    \[
        \log w(0) \leq \int_{\partial \BB^d} \log(w(r\z)) \, \rmd\s(\z).
    \]
    By the maximum modulus principle on the ball \cite[Theorem 10.24]{Rud87} we again have \(\lvert g(r\z) \rvert \leq \lvert g(\z) \rvert\) for all \(0 < r < 1\) and \(\z \in \partial\BB^d\). As \(p \geq 1\), \(t \mapsto t^p\) and \(t \mapsto \log(t)\) are increasing functions on the positive reals, and hence so too is \(\log w\). The dominated convergence theorem \cite[Theorem 1.34]{Rud87} once more allows us to take the limit as \(r \to 1\) to see that
    \[
        \log w(0) \leq \int_{\partial \BB^d} \log(w(\z)) \, \rmd\s(\z).
    \]

    We therefore see that this choice of \(f\) has \(f \in H^{\infty}(\BB^d)\), \(f(0) = 1\) and
    \[
        \int_{\partial\BB^d} \lvert f(\z) \rvert^2 w(\z) \, \rmd\s(\z) = w(0) \leq \exp \left(\int_{\partial \BB^d} \log(w(\z)) \, \rmd\s(\z)\right),
    \]
    so with another invocation of \Cref{thm:MainResult}, (v) = (x) we are done.
\end{proof}

We next use an approximation argument to weaken \Cref{thm:examplesinfty}'s assumption that \(g\) is invertible in \(H^{\infty}(\BB^d)\), obtaining a broader class of examples as follows.

\begin{theorem}
    \label{thm:examplesinftyapprox}
    Suppose that \(g \in H^{\infty}(\BB^d)\) with \(\norm{g}_{\infty} \leq 1\) and \(\Re( g(z) ) \geq 0\) for \(z \in \BB^d\), and suppose that that there exists \(\e > 0\) such that \(w(\z) := \lvert g(\z) \rvert^2 \geq \e\) for \(\rmd \s\)-a.e. \(\z \in \partial\BB^d\). Let \(\mu_\rms\) be a discrete measure such that \(\rmd \mu(\z) = w(\z)\rmd\s + \rmd\mu_\rms(\z)\) is a probability measure on $\partial \BB^d$ and let $(\gamma_{\a,\b})_{\a,\b\in \NN_0^d}$ be the Verblunsky coefficients corresponding to $\mu$. Then
    \[
        \prod_{\a \in \NN_0^d} (1 - \lvert \g_{0,\a}\rvert^2) = \exp\left(\int_{\partial \BB^d} \log(w(\z)) \, \rmd \s(\z) \right) > 0.
    \]
\end{theorem}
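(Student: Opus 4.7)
The plan is to reduce to Theorem \ref{thm:examplesinfty} by approximating $g$ with functions $g_n$ that satisfy its hypotheses, and then passing to the limit inside the infimum formulation of the Christoffel function from Theorem \ref{thm:HInftyEntropy}, rather than attempting to produce a single $f \in H^{\infty}(\BB^d)$ that directly witnesses \eqref{eq:SVHypothesis}.

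For $n \geq 1$, I would set
\[
    g_n(z) := \frac{g(z) + 1/n}{1 + 1/n}.
\]
A direct calculation gives $g_n \in H^{\infty}(\BB^d)$ with $\|g_n\|_{\infty} \leq 1$, $\Re g_n(z) \geq 1/(n+1)$ for $z \in \BB^d$ (using $\Re g \geq 0$), and $|g_n - g| = |1-g|/(n+1) \leq 2/(n+1)$, so $g_n \to g$ uniformly on $\BB^d$ and, via $K$-limits, $\rmd\s$-a.e.\ on $\partial\BB^d$. In particular $|g_n| \geq 1/(n+1)$ on $\BB^d$, hence $1/g_n \in H^{\infty}(\BB^d)$, and $f_n(z) := g_n(0)/g_n(z)$ lies in $H^{\infty}(\BB^d)$ with $f_n(0) = 1$. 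Combining Theorem \ref{thm:AbsolutelyContinuousDeterminesEntropy} with Theorem \ref{thm:HInftyEntropy} and evaluating at $f_n$,
\[
    \l_{\infty}^{(d)}(0; \rmd\mu) = \l_{\infty}^{(d)}(0; w \, \rmd\s) \leq \int_{\partial\BB^d} |f_n(\z)|^2 w(\z) \, \rmd\s(\z) = |g_n(0)|^2 \int_{\partial\BB^d} \frac{|g(\z)|^2}{|g_n(\z)|^2} \, \rmd\s(\z).
\]
The hypothesis $|g| \geq \sqrt{\e}$ $\rmd\s$-a.e.\ on $\partial\BB^d$, together with uniform convergence of $g_n$, gives $|g_n| \geq \sqrt{\e}/2$ $\rmd\s$-a.e.\ for $n$ large, so $|g/g_n|^2 \leq 4/\e$ and $|g/g_n| \to 1$ $\rmd\s$-a.e. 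Dominated convergence then yields $\l_{\infty}^{(d)}(0; \rmd\mu) \leq w(0)$.

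To bridge to the Szeg\H{o} entropy, I would observe that $\Re g > 0$ on $\BB^d$ (by the open mapping theorem applied to a non-constant $g$; the constant case is immediate), so $g$ is nowhere zero and $\log|g|^2$ is subharmonic on $\BB^d$. The spherical mean-value inequality \cite[Definition 1.5.3(1)]{Rud08} gives $\log w(0) \leq \int_{\partial\BB^d} \log|g(r\z)|^2 \, \rmd\s(\z)$ for every $0 < r < 1$. By Kor\'anyi's theorem $|g(r\z)| \to |g(\z)|$ $\rmd\s$-a.e., and since $\|g\|_\infty \leq 1$ the functions $-\log|g(r\z)|^2$ are non-negative, so Fatou's lemma applied to them gives $\limsup_{r\to 1} \int \log|g(r\z)|^2 \, \rmd\s \leq \int \log w \, \rmd\s$; combined with the mean-value inequality, $\log w(0) \leq \int \log w \, \rmd\s$, i.e.\ $w(0) \leq \exp\bigl(\int \log w \, \rmd\s\bigr)$. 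Together with the previous paragraph and the universal lower bound from Theorem \ref{thm:BoundedAbove},
\[
    \exp\Bigl(\int_{\partial\BB^d} \log w \, \rmd\s\Bigr) \leq \l_{\infty}^{(d)}(0;\rmd\mu) \leq w(0) \leq \exp\Bigl(\int_{\partial\BB^d} \log w \, \rmd\s\Bigr),
\]
so all three quantities agree, and Theorem \ref{thm:MainResult}, (v) = (viii) delivers the desired product identity. Strict positivity is immediate from $\log w \geq \log \e > -\infty$.

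The main obstacle is the dominated convergence step: the uniform lower bound $|g_n| \geq \sqrt{\e}/2$ on $\partial\BB^d$ depends essentially on the a.e.\ lower bound $|g| \geq \sqrt{\e}$, which is precisely the additional hypothesis distinguishing this result from Theorem \ref{thm:examplesinfty}. Without it, boundary zeros of $g$ could let $|g/g_n|$ blow up and the approach would fail. The half-plane condition $\Re g \geq 0$ plays two complementary roles: it forces $\Re g_n \geq 1/(n+1)$ so that $1/g_n \in H^{\infty}(\BB^d)$ and each $f_n$ is admissible, and it prevents $g$ from vanishing in the interior of $\BB^d$, so that $\log|g|^2$ remains subharmonic and the mean-value step goes through.
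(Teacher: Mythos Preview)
Your argument is correct, and it differs in organization from the paper's proof. The paper sets $g_n = g + 1/n$ (unnormalized), observes $w_n := |g_n|^2 > w$ so that $\l_\infty^{(d)}(0; w_n \, \rmd\s) \geq \l_\infty^{(d)}(0; w \, \rmd\s)$, invokes \Cref{thm:examplesinfty} as a black box for each $w_n$ to get $\l_\infty^{(d)}(0; w_n \, \rmd\s) \leq \exp\bigl(\int \log w_n \, \rmd\s\bigr)$, and then passes to the limit in $\int \log w_n \, \rmd\s$ via dominated convergence using the two-sided bound $\log \e \leq \log w_n \leq \|g\|_\infty + 1$. You instead plug the test functions $f_n = g_n(0)/g_n$ directly into the $H^\infty$-infimum for $\l_\infty^{(d)}(0; w \, \rmd\s)$, use dominated convergence on $|g/g_n|^2$ (where the lower bound $w \geq \e$ furnishes the dominating function) to reach $\l_\infty^{(d)}(0;\rmd\mu) \leq w(0)$, and then carry out the subharmonicity step directly for $g$, invoking the open mapping theorem to guarantee $g \neq 0$ on $\BB^d$ and Fatou (rather than the paper's maximum-modulus-plus-DCT) for the boundary limit. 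Your route is slightly more self-contained: it sidesteps the cosmetic issue that the paper's $g_n$ has $\|g_n\|_\infty > 1$ and so does not literally satisfy the hypotheses of \Cref{thm:examplesinfty}, and it isolates cleanly the two distinct uses of $\Re g \geq 0$ (invertibility of the approximants versus non-vanishing of $g$ in the interior). The paper's route is more modular, reusing \Cref{thm:examplesinfty} wholesale and applying DCT only once.
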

\begin{proof}
    We have that \(w\) is a valid Radon--Nikodym derivative for a probability measure by the same argument as for \Cref{thm:examplesinfty}. For \(n \in \NN\), let \(g_n(z) = g(z) + \frac1n\) and \(w_n(\z) = \lvert g_n(\z) \rvert^2\).
    
    Firstly, \(w_n(\z) > w(\z)\) for all \(n \in \NN\) and \(\z \in \partial\BB^d\), so for any \(p \in \CC[z_1, \ldots, z_d]\) with \(p(z) = 1\) and \(n \in \NN\) we have that
    \[
        \int_{\partial\BB^d} \lvert p(\z) \rvert^2 w_n(\z) \, \rmd\s(\z) \geq \int_{\partial\BB^d} \lvert p(\z) \rvert^2 w(\z) \, \rmd\s(\z)
    \]
    and therefore, by taking an infimum over all such \(p\),
    \[
        \l^{(d)}_{\infty}(z; w_n \rmd\s) \geq \l^{(d)}_{\infty}(z; w \rmd\s).
    \]
    Take a limit over \(n\) to see that
    \[
        \lim_{n\to\infty} \l^{(d)}_{\infty}(z; w_n \rmd\s) \geq \l^{(d)}_{\infty}(z; w \rmd\s)
    \]
    and observe that by \Cref{thm:AbsolutelyContinuousDeterminesEntropy} the right-hand side is equal to \(\l^{(d)}_{\infty}(z; \rmd\mu)\).

    On the other hand, since \(\Re( g(z) ) \geq 0\) for all \(z \in \BB^d\), we have that \(\Re( g_n(z) ) = \Re(g(z)) + \frac1n > 0\) for all \(z\in\BB^d\), so that \(g_n\) is nonzero on \(\BB^d\) and thus invertible in \(H^{\infty}(\BB^d)\), that is, for each \(n = 1, 2, \ldots\), we have that \(g_n\) satisfies the hypotheses of \Cref{thm:examplesinfty} and we conclude that
    \[
        \l_{\infty}^{(d)}(0; w_n \, \rmd\s) \leq \exp\left(\int_{\partial \BB^d} \log(w_n(\z)) \, \rmd \s(\z) \right).
    \]

    Since $w(\zeta) > 0$ ${\rm d}\sigma$-a.e., we have that $(\log(w_n))_{n=1}^{\infty}$ converges pointwise to $\log(w)$ and 
    \[
        \log(w_n) \leq w_n = \lvert g + 1/n \rvert^2 \leq  |g|^2 + 1 \leq \| g \|_{\infty} + 1
    \]
    on $\partial \BB^d$ ${\rm d}\sigma$-a.e. On the other hand, \(w(\z) \geq \e\) implies that
    \[
        \log(w_n) \geq \log\left(\e + \frac{1}{n}\right) \geq \log(\e) \quad \quad \text{${\rm d}\sigma$-a.e.};
    \]
    certainly the function \(\max\{\lvert \log(\e) \rvert, \norm{g}_{\infty} + 1\}\) is integrable on \(\partial\BB^d\). We may thus apply a well-known and straight-forward relaxation of the dominated convergence theorem \cite[Theorem 1.34]{Rud87} to obtain $\log w \in L^1(\partial \BB^d)$ and
    \[
        \lim_{n\to\infty} \int_{\partial\BB^d} \log(w_n(\z)) \rmd\s(\z) = \int_{\partial\BB^d} \log(w(\z)) \rmd\s(\z).
    \]

    Combining these relationships and using continuity of \(\exp(\cdot)\), we see that
    \[
        \l^{(d)}_{\infty}(z; \rmd \mu) \leq \lim_{n\to\infty}\exp\left(\int_{\partial \BB^d} \log(w_n(\z)) \, \rmd \s(\z) \right) = \exp\left(\int_{\partial \BB^d} \log(w(\z)) \, \rmd \s(\z) \right) > 0.
    \]
    The converse inequality was shown in \Cref{thm:BoundedAbove} and so the result follows via \Cref{viii=v}.
\end{proof}

\begin{remark}[Amplification of Theorem \ref{thm:examplesinfty} and Theorem \ref{thm:examplesinftyapprox}]
    \label{rem:FurtherExamples}
    One can quickly obtain additional related classes from the above results. Firstly, one could repeat the proof of \Cref{thm:examplesinftyapprox} with the assumption that \(\Im(g(z)) \geq 0 \) in place of the assumption that \(\Re(g(z)) \geq 0\), using instead \(g_n(z) = g(z) + \frac{i}{n}\), to obtain a symmetric class of examples. 

    Secondly, as we are interested primarily in \(w(\z) = \lvert g(\z) \rvert^2\), we could replace \(g\) by \(\overline{g}\) where \(g\) satisfies the assumptions of any of the three preceding results and essentially the same argument would allows us to again conclude the Szeg\H{o}--Verblunsky theorem.

    Finally, note that with \Cref{thm:BoundedAbove} in hand, it is sufficient for the preceding results of this section to prove the inequality \(\l_\infty^{(d)}(z; \rmd\mu) \leq \exp(\int_{\partial \BB^d} \log(w(\z)) \, \rmd \s(\z))\). As such, we could replace \(g\) by any \(h\) such that there exists \(g\) satisfying the hypotheses of the above results and satisfying \(\lvert h \rvert < \lvert g \rvert\) on \(\partial\BB^d\), and then once more the same arguments allow us to conclude the Szeg\H{o}--Verblunsky theorem for a non-trivial probability measure \(\rmd\mu(\z) = \lvert h(\z) \rvert^2 \rmd\s(\z) + \rmd\mu_\rms(\z)\).
\end{remark}

\begin{remark} 
    For a concrete example, any \emph{stable polynomial} --- that is, any \(q \in \CC[z_1, \ldots, z_d]\) such that \(\lvert q(z) \rvert > 0\) for all \(z \in \overline{\BB}^d\) --- with \(\norm{q}_{p} < 1\) for some \(p \in [1,\infty]\) gives rise to a \(w = |q|^p \in L^1(\partial\BB^d)\) such that the probability measure \(\rmd\mu(\z) = w(\z)\rmd\s(\z) + \rmd\mu_\rms(\z)\) with \(\mu_\rms\) discrete. Then the Verblunsky coefficients of $\mu$, say $(\gamma_{\a,\b})_{\a,\b \in \NN_0^d}$, will obey the formula
        $$\prod_{\a \in \NN_0^d} (1 - \lvert \g_{0,\a}\rvert^2) = \exp\left(\int_{\partial \BB^d} \log(w(\z)) \, \rmd \s(\z) \right).$$
\end{remark}

Our final result is a non-example showing that --- in stark contrast to the univariate and known polytorus settings --- the Szeg\H{o}--Verblunsky theorem does not always hold on \(\partial\BB^d\), i.e., the Szeg\H{o} entropy of a measure on \(\partial\BB^d\) does not always coincide with the product \(\prod_{\a \in \NN_0^d} (1 - \lvert \g_{0,\a} \rvert^2)\).

\begin{theorem}
    \label{thm:counterexample}
    There exists a measure \(\rmd\mu(\z) = \tilde{w}(\z) \rmd\s(\z)\) on \(\partial\BB^2\) such that
    \[
        \prod_{\a \in \NN_0^d} (1 - \lvert \g_{0,\a}\rvert^2) \neq \exp\left(\int_{\partial \BB^d} \log(\tilde{w}(\z)) \, \rmd \s(\z) \right).
    \]
\end{theorem}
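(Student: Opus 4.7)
The plan is to take $\tilde{w}(\zeta) = 2|\zeta_1|^2$ on $\partial\BB^2$---a weight of the form $2|g|^2$ with $g(z) = z_1$, which fails to satisfy the hypothesis $1/g \in H^\infty(\BB^2)$ required by the example classes of Section~\ref{sec:ExampleClasses}---and to directly compute both sides of the Szeg\H{o}--Verblunsky identity. First I would verify that $\rmd\mu(\zeta) := 2|\zeta_1|^2\,\rmd\sigma(\zeta)$ is a non-trivial probability measure. The standard moment formula $\int_{\partial\BB^d}|\zeta^\alpha|^2\,\rmd\sigma = \frac{(d-1)!\,\alpha!}{(d-1+|\alpha|)!}$ gives $\int_{\partial\BB^2}|\zeta_1|^2\,\rmd\sigma = 1/2$, so the total mass is $1$; and since $\int_{\partial\BB^2}\zeta^{\alpha+e_1}\overline{\zeta}^{\beta+e_1}\,\rmd\sigma$ vanishes for $\alpha \neq \beta$ by rotation-invariance, the moment kernel is diagonal with strictly positive entries and hence all of its truncations are invertible.

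Next I would compute the Szeg\H{o} entropy side using the parametrisation $(\zeta_1,\zeta_2) = (re^{i\alpha}, \sqrt{1-r^2}\,e^{i\beta})$ for $r \in [0,1]$ and $\alpha,\beta \in [0,2\pi]$, under which $\rmd\sigma = 2r\,\rmd r \cdot \tfrac{\rmd\alpha}{2\pi}\cdot \tfrac{\rmd\beta}{2\pi}$; this yields
$$
  \int_{\partial\BB^2}\log|\zeta_1|^2\,\rmd\sigma = \int_0^1 4r \log r \, \rmd r = -1,
$$
whence $\exp\bigl(\int_{\partial\BB^2}\log \tilde w\,\rmd\sigma\bigr) = 2e^{-1}$. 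For the other side, by the equality (v) = (viii) of \Cref{thm:MainResult}, it suffices to compute $\lambda_\infty^{(2)}(0;\rmd\mu)$, and by \Cref{lem:CFExists} this equals the infimum of $\int_{\partial\BB^2}|q|^2 \tilde w\,\rmd\sigma$ over polynomials $q \in \CC[z_1,z_2]$ with $q(0) = 1$.

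The key observation is that for any such $q$, the polynomial $h := z_1 q \in \CC[z_1,z_2]$ has Taylor coefficients $c_\beta = 0$ whenever $\beta_1 = 0$, and $c_{(1,0)} = q(0) = 1$. Using pairwise orthogonality of distinct monomials in $H^2(\BB^2)$, together with $\|z_1\|_{H^2(\BB^2)}^2 = 1/2$, we obtain
$$
  \int_{\partial\BB^2}|q|^2 \tilde w\,\rmd\sigma = 2\int_{\partial\BB^2}|h|^2\,\rmd\sigma = 2\sum_{\beta:\beta_1 \geq 1} |c_\beta|^2 \|z^\beta\|_{H^2(\BB^2)}^2 \geq 2|c_{(1,0)}|^2\|z_1\|_{H^2(\BB^2)}^2 = 1,
$$
with equality attained by $q \equiv 1$. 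Therefore $\lambda_\infty^{(2)}(0;\rmd\mu) = 1 \neq 2e^{-1}$, and combining this with \Cref{thm:MainResult}, (v) = (viii), provides the claimed counterexample. The argument is a short sequence of explicit computations; the only mildly subtle point is the substitution $h = z_1 q$, which converts the weighted minimisation on $L^2(\tilde w\,\rmd\sigma)$ into an unweighted minimisation over the subspace of $H^2(\BB^2)$ of functions with prescribed $z_1$-coefficient and vanishing coefficients for monomials not divisible by $z_1$.
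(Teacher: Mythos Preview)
Your proof is correct and uses the same counterexample $\tilde w(\zeta)=2|\zeta_1|^2$ as the paper, but the two arguments diverge in how the left-hand side is evaluated. The paper shows directly that all Verblunsky coefficients of $\mu$ vanish: from the diagonality of the moment kernel it reads off $D_\alpha/D_{\mathrm{prec}(\alpha)}=K(\alpha,\alpha)$ and then uses the equality (iii)=(iv) of \Cref{thm:MainResult} to force $\prod_{\beta\prec\alpha}(1-|\gamma_{\beta,\alpha}|^2)=1$, hence $\gamma_{\beta,\alpha}=0$ for all $\beta\prec\alpha$. You instead bypass the Verblunsky coefficients entirely and compute $\lambda_\infty^{(2)}(0;\rmd\mu)$ via the infimum formula, using the substitution $h=z_1q$ to reduce to a monomial-orthogonality estimate in $L^2(\sigma)$; the equality (v)=(viii) then transfers this to the product. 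Your route is arguably more self-contained (it needs only \Cref{lem:CFExists} and \Cref{viii=v}), while the paper's route yields the stronger conclusion that \emph{every} Verblunsky coefficient of this measure is zero. The entropy computations also differ cosmetically: the paper expands $\log(1-|\zeta_2|^2)$ as a power series and integrates term by term via Rudin's moment formula, whereas you parametrise $\partial\BB^2$ explicitly; both give $-1$.
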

\begin{proof}
    We consider a particular example that allows us to explicitly compute these quantities. Let \(d = 2\) (though the example generalises to \(d > 2\) in a natural fashion) and \(w(\z) = \lvert \z_1 \rvert^2 = 1 - \lvert \z_2 \rvert^2\) for \(\z \in \partial\BB^2\); we can integrate \(w\) to get
    \[
        \int_{\partial\BB^2} w(\z) \,\rmd\s(\z) = \int_{\partial\BB^2} \,\rmd\s - \int_{\partial\BB^2} \lvert \z_1 \rvert^2 \, \rmd\s(\z) = 1 - \frac{(2-1)!1!}{(2 - 1 + 1)!} = \frac{1}{2},
    \]
    by \cite[Proposition 1.4.9]{Rud08} with \(\a = (1, 0)\). Thus \(\rmd \mu(\z) = \tilde{w}(\z)\rmd\s(\z) :=  2 w(\z)\rmd\s(\z)\) is a probability measure on \(\partial\BB^2\) with discrete (indeed, zero) singular part. We may compute the moments of this measure: for \(\a,\b \in \NN_0^2\),
    \begin{align*}
        K(\a, \b) & = \int_{\partial\BB^2} \z^\a \overline{\z}^\b \,\rmd\mu(\z) = 2 \int_{\partial\BB^2} \z_1^{\a_1 + 1} \z_2^{\a_2} \overline{\z_1}^{\b_1 + 1} \overline{\z_2}^{\b_2} \, \rmd\s(\z);
    \end{align*}
    by \cite[Proposition 1.4.8]{Rud08}, this is zero when \((\a_1 + 1, \a_2) \neq (\b_1 + 1, \b_2)\), i.e. when \(\a \neq \b\). (One may also use \cite[Proposition 1.4.9]{Rud08} to compute a combinatorial expression for \(K(\a,\a)\) explicitly, but this shall turn out to be irrelevant.)

    As \(K(\a,\b) = 0\) if \(\a \neq \b\), \(K\) is diagonal. It follows that the moment matrix determinants given by \(D_\a = \det[K(\a',\b')]_{\a',\b' \preceq \a}\) are simply the products of the diagonal elements \(K(\b,\b)\) with \(\b\) ranging from \((0,0)\) to \(\a\):
    \[
        D_\a = \prod_{0 \preceq \b \preceq \a} K(\b, \b),
    \]
    and it follows that the ratios of these determinants are given by
    \[
        \frac{D_\a}{D_{\mathrm{prec}({\a})}} = K(\a,\a).
    \]
    
    Next, by \Cref{thm:MainResult}, (iii) = (iv), we have for \(\a \in \NN_0^2\) that
    \[
        \frac{D_{\a}}{D_{\mathrm{prec}(\a)}} = K(\a, \a) \cdot \prod_{0 \preceq \b \preceq \mathrm{prec}(\a)} (1 - \lvert \g_{\b, \a} \rvert^2),
    \]
    that is, for all \(\a \in \NN_0^2\), we have that
    \[
        K(\a,\a) = K(\a,\a) \cdot \prod_{0 \preceq \b \preceq \mathrm{prec}(\a)} (1 - \lvert \g_{\b, \a} \rvert^2)
    \]
    and hence
    \[
        \prod_{0 \preceq \b \preceq \mathrm{prec}(\a)} (1 - \lvert \g_{\b, \a} \rvert^2) = 1.
    \]
    As a finite product of real numbers satisfying \(0 \leq 1 - \lvert \g_{\b,\a} \rvert^2 \leq 1\), this product may only equal 1 when \(\g_{\b,\a} = 0\) for all \(0 \preceq \b \preceq \mathrm{prec}(\a)\). In particular, then, we have that \(\g_{0,\a} = 0\), and as \(\a \in \NN_0^d\) was arbitrary in the above argument, this holds for all \(\a \in \NN_0^2\).

    Thus item (v) of \Cref{thm:MainResult} becomes
    \[
        \prod_{\b = 0}^{\a(n)} (1 - \lvert \g_{0,\b}\rvert^2) = 1
    \]
    for any \(n \in \NN\), and hence by taking the limit we get
    \[
        \prod_{\a \in \NN_0^2} (1 - \lvert \g_{0,\a} \rvert^2) = 1.
    \]

    On the other hand, the Szeg\H{o} entropy of this measure is also computable, as follows. First, we remark that the ``equator'' of \(\partial\BB^2\) given by \(E = \{(0, \z_2) : \lvert \z_1 \rvert^2 = 1\}\) is of \(\rmd\s\)-measure zero, so
    \[
        \int_{\partial\BB^2} \log w(\z) \,\rmd\s(\z) = \int_{\partial\BB^2\setminus E} \log w(\z) \, \rmd\s(\z).
    \]
    Now, \(\lvert \z_2 \rvert^2 \neq 1\) on \(\partial\BB^2 \setminus E\), so we may use the Taylor series for \(\log(1 - x)\) to see that
    \[
        \log w(\z) = \log(1 - \lvert \z_2 \rvert^2) = - \sum_{n=1}^{\infty} \frac{1}{n} \lvert \z_2 \rvert^{2n}
    \]
    and hence (using compactness of \(\partial\BB^2\) to exchange the sum and integral and again using that \(E\) is a \(\rmd\s\)-null set)
    \begin{align*}
        \int_{\partial\BB^2} \log w(\z) \, \rmd\s(\z) & = \int_{\partial\BB^2 \setminus E} - \sum_{n=1}^{\infty} \frac{1}{n} \lvert \z_2 \rvert^{2n} \, \rmd\s(\z) \\
        & = - \sum_{n=1}^{\infty} \frac{1}{n} \int_{\partial\BB^2 \setminus E} \lvert \z_2 \rvert^{2n} \, \rmd\s(\z) \\
        & = - \sum_{n=1}^{\infty} \frac{1}{n} \int_{\partial\BB^2} \lvert \z_2 \rvert^{2n} \, \rmd\s(\z) \\
        & = - \sum_{n=1}^{\infty} \frac{1}{n} \int_{\partial\BB^2} \lvert \z^{(0,n)} \rvert^2 \, \rmd\s(\z) \\
        & = - \sum_{n=1}^{\infty} \frac{1}{n} \frac{(2-1)!0!n!}{(2 - 1 + 0 + n)!} \\
        & = - \sum_{n=1}^{\infty} \frac{1}{n} \frac{n!}{(n+1)!} \\
        & = - \sum_{n=1}^{\infty} \frac{1}{n(n+1)} \\
        & = - 1,
    \end{align*}
    again computing the integral via \cite[Proposition 1.4.9]{Rud08}.

    Recall that the Radon--Nikodym derivative of \(\mu\) is by definition \(\tilde{w} = 2w\). The Szeg\H{o} entropy of \(\mu\) is then
    \begin{align*}
        \exp\left(\int_{\partial \BB^2} \log(2w(\z)) \, \rmd \s(\z) \right) & = \exp\left(\int_{\partial\BB^2} \log w(\z) \,\rmd\s(\z) + \log(2)\right) \\
        & = 2\exp\left(\int_{\partial \BB^2} \log w(\z) \, \rmd \s(\z) \right) = 2\exp(-1) \\
        & = \frac2e.
    \end{align*}

    This does not coincide with 1, which we saw was the value of \(\prod_{\a \in \NN_0^2} (1 - \lvert \g_{0,\a} \rvert^2)\) and hence (as the singular part of \(\mu\) is discrete) items (v) - (ix) of \Cref{thm:MainResult}, that is, we have constructed a measure \(\rmd\mu(\z) = \tilde{w}(\z)\rmd\s(\z)\) on \(\partial\BB^2\) such that
    \[
        \prod_{\a \in \NN_0^d} (1 - \lvert \g_{0,\a}\rvert^2) \neq \exp\left(\int_{\partial \BB^2} \log(\tilde{w}(\z)) \, \rmd \s(\z) \right).
    \]
\end{proof}

\begin{remark}
    The function \(g(z_1, z_2) = \sqrt{2}z_1\) is a polynomial, and hence in \(H^p(\BB^2)\) for any \(p \in [1,\infty]\), and we have that the \(\tilde{w}\) of \Cref{thm:counterexample} is given by \(\tilde{w}(\z) = \lvert g(\z) \rvert^2\), in the same manner as in the earlier results of this section. However, \(g\) does not satisfy either the property \(\Re(g(z)) \geq 0\) on \(\BB^2\) nor that \(\Im(g(z)) \geq 0\) on \(\BB^2\) (nor the stronger property that \(1/g \in H^{\infty}(\BB^2)\)).
\end{remark}

\begin{remark}
    Certainly, the Szeg\H{o}--Verblunsky theorem shows that no such counterexample can exist when \(d = 1\). We also remark that, explicitly, the particular construction above breaks down when \(d=1\) as the function \(w(\z) = \lvert \z \rvert^2 \equiv 1\) is constant, so integrating \(w\) yields 1 and hence \(\tilde{w} = w\); then one may again show that
    \[
        \prod_{k \in \NN_0} (1 - \lvert \g_{0,k}\rvert^2) = 1,
    \]
    but this will now coincide with the Szeg\H{o} entropy
    \[
        \exp\left(\int_0^{2\pi} \log(\tilde{w}(e^{i\th})) \, \frac{\rmd\th}{2\pi} \right) = \exp(0) = 1.
    \]
\end{remark}

\bibliographystyle{plain}

\end{document}